\newtheorem{definition}{Definition}[section]
\newtheorem{thm}{Theorem}[section]
\newtheorem{Lemma}[thm]{Lemma}
\numberwithin{equation}{section}
\newtheorem{remark}[thm]{Remark}
\newcommand{\Bx}{\mathbf{x}}
\newcommand{\Bi}{\mathbf{i}}
\newcommand{\BF}{\mathbf{F}}
\newcommand{\Bu}{\mathbf{u}}
\def\Oh{{\mathcal  O}}
\author{}
\title[NL balance laws in product spaces]{Determining nonlinear balance laws in product-type domains by a single local passive boundary observation}
\author{Chaohua Duan}
\address{Department of Mathematics, City University of Hong Kong, Hong Kong SAR, China}
\email{chduan3-c@my.cityu.edu.hk, chduan3@gmail.com}
\author{Hongyu Liu}
\address{Department of Mathematics, City University of Hong Kong, Hong Kong SAR, China}
\email{hongyu.liuip@gmail.com, hongyliu@cityu.edu.hk}
\author{Qingle Meng}
\address{Department of Mathematics, City University of Hong Kong, Hong Kong SAR, China}
\email{mengq12021@foxmail.com, qinmeng@cityu.edu.hk}
\author{Li Wang}
\address{Department of Mathematics, City University of Hong Kong, Hong Kong SAR, China}
\email{lwang637-c@my.cityu.edu.hk, liwangmath12@126.com}
\begin{document}

\begin{abstract}  

This paper introduces an operator-theoretic paradigm for solving inverse problems in nonlinear balance laws, shifting the focus from identifying specific functional forms to recovering the input-output actions of the associated flux and source operators. It is established that a single local passive boundary observation suffices to uniquely determine realizations of these operators for systems posed on product-type domains. This framework, which encompasses dynamical regimes, reveals a holographic-type principle where macroscopic boundary data encodes microscopic dynamical information, with broad implications for fluid dynamics and reaction-diffusion systems.

       \medskip
    
    \noindent{\bf Keywords:} inverse problems, passive boundary observation, operator identification, balance laws, unique identifiability, product-type geometry.
    
  \noindent{\bf 2020 Mathematics Subject Classification:}~~35R30, 35R35, 47H30. 
    
\end{abstract}

\maketitle

\section{Introduction}\label{sec:intro}
\subsection{Mathematical setup}\label{sub:math}
Nonlinear balance laws provide a foundational framework in mathematical physics, describing the evolution and spatial distribution of conserved quantities in systems ranging from fluid dynamics and reaction-diffusion processes to quantum mechanics; see Section~\ref{subsec:RB-TI} in what follows for more related discussion. This class of problems is generally expressed as:
\begin{equation}\label{eq:bl}
\begin{cases}
    \partial_t H(\Bu) + \nabla_{\Bx} \cdot \BF(\Bx,t,\Bu) = f(\Bx,t,\Bu,\nabla \Bu,\Delta \Bu) &\  \text{in}  \ \ Q:=\Omega \times [0,T],\\
    \Bu(\Bx,t) = \boldsymbol{\psi}(\Bx,t) & \  \text{on} \ \ \widetilde{\Gamma}:=\partial \Omega \times [0,T],
\end{cases}
\end{equation}
where $\Omega$ is a bounded Lipschitz domain in $\mathbb{R}^n$, $T \in (0, \infty)$, the state variable $\Bu (\Bx, t): \mathbb{R}^n\times\mathbb{R}_+\to\mathbb{R}^m$ and the nonlinear functions are defined as $H: \mathbb{R}^m \to \mathbb{R}^m$, $\BF: \Omega \times [0, T] \times \mathbb{R}^m \to \mathbb{R}^{n \times m}$, and $f: \Omega \times [0, T] \times \mathbb{R}^m \times \mathbb{R}^{n \times m} \times \mathbb{R}^m \to \mathbb{R}^m$, with $m, n\geq 1$. The term $H(\Bu)$ represents the conserved quantity, a function of the state variables $\Bu$. Its partial derivative with respect to time, $\partial_t H(\Bu)$, describes this quantity's temporal rate of change. This temporal evolution is balanced by the term $\nabla_{\Bx} \cdot \BF(\Bx, t, \Bu)$, which represents the divergence of the flux. The flux $\BF$ governs the spatial flow of the conserved quantity within the system and depends on the spatial coordinate $x$, time $t$, and the system’s state $\Bu$. Additionally, $f(\Bx, t, \Bu, \nabla \Bu, \Delta \Bu)$ denotes a source or sink term, capturing external influences that may contribute to or diminish the conserved quantity. 

In this paper, we study the inverse problem for \eqref{eq:bl} of recovering the functions $\BF$ and $f$ from knowledge of the field $\Bu (\Bx, t)$ on the boundary segment $\Sigma:=\Gamma \times (T_1,T_2)\subset \widetilde{\Gamma}$ where $\Gamma\subset\partial\Omega$ and $0\leq T_1<T_2\leq  T$. Such a problem arises naturally when a physical process in $\Omega$ generates the state $\Bu$, and its boundary observation is recorded in the dataset:
\begin{equation*}
    \mathcal{M}_{\BF,f}|_{\Sigma} = \left( \left. \Bu \right|_{\Sigma} , \left.\partial_\nu \Bu(\Bx, t)\right|_{\Sigma} + h_{\BF}(\Bx,t) \right).
\end{equation*}
The term $h_{\BF}(\Bx,t)$ embodies a boundary coupling effect, capturing the additional flux at the boundary due to non-diffusive processes (e.g., convection and nonlinear transport) and is directly tied to the flux function $\BF$. It will be explicitly defined in our subsequent analysis. For a fixed physical state $\Bu$, the dataset $\mathcal{M}_{\BF, f}$ thus represents a single, local, passive boundary observation, and our central inverse problem is to determine $(\BF, f)$ from this data:
\begin{equation}\label{eq:ip1}
\mathcal{M}_{\BF, f}|_{\Sigma} \longrightarrow (\BF, f).
\end{equation}
This problem presents significant challenges and has seen very little progress in the literature; we provide a detailed discussion in Section~\ref{subsec:RB-TI}. A key novelty of our work lies in reframing the objective: instead of recovering the function forms of $\BF$ and $f$ themselves, we aim to recover the operators that describe their overall impact on the system's behavior. This shift to an operator-theoretic standpoint, set in suitable Sobolev spaces, constitutes a more general and entirely new framework for tackling inverse problems for partial differential equations (PDEs).
 
Throughout this paper, we fix $m=1$ and $n=2, 3$, restricting our study to the case of a scalar state $u$ in \eqref{eq:bl}. Nevertheless, as detailed in Table~\ref{table1}, this includes a large class of important PDEs in mathematical physics and applied mathematics. Per our discussion above, we define the following operators associated with $\BF$ and $f$:
\begin{align}
\mathscr{F}: H^1(\Omega\times[0,T]) & \rightarrow L^2(\Omega\times[0,T])^n, \notag \\
u & \mapsto \mathscr{F}(u)(\Bx,t):=\BF(\Bx,t, u), \quad (\Bx,t) \in Q, \notag \\
\mathfrak{f}: H^1(\Omega\times[0,T]) & \rightarrow L^2(\Omega\times[0,T]), \notag\\
u & \mapsto \mathfrak{f}(u)(\Bx,t):=f(\Bx,t, u,\nabla u,\Delta u), \quad (\Bx,t) \in Q,\label{eq:oper}
\end{align}
where $u(\Bx)$ is regarded as a sample in $H^1(\Omega\times[0,T])$. The operator $\mathscr{F}$ is vector-valued, $\mathscr{F} = (\mathscr{F}_k)_{k=1}^n$, and $\mathscr{F}(u)(\Bx,t):=\BF(\Bx,t, u)$ and $\mathfrak{f}(u)(\Bx,t):=f(\Bx,t, u,\nabla u,\Delta u)$ correspond to realizations of the operators $\mathscr{F}$ and $\mathfrak{f}$ in $L^2(\Omega\times[0,T])$, respectively.

With the operator-theoretic perspective, the inverse problem \eqref{eq:ip1} can be recast as follows:
\begin{equation}\label{eq:main_IP}
\mathcal{M}_{\mathscr{F},\mathfrak{f}} \big|_\Sigma \longrightarrow (\mathscr{F}, \mathfrak{f}).
\end{equation}
 In the inverse problems \eqref{eq:main_IP}, our primary concern is to tackle the theoretical challenge of achieving unique identifiability, which can be expressed as:
 \begin{equation*}
\mathcal{M}_{\mathscr{F}^1, \mathfrak{f}^1} = \mathcal{M}_{\mathscr{F}^2, \mathfrak{f}^2}  \Longleftrightarrow  \mathscr{F}^1(u^j) = \mathscr{F}^2(u^j)\, \text{and} \,\,\, \mathfrak{f}^1(u^j) = \mathfrak{f}^2(u^j),\ \ j=1,2, 
\end{equation*}
where $(\mathscr{F}^j, \mathfrak{f}^j)$ are two configurations  with corresponding solutions $u^j$ ($j = 1, 2$). It should be noted that the flow balance condition dictates that the equality $\mathcal{M}_{\mathscr{F}^1, \mathfrak{f}^1} = \mathcal{M}_{\mathscr{F}^2, \mathfrak{f}^2}$ requires $h_{\BF^1} - h_{\BF^2}$ to satisfy an associated flux balance condition. The specific form of this condition, inherent to equation \eqref{eq:bl}, is derived in Section \ref{sec:result}.

We emphasize that $\mathscr{F}^1(u^j) = \mathscr{F}^2(u^j)$ and $\mathfrak{f}^1(u^j) = \mathfrak{f}^2(u^j)$ for $j = 1, 2$ do not imply that $\mathscr{F}^1 = \mathscr{F}^2$ and $\mathfrak{f}^1 = \mathfrak{f}^2$. In the sense of the effects produced by the associated operators $\mathscr{F}^j$ and $\mathfrak{f}^j$ for the passive measurements, the corresponding operators can be regarded as determined uniquely. We focus on the effects and impacts of the operators on various passive observations rather than the explicit forms of the operators $\mathscr{F}$ and $\mathfrak{f}$ themselves. To determine the explicit forms of the operators $\mathscr{F}$ and $\mathfrak{f}$, one typically requires either a priori information (e.g., that $\BF$ and $f$ are polynomials) or the system's response to multiple inputs through active measurements.



\subsection{Geometric setup} \label{sub:geometric}
In this subsection, we introduce the specific geometric setup for our study and outline the main results. We assume that the domain \(\Omega\) is possesses a certain product-type feature; refer to Fig. \ref{fig:TNe} for a schematic illustration of these two types of domains. Such geometric feature is critical in establishing our global recovery results.

\begin{definition}[Nozzle domain]\label{def:ND}
A domain \(\mathscr{N}^{N}_{\varepsilon} \subset \mathbb{R}^n\) $(n=2,3)$ is called a \textit{nozzle domain} if it is described as follows:  
Let \(\Omega^{N}_{\varepsilon}\) be a bounded, simply connected Lipschitz domain in \(\mathbb{R}^{n-1}\) with \(\mathrm{diam}(\Omega^{N}_{\varepsilon}) \leq \varepsilon \ll 1\). Let \(\gamma \in C^2(I; \mathbb{R}^n)\) be a simple (injective) curve parametrized over the interval \(I = (-L, L)\) for some \(L > 0\).  
The nozzle domain is constructed by parallel transporting \(\Omega^{N}_{\varepsilon}\) along the curve \(\gamma\), forming the domain:
\[
\mathscr{N}^{N}_{\varepsilon} := \bigcup_{a \in I} \Omega^{N}_{\varepsilon}(a) \times \{\gamma(a)\}, 
\]
where \(\Omega^{N}_{\varepsilon}(a)\) represents the translated cross-section at the point \(a\), and there exists a unique \(a_0 \in I\) such that \(\gamma(a_0) \in \Omega^{N}_{\varepsilon}(a_0)\).
\end{definition}
\begin{definition}[Slab domain]\label{def:SD}
The \textit{slab domain} $\mathscr{N}^{S}_{\varepsilon} \subset \mathbb{R}^3$ is constructed analogously to the nozzle domain but with quadrilateral cross-sections:
\begin{equation*}
    \mathscr{N}^{S}_{\varepsilon} := \bigcup_{a \in I} \Omega^{S}_{\varepsilon}(a) \times \{\gamma(a)\} \subset \mathbb{R}^3,
\end{equation*}
where $\gamma:I\to\mathbb{R}^3$ is the same $C^2$ simple curve as in Definition~\ref{def:ND}, and each $\Omega^{S}_{\varepsilon}(a)$ is a translation of a fixed quadrilateral $\Omega^{S}_{\varepsilon}\subset\mathbb{R}^2$ characterized by sufficiently small width $\varepsilon\ll 1$ and conventional length $\Oh(1)$.
\end{definition}
\begin{remark}\label{Re:TN}
It is important to note that the nozzle domain features two sufficiently small dimensions, while the slab domain has a thickness of $\varepsilon$, significantly smaller than its length and width. Here, $\varepsilon$ as the geometric parameter characterizes these product-type domains and plays a crucial role in our subsequent analysis,  particularly in understanding how boundary measurements encode and recover information about the interior dynamics. Moreover, these two geometric structures are physically relevant in various applications, such as nanoscale materials and microfluidic channels, see \cite{ISKW,MTT}.
\end{remark}


To formalize the geometric framework of our study, we consider a domain $\Omega \subset \mathbb{R}^{n}$ which is defined as the union of multiple pairwise disjoint nozzle or slab domains:
\begin{equation}\label{eq:NNN}
\Omega = \bigcup_{\ell=1}^{M} \mathscr{N}_{\varepsilon}^{\ell},
\end{equation}
where each component $\mathscr{N}_{\varepsilon}^{\ell}$ represents either a nozzle domain or a slab domain, denoted as:
\begin{equation}
\label{eq:Ne}
\mathscr{N}_{\varepsilon}^{\ell} := \mathscr{N}^{N}_{\varepsilon} \text{ or } \mathscr{N}^{S}_{\varepsilon}.
\end{equation}

The corresponding lateral boundary is given by:
\begin{equation}\label{def:Gamma}
    \hat{\Gamma} := \partial\Omega_{\varepsilon} \times \gamma(I) \quad \text{for } \Omega_{\varepsilon} = \Omega^{N}_{\varepsilon} \text{ or } \Omega^{S}_{\varepsilon},
\end{equation}
where \(\gamma(I) = \{\gamma(a) : a \in I\}\) is a curve introduced in Definitions \ref{def:ND} and \ref{def:SD}. 
A schematic illustration of this configuration is provided in Figure~\ref{fig:TNe}.

\begin{figure}[htbp]
    \centering
     \begin{subfigure}[b]{0.3\textwidth}  
        \centering
        \includegraphics[width=\linewidth]{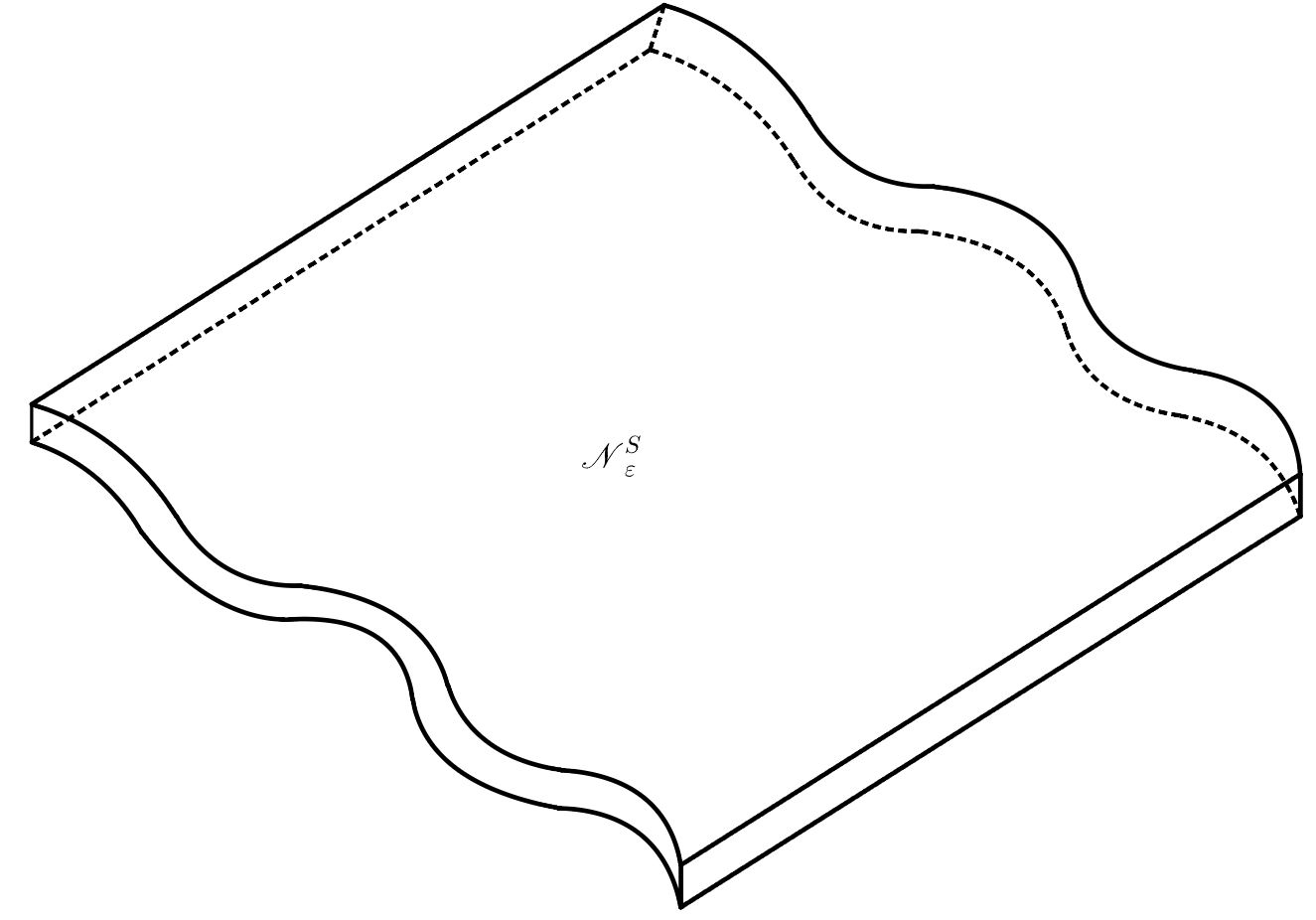}
        \caption{A single slab domain}
    \end{subfigure}
     \hspace{10mm}  
    \begin{subfigure}[b]{0.38\textwidth} 
        \centering
        \includegraphics[width=\linewidth]{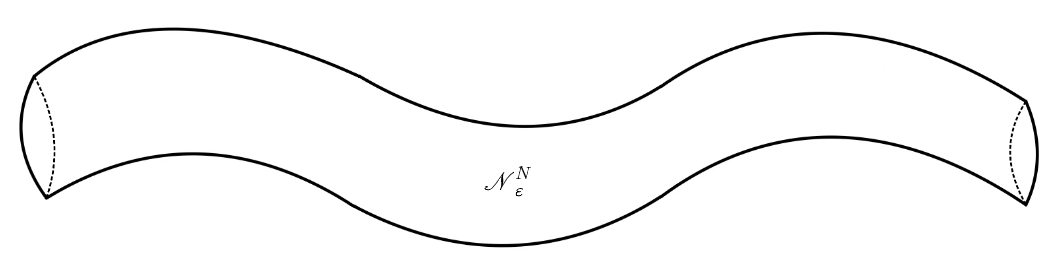}
        \caption{A single nozzle domain}
    \end{subfigure}
    \caption{Schematic illustration of the nozzle and slab domains in $\mathbb{R}^{3}$}
    \label{fig:TNe}
\end{figure}


We now roughly state the main result of the inverse problem outlined in equation~\eqref{eq:main_IP}. This result establishes that, under generic conditions, the operators \(\mathscr{F}\) and \(\mathfrak{f}\), associated with the constitutive functions \(\BF\) and \(f\) in the system \eqref{eq:bl}, can be uniquely and approximately recovered from a single passive measurement \(\mathcal{M}_{\mathscr{F},\mathfrak{f}}\) on the lateral boundary $\hat{\Gamma}\times [0,T]$ defined in \eqref{def:Gamma}. 

\begin{thm}\label{main:thm0}
Let \(\Omega\) be a product-type domain defined in \eqref{eq:NNN}. Assume that \(\BF^j\) and \(f^j\) belong to the admissible class \(\mathcal{A}\), with the corresponding solution \(u^j\) in the system \eqref{eq:bl} and the associated operators \(\mathscr{F}^j\) and \(\mathfrak{f}^j\) for \(j=1,2\). Let \(\Sigma \subset \widetilde{\Gamma}\), and let \(\mathcal{S}\) be an appropriately chosen function space defined on \(\Sigma\). If
\begin{equation*}
    \mathcal{M}_{\mathscr{F}^1,\mathfrak{f}^1} (u^1|_{\Sigma}) = \mathcal{M}_{\mathscr{F}^2,\mathfrak{f}^2} (u^2|_{\Sigma}) \quad \text{for } u^j|_{\Sigma} \in \mathcal{S},\,j=1,2,
\end{equation*}
then for any \((\Bx,t) \in Q \), we obtain
\begin{itemize}
    \item [(i)] \( |\tilde{\mathscr{F}^1}(u^j)(\Bx,t) - \tilde{\mathscr{F}^2}(u^j)(\Bx,t)| \leq C\varepsilon^{\tau} \), where \(\tilde{\mathscr{F}^1}(u^j)(\Bx,t)\) and \(\tilde{\mathscr{F}^2}(u^j)(\Bx,t)\) are related to \(\mathscr{F}^1(u^j)(\Bx,t)\) and \(\mathscr{F}^2(u^j)(\Bx,t)\) for \(j=1,2\), respectively.
    \item [(ii)] Furthermore, if \(\mathscr{F}^1(u^1)(\Bx,t) = \mathscr{F}^2(u^2)(\Bx,t)\), then 
    \[
    |\mathfrak{f}^1(u^j)(\Bx,t) - \mathfrak{f}^2(u^j)(\Bx,t)| \leq C' \varepsilon^{\tau_1}.
    \]
\end{itemize}
where \(\tau, \tau_1 \in (0,1)\), and $C$ and $C'$ are positive constants independent of \(\varepsilon\).
\end{thm}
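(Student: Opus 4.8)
The plan is to exploit the product-type (thin) geometry of $\Omega$ in \eqref{eq:NNN} to replace the full problem \eqref{eq:bl} by an effective lower-dimensional balance law along the backbone curve $\gamma(I)$, on which a single passive lateral observation suffices to pin down the cross-sectionally averaged realizations of the flux and source operators; the $\Oh(\varepsilon^\tau)$ losses are the price of this reduction. I would argue on each component $\mathscr N_\varepsilon^\ell$ in \eqref{eq:NNN} separately and reassemble at the end. \textbf{Step 1 (stretched coordinates and uniform reduction).} Parametrize $\mathscr N_\varepsilon^\ell$ by $(a,y)\in I\times\omega$, where $\omega$ is a fixed reference cross-section of diameter $\Oh(1)$ and the physical point is the parallel transport of $\varepsilon y$ along $\gamma$; in these coordinates the principal part of the operator splits as $\varepsilon^{-2}\Delta_y+\Delta_a+(\text{lower order in }\varepsilon)$, with only bounded, $\varepsilon$-independent curvature coefficients. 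Using $\BF^j,f^j\in\mathcal A$ and the fixed Dirichlet datum carried by $\mathcal M_{\mathscr F,\mathfrak f}$, I would first establish energy bounds for $u^j$ uniform in $\varepsilon$ and combine them with the transverse Poincar\'e inequality $\|u^j-\langle u^j\rangle_\omega\|_{L^2(Q)}\le C\varepsilon\|\nabla u^j\|_{L^2(Q)}$ to obtain $u^j(\Bx,t)=U^j(a,t)+\Oh(\varepsilon^\tau)$ with $U^j:=\langle u^j\rangle_\omega$, together with the analogous reduction of $\nabla u^j$ and $\Delta u^j$ to their longitudinal and curvature parts. The difference $w:=u^1-u^2$ solves a linear variable-coefficient equation vanishing on $\Sigma$; combining the thin-tube energy decay (the transverse Poincar\'e gap is $\Oh(\varepsilon^{-2})$) with a quantitative unique-continuation estimate along $\gamma(I)$ and in $t$ — whose normally logarithmic modulus becomes power-type because the longitudinal penetration depth is comparable to $\varepsilon$ — yields $\|w\|_{H^1(Q)}\le C\varepsilon^\tau$, hence $|U^1-U^2|\le C\varepsilon^\tau$ on $Q$.

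\textbf{Step 2 (cross-sectional reduction; proof of $(i)$).} Integrating \eqref{eq:bl} over $\Omega_\varepsilon(a)$, the term $\nabla_{\Bx}\cdot\BF$ contributes $\partial_a$ of the averaged longitudinal component of $\BF$ together with $\oint_{\partial\Omega_\varepsilon(a)}\BF\cdot\nu\,\rmd s$, while the $\Delta u$-dependence of $f$ contributes, via the divergence theorem, $\oint_{\partial\Omega_\varepsilon(a)}\partial_\nu u\,\rmd s$ plus longitudinal terms; adding these, the combination $\partial_\nu u+h_{\BF}$ — the second slot of $\mathcal M_{\mathscr F,\mathfrak f}$ — surfaces as the lateral flux, and one obtains the effective balance law
\begin{equation*}
|\Omega_\varepsilon|\,\partial_t H(U^j)+\partial_a\widetilde{\mathscr F}^{\,j}(a,t,U^j)=\widetilde{\mathfrak f}^{\,j}\big(a,t,U^j,\partial_a U^j,\partial_a^2 U^j\big)+\oint_{\partial\Omega_\varepsilon(a)}\big(\partial_\nu u^j+h_{\BF^j}\big)\,\rmd s+\Oh(\varepsilon^\tau),
\end{equation*}
where $\widetilde{\mathscr F}^{\,j}$ and $\widetilde{\mathfrak f}^{\,j}$ are the cross-sectionally averaged realizations of $\mathscr F^j$ and $\mathfrak f^j$ — $\widetilde{\mathscr F}^{\,j}$ being the object the statement denotes $\tilde{\mathscr{F}^j}$. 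On $\hat\Gamma$, $h_{\BF^j}$ is an explicit functional of $\BF^j$ and the boundary trace of $u^j$; since $u^1=u^2$ there while, by Step 1, $\partial_\nu u^1$ and $\partial_\nu u^2$ agree up to $\Oh(\varepsilon^\tau)$ (trace interpolation between the uniform $H^2$ bound on $u^j$ and the $H^1$-smallness of $w$), the matched datum $\partial_\nu u^1+h_{\BF^1}=\partial_\nu u^2+h_{\BF^2}$ forces $|h_{\BF^1}-h_{\BF^2}|\le C\varepsilon^\tau$ on $\hat\Gamma$, which is the quantitative form of the flux balance condition of Section~\ref{sec:result}. Because every interior point lies within $\Oh(\varepsilon)$ of $\hat\Gamma$ and $u^j$ is within $\Oh(\varepsilon^\tau)$ of its boundary trace, this identity propagates into $Q$ and gives $|\widetilde{\mathscr F}^{\,1}(u^j)(\Bx,t)-\widetilde{\mathscr F}^{\,2}(u^j)(\Bx,t)|\le C\varepsilon^\tau$ for all $(\Bx,t)\in Q$, which is $(i)$.

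\textbf{Step 3 (proof of $(ii)$; reassembly).} Suppose in addition $\mathscr F^1(u^1)(\Bx,t)=\mathscr F^2(u^2)(\Bx,t)$. Then $\nabla_{\Bx}\cdot\BF^1(\Bx,t,u^1)=\nabla_{\Bx}\cdot\BF^2(\Bx,t,u^2)$, so subtracting the two instances of \eqref{eq:bl} leaves $\partial_t\big(H(u^1)-H(u^2)\big)=f^1(\Bx,t,u^1,\nabla u^1,\Delta u^1)-f^2(\Bx,t,u^2,\nabla u^2,\Delta u^2)$; equivalently, in the reduced picture the flux term drops out of the subtracted effective equation, leaving only $|\Omega_\varepsilon|\partial_t\big(H(U^1)-H(U^2)\big)$ and $\widetilde{\mathfrak f}^{\,1}-\widetilde{\mathfrak f}^{\,2}$. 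The left side is $\Oh(\varepsilon^{\tau_1})$ by $|U^1-U^2|\le C\varepsilon^\tau$ and the companion control of $\partial_t w$, whence $|\widetilde{\mathfrak f}^{\,1}-\widetilde{\mathfrak f}^{\,2}|\le C'\varepsilon^{\tau_1}$ along the sampled arguments; de-averaging over the cross-section, now using $\mathscr F^1(u^1)=\mathscr F^2(u^2)$ to absorb the terms carrying derivatives of $u^j$, yields $(ii)$. Collecting the componentwise bounds over $\ell=1,\dots,M$ and relabeling the exponents finishes the proof.

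The step I expect to be the main obstacle is disentangling the convective flux $\BF$ from the genuinely second-order, nonlinear source $f(\Bx,t,u,\nabla u,\Delta u)$ inside a single scalar balance law: the $\Delta u$-dependence makes $f$ effectively quasilinear and parabolic, so the observed quantity $\partial_\nu u+h_{\BF}$ entangles the diffusive part of $f$ with $\BF\cdot\nu$ — which is precisely why $h_{\BF^1}-h_{\BF^2}$ must obey the flux balance condition and why the recovery of $\widetilde{\mathfrak f}$ is only conditional on that of $\widetilde{\mathscr F}$. Compounding this, only a single solution per configuration is available — there is no Dirichlet-to-Neumann map — so the usual multi-input linearization is unavailable and the whole argument must be powered by the thin geometry; the genuine technical weight lies in making every error term power-type and uniform in $\varepsilon$, in tracking how the admissible class $\mathcal A$ bounds the nonlinearities after the $\varepsilon$-stretched coordinate change, and in pinning down the explicit exponents $\tau,\tau_1\in(0,1)$ (as well as, when $\Sigma$ is a strict subset of $\hat\Gamma$, in first propagating the coincidence of data along $\gamma(I)$ by a Carleman estimate with the $\varepsilon$-improved stability of Step 1).
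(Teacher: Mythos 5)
Your proposal has the right intuition about the geometry (thinness drives everything, the data entangle $\partial_\nu u$ with $h_{\BF}$, and the source can only be recovered conditionally), but the two decisive steps are not actually carried out, and I do not see how they could be along the route you sketch. First, the bound $\|w\|_{H^1(Q)}\le C\varepsilon^\tau$ in Step 1 is unjustified and essentially circular: $u^1$ and $u^2$ solve equations with \emph{different, unknown} fluxes and sources, so the equation satisfied by $w=u^1-u^2$ carries the right-hand side $(f^1-f^2)-\nabla_{\Bx}\cdot(\BF^{1}-\BF^{2})$, which is exactly the quantity to be estimated; no unique-continuation or Carleman argument from the coinciding Cauchy data on $\Sigma$ can make $w$ small in the interior without already knowing that this right-hand side is small, and the ``power-type'' improvement of the usually logarithmic stability is asserted, not proved. (The paper never needs such a bound: it only uses the $C^{1,\alpha_4}$ a priori regularity \eqref{Cond:uv} together with $w=0$, $\partial_\nu w=h$ on $\hat\Gamma$ and the fact that every point of the small cell is within $\Oh(\varepsilon)$ of the boundary to expand $w$ there.) Second, and more fundamentally, your Step 2 never isolates the flux difference: integrating over cross-sections produces an effective law in which only $\partial_a$ of the averaged longitudinal flux difference appears, added to the unknown source difference $\widetilde{\mathfrak f}^1-\widetilde{\mathfrak f}^2$ and to $\partial_t(H(U^1)-H(U^2))$; control of a derivative mixed with unknown terms gives no pointwise bound on $\tilde{\mathscr F}^1-\tilde{\mathscr F}^2$, and the ``propagation from $\hat\Gamma$'' of $|h_{\BF^1}-h_{\BF^2}|\le C\varepsilon^\tau$ into the interior does not transfer to the (rotated, longitudinal) components of two \emph{different} functions $\BF^1,\BF^2$ evaluated at interior points. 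The paper's mechanism for this separation is precisely what is missing: it pairs the difference equation with the CGO solution $u_0=e^{\rho\cdot\Bx/\sqrt{\mu}+\lambda t}$, $s=\varepsilon^{-\beta}$, in a space–time Green/divergence identity on a cell $D_\varepsilon$ of size $\varepsilon^l\times\varepsilon$ (time window $\varepsilon^2$), where $\nabla u_0\sim s\,u_0$ amplifies the flux term ($|I_{43}|\gtrsim s\varepsilon^{2+l}$) by the factor $s$ relative to the source, boundary, and $H$-terms, and then optimizes $\beta$ to obtain the exponent $\tau$ in \eqref{eq:ta}; your scheme has no analogue of this large parameter, which is why the entanglement you yourself flag at the end is never resolved.

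The same issue recurs in Step 3: the claim that $\partial_t\bigl(H(u^1)-H(u^2)\bigr)$ is pointwise $\Oh(\varepsilon^{\tau_1})$ requires control of time derivatives of $w$ that neither the hypotheses ($C^{1,\alpha_4}$ in space–time, no uniform $H^2$ bound) nor your Step 1 provide. The paper again avoids differentiating $H(u^1)-H(u^2)$ in time altogether: in case (b) it uses the same weighted identity (now $I_1=I_3-I_5-I_6$), a lower bound $|I_{31}|\gtrsim |(f^1-f^2)(\Bx_0,t_0)|\,\varepsilon^{2+l}$, and Hölder expansions of $H$, $u^j$ near boundary points, yielding $\tau_1$ after the choice of $s$. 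So while your reduction-to-the-backbone picture is a reasonable heuristic for why the thin geometry should encode interior information in lateral data, as a proof it is missing the key analytic device (the $\varepsilon$-calibrated CGO test function and the resulting balancing of integral terms) that makes both (i) and (ii) quantitative.
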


The precise formulations and detailed analysis of Theorem \ref{main:thm0} are developed in Section \ref{sec:result}. The logical essence of the theorem \ref{main:thm0} is that when the boundary measurements coincide for two different configurations, the corresponding realizations of the operators acting on two solutions are quantitatively close. Specifically, the difference between $\mathscr{F}^1(u^j)$ and $\mathscr{F}^2(u^j)$ is bounded by \(O(\varepsilon^{\tau})\), where $\varepsilon$ is the geometric parameter characterizing the domain's small scale and $\tau$ is a regularity-dependent exponent. Moreover, under the additional condition that the flux operators produce identical effects on their respective solutions, the corresponding source term realizations \(\mathfrak{f}^1(u^j)\) and \(\mathfrak{f}^2(u^j)\) are guaranteed to be \(O(\varepsilon^{\tau_1})\)-close, where \(\tau_1\) is an additional regularity parameter.

\begin{remark}\label{re:hidden}
For the inverse problem in~\eqref{eq:main_IP}, we analyze a specialized geometric configuration in which the domain consists of a union of product manifolds \(\mathscr{N}_\varepsilon^{\ell} = \Omega_s \times \Omega_r\). Such as in $\mathbb{R}^3$, \(\Omega_s = \Omega_\varepsilon\) is a two-dimensional manifold which has the hidden dimension, and \(\Omega_r = \gamma(I)\) is a one-dimensional manifold, which has the regular dimension. Our main result establishes the unique identifiability of physical processes confined to the hidden dimension \(\Omega_s\) from a single passive observation on the boundary \(\partial \Omega_s \times \Omega_r\), which represents the projection onto the regular dimension \(\Omega_r\). We quantify the the difference between the operators \(\mathscr{F}^1\), \(\mathscr{F}^2\) (associated with the constitutive function \(\BF\)) and \(\mathfrak{f}^1\), \(\mathfrak{f}^2\) (associated with \(f\)) for the system ~\eqref{eq:bl}. Under suitable a priori assumptions on \(\BF\) and \(f\), this bound is controlled by the geometric parameter \(\varepsilon\) of \(\Omega_s\). This demonstrates that macroscopic variations in the operators are directly influenced by the micro-geometry of the hidden dimension.
\end{remark}

\subsection{Technical developments and discussion}\label{subsec:RB-TI}

Nonlinear balance laws provide a versatile framework for modeling diverse physical systems across quantum measurements, gravitational wave detection, fluid dynamics, biological processes, and quantum mechanics, as exemplified by the representative models in Table \ref{table1}. This paper introduces a unified mathematical framework, described by \eqref{eq:bl}, that encapsulates these diverse phenomena. Our primary objective is to develop a robust methodology for identifying nonlinear operators from boundary measurements in multiscale product spaces, offering a cohesive approach to inverse problems spanning multiple physical domains.

\begin{table}[!htbp]  
    \centering
    \caption{Summary of PDE Models and Symbols}
    \label{table1}
    \begin{tabular}{>{\raggedleft\arraybackslash}p{2.0cm}  
                    >{\raggedright\arraybackslash}p{5.5cm} 
                    >{\raggedright\arraybackslash}p{4.5cm}}
        \toprule  
        \textbf{Model} & \textbf{Equation} & \textbf{Symbol Meanings} \\
        \midrule  
        Navier-Stokes & 
        $\partial_t \Bu + (\Bu \cdot \nabla)\Bu = \mu \Delta \Bu - \nabla p + \BF$ & 
        $\Bu$: velocity, $p$: pressure, $\mu$: viscosity \\
        \addlinespace[3pt]  
        
        Non-Newtonian & 
        $\partial_t \Bu + \nabla \cdot (\mu(|\nabla \Bu|)\nabla \Bu) = \mathbf{G}$ & 
        $\mu$: variable viscosity, $\mathbf{G}$: body force \\
        \addlinespace[3pt]
        
        FitzHugh-Nagumo & 
        $\begin{aligned}
            \partial_t u &= \Delta u + u(1-u)(u-a) - v \\
            \partial_t v &= \epsilon \Delta v + \gamma(u - bv)
        \end{aligned}$ & 
        $u,v$: potentials; $a,b,\epsilon,\gamma$: constants \\  
        \addlinespace[3pt]
        
        Chemotaxis & 
        $\partial_t c = \nabla \cdot (D\nabla c - \chi\nabla \phi) + R(c)$ & 
        $c$: density; $\phi$: attractant; $\chi$: sensitivity \\
        \addlinespace[3pt]
        
        Magnetic Schrödinger & 
        $(i\partial_t + \nabla \cdot \mathbf{A})^2 \psi + V\psi = 0$ & 
        $\psi$: wave function; $\mathbf{A}$: magnetic potential \\
        \addlinespace[3pt]
        
        Ginzburg-Landau & 
        $\partial_t \psi = \alpha \Delta \psi + \beta |\psi|^2 \psi + \gamma \mathbf{A} \cdot \nabla \psi$ & 
        $\psi$: order parameter; $\alpha,\beta,\gamma$: constants \\
        \bottomrule  
    \end{tabular}
\end{table}

The study of forward problems for general systems of balance laws remains in its nascent stages, with the existing literature primarily focused on specific forms-notably hyperbolic conservation laws, where the flux $\BF$ and source $f$ are restricted to particular types. Historical overviews can be found in \cite{Da2005, EL2024, Lax1960, MA1984, Serre1999, Serre2000}. Well-posedness proofs for these systems can be broadly classified into two approaches. The first approach addresses the development of singularities, such as shock waves, which emerge in finite time even from smooth, small initial data. This is exemplified in \cite{HN2003} for conservation laws and in \cite{Y2004, Z2017} for balance laws. The second approach establishes the existence of classical solutions over short time intervals, as demonstrated in \cite{Da2005} for conservation laws and \cite{MMT2021} for the Schr\"{o}dinger equation. Despite these advances, significant open challenges persist, as noted in \cite{Serre1999, Serre2000}. Historically, the theory of balance laws has relied heavily on ad hoc methods. These approaches have also been extended to coupled systems, notably in the context of time-dependent mean-field games \cite{CGM2020, CT2019, A2021}.

Due to the reasons discussed above, inverse problems for the general form of nonlinear balance laws have received considerably less attention. The primary reason for this disparity is straightforward: the mathematical tools and theoretical understanding required to tackle inverse problems robustly are predicated on a well-established theory for the forward problem, which, as discussed, is still taking shape. Most existing results are confined to simpler models and one-dimensional scalar cases, such as the BFGS-based ``discretize-then-optimize” strategy \cite{BCS2009}, Lipschitz stability analysis via Carleman estimates \cite{FT2021}, reconstruction methods using front-tracking and Riemann solvers \cite{HPR2014}, and flux identification from shock asymptotics \cite{KT2005}. Extending these methodologies to higher spatial dimensions, however, introduces substantial theoretical and computational challenges. Key difficulties include the intricate geometry of characteristic networks, the formation of non-planar shocks, complex multi-dimensional wave interactions, and the curse of dimensionality affecting both numerical simulation and parameter space exploration. Moreover, foundational one-dimensional analytical tools, such as the method of characteristics, Riemann solvers, and shock-tracking algorithms do not generalize directly to higher dimensions, further complicating the mathematical analysis.

In this paper, we investigate the inverse problem for the general form of nonlinear balance laws, as presented in \eqref{eq:bl}, incorporating source terms in multiple dimensions. As previously highlighted, this problem remains a significant challenge with limited progress in the literature. Our primary focus is the reconstruction of the flux term $\BF$ and the source term $f$ in \eqref{eq:bl} using minimal observational data, specifically a single passive boundary measurement. Existing studies, such as \cite{CM2022, BCG2009, BFM2014, CGR2006, DLZ2023, IY1998, LL2024, LLL2024}, predominantly focus on inverse problems for specific forms of the nonlinear balance laws and on recovering explicit analytical forms of $\BF$ and $f$, often relying on strong a priori assumptions, such as specific polynomial structures. However, in practical applications, the input-output behavior of these terms is typically of greater interest than their exact functional forms. This insight drives our novel approach: we reformulate the inverse problem within an operator-theoretic framework, aiming to identify the nonlinear operators $\mathscr{F}$ and $\mathfrak{f}$, which map the state function $u$ to the flux and source responses $\mathscr{F}(u)$ and $\mathfrak{f}(u)$, respectively, without requiring explicit analytical representations. This method eliminates the need for restrictive a priori assumptions on functional forms and better aligns with practical scenarios where the system's behavioral response is the primary concern.

In what follows, we restrict the domain $\Omega$ to the class of product-type domains defined in \eqref{eq:NNN}–\eqref{eq:Ne}. As noted in Remark~\ref{Re:TN}, the topological structure of such domains has recently attracted limited but growing attention. Within this geometric setting, several inverse problems have been studied \cite{MW2006, KLU2012, DLZ2021,DLU2017}. It is worth emphasizing that our geometric framework for slab-like domains is quite general and does not require the domain to be infinite. Moreover, our framework establishes explicit quantitative relationships between the geometric parameter $\varepsilon$ and reconstruction accuracy, elucidating how microscale structures govern the recoverability of nonlinear dynamics from macroscale boundary data. As discussed in Remark \ref{re:hidden}, our work draws a conceptual parallel with the AdS/CFT correspondence in string theory \cite{MJ, Witten1998, Overduin1997}, wherein bulk physics is holographically encoded in boundary data.

To establish these results, we first introduce the operators \(\mathscr{F}^j\) and \(\mathfrak{f}^j\), which correspond to the flux function \(\mathbf{F}\) and the source term \(f\), respectively, thereby reformulating the inverse problem within an operator-theoretic framework. Next, we develop complex geometrical optics (CGO) solutions for the underlying balance laws and utilize microlocal analysis to characterize the behavior of \(\mathscr{F}^j\) and \(\mathfrak{f}^j\) in product-type domains. By leveraging the intrinsic geometric properties of these solutions, we achieve an approximate unique determination of both operators from a single local passive measurement.

The remainder of this paper is structured as follows. Section \ref{sec:result} introduces the geometric framework and states the main uniqueness theorems. Section \ref{sec:proof-2D} details the two-dimensional proof, including specialized geometric optics solutions and key energy estimates. Section \ref{sec:proof-3D} extends the analysis to three-dimensional configurations. Finally, Section \ref{sec:application} demonstrates the applicability of our framework to physical systems, with a focus on reaction-diffusion-convection models.

\section{Statement of main results and preliminaries} \label{sec:result}
\subsection{Mathematical setup}
We mainly restrict our study to the nonlinear system 
\begin{equation}\label{eq:main-mu}
    \begin{cases}
    \partial_t H(u) + \nabla_{\Bx} \cdot \BF(\Bx,t,u) = f(\Bx,t,u,\nabla u) + \mu \Delta u &\  \text{in}\ \ Q,\\
    u(\Bx,t) = \boldsymbol{\psi}(\Bx,t) &\  \text{on}\ \ \widetilde\Gamma,
\end{cases}
\end{equation}
where $\mu \in \mathbb{R}_+$ is a positive constant, typically representing the viscosity coefficient in fluid mechanics. The terms \(H\), \(\mathbf{F}\), and \(f\), as defined in \eqref{eq:bl}, satisfy essential a priori assumptions critical for tackling the challenges of limited passive boundary measurements, as elaborated in Section \ref{sub:math}.

\begin{definition}[Admissible class $\mathcal{A}$]\label{Def:admc}
We say a triplet $(H, \BF, f) \in \mathcal{A}$ if they satisfy the following properties:
\begin{itemize}
\item [$(i)$] The function $H: \mathbb{C} \to \mathbb{C}$ is H\"older continuous with exponent $\alpha_1$, where $0 < \alpha_1 < 1$, i.e., $H \in C^{0,\alpha_1}(\mathbb{C})$.
\item  [$(ii)$] The function $f:Q \times \mathbb{C} \times \mathbb{C} \to \mathbb{C}$ is H\"older continuous with exponent $\alpha_2$, where $0 < \alpha_2 < 1$, i.e., $f \in C^{0,\alpha_2}(Q \times \mathbb{C} \times \mathbb{C})$. Its norm is defined as:
\begin{align}
  \qquad \|f\|_{C^{0,\alpha_2}} = \max & \left\{ \sup\|f(\cdot,t_0,z_0,p_0)\|_{C^{0,\alpha_2}(\Omega)},\right.  \sup\|f(\Bx_0,\cdot,z_0,p_0)\|_{C^{0,\alpha_2/2}(\mathbb{C})}, \notag\\
    &\left. \hspace{0.2cm} \sup\|f(\Bx_0,t_0,\cdot,p_0)\|_{C^{0,\alpha_2}(\mathbb{C})},  \sup\|f(\Bx_0,t_0,z_0,\cdot)\|_{C^{0,\alpha_2}(\mathbb{C})} \right\},\notag
\end{align}
 where \(``sup"\) denotes the supremum taken over all possible values of the corresponding triplet across their entire respective domains. For a fixed $(t_0, z_0, p_0)$, the spatial H\"{o}lder norm is defined as
\[\qquad
   \|f(\cdot,t_0,z_0,p_0)\|_{C^{0,\alpha_2}(\Omega)} := \|f\|_{L^\infty(\Omega)} + \sup_{\Bx\neq\mathbf{y}} \frac{|f(\Bx,t_0,z_0,p_0) - f(\mathbf{y},t_0,z_0,p_0)|}{|\Bx-\mathbf{y}|^{\alpha_2}}
   \]
  and the other norms are defined analogously for their respective domains.
   \item [$(iii)$] Each component $f_k$ of the vector-valued function $\BF = (f_k)_{k=1}^n: Q \times \mathbb{C} \to \mathbb{C}$ is H\"older continuous with exponent $\alpha_3$, where $0 < \alpha_3 < 1$, i.e., $f_k \in C^{0,\alpha_3}(Q \times \mathbb{C})$ for $k = 1, \dots, n$.
   \item [$(iv)$] For the triplet $(H, \BF, f)$ satisfying conditions (i)-(iii), there exists a solution to the boundary value problem \eqref{eq:main-mu} for some boundary data $\psi \in H^{1/2}(\widetilde{ \Gamma})$.
\end{itemize}
\end{definition}

\begin{remark}\label{rem:n1}

    As detailed in Subsection~\ref{subsec:RB-TI}, establishing the existence of solutions to the forward problem for system \eqref{eq:main-mu} is highly challenging. However, local existence on short-time intervals is a well-established result; see \cite{Da2005, MA1984, MMT2021, Z2017} and references therein. Since the inverse problem considered in this paper relies on boundary observations from an arbitrary subinterval $[T_1, T_2] \subset [0, T]$, we may, without loss of generality, restrict our analysis to such a short-time interval. Consequently, the existence requirement in condition (iv) of Definition~\ref{Def:admc} is satisfied under the stated regularity assumptions.


\end{remark}

In the mathematical framework outlined above, the functions $(H, \BF, f)$ are defined through the admissible class $\mathcal{A}$ in Definition \ref{Def:admc}. Building upon this functional foundation, we now introduce the corresponding operator structures. As discussed in Section \ref{sub:math}, we establish the operators $\mathscr{F} = (\mathscr{F}_k)_{k=1}^n$ and $\mathfrak{f}$ associated with the flux $\BF$ and source term $f$, respectively. Within this correspondence, the functions $\BF$ and $f$ can be viewed as realizations of the operators $\mathscr{F}$ and $\mathfrak{f}$. Specifically, according to the operator definition in equation \eqref{eq:oper}, the components of the vector-valued function $\BF$ constitute realizations of the operator $\mathscr{F} = (\mathscr{F}_k)_{k=1}^{n}$, while the scalar nonlinearity $f$ represents a realization of the operator $\mathfrak{f}$. Within this operator framework, we can establish the approximate unique identifiability of the inverse problem \eqref{eq:main_IP} within an a priori admissible class of these functions using a single passive measurement on $\Sigma$.

Our analysis focuses on recovering the realizations of the operators $\mathscr{F}$ and $\mathfrak{f}$ from a single passive boundary observation. This result shows that we can obtain information in small-scale space $\Omega_{\varepsilon}$ by observing information in regular-scale space $\Omega_r$. The following theorem provides a precise formulation of Theorem \ref{main:thm0}.

\begin{thm}\label{main:thm}

Let \(\Omega\) be a product-type domain as in \eqref{eq:NNN}, and let \(\mathcal{M}_{\mathscr{F}^j, \mathfrak{f}^j},\,(j=1,2)\) be the passive boundary measurement on \(\widetilde{\Gamma}\) corresponding to the triplet \((H, \BF^j, f^j)\in \mathcal{A}\) and its solution \(u^j\) in \eqref{eq:main-mu}, with \(\mathscr{F}^j\) and \(\mathfrak{f}^j\) being the operators associated to \(\BF^j\) and \(f^j\) as defined in \eqref{eq:oper}. Assume there exist \(\varepsilon\)-independent positive constants \(C_1\), \(C_1^{\prime}\), \(C_2\), \(C_3\), and \(C_4\), and H\"older exponents \(\alpha_1, \alpha_2, \alpha_3, \alpha_4 \in (0,1)\)
 satisfying the following conditions:
\begin{itemize}
\item[$(i)$] The source term $f^j$ satisfies
\begin{equation}\label{Cond:fg0}
\|f^j\|_{C^{0,\alpha_2}(\overline{Q} \times \mathbb{C}\times  \mathbb{C}) }\leq C_{2};
\end{equation}
\item [$(ii)$] The nonlinear terms $f_k^j$ satisfy for $k=1, \ldots, n$:
 \begin{equation}\label{Cond:FG}
\|f_k^j\|_{C^{0,\alpha_3}(\overline{Q} \times \mathbb{C})}\leq C_{3};
\end{equation}
\item [$(iii)$] The solution $u^j$ satisfies
 \begin{equation}\label{Cond:uv}
\begin{split}
\qquad u^j\in H^{1}_{\mathrm{loc}}(Q)\cap C^{1,\alpha_4}(\overline{Q}), \,\, \mbox{with} \,\,
 \|u^j\|_{C^{1,\alpha_4}}\leq C_{4}.
\end{split}
 \end{equation}
\end{itemize}
 If for any subset $\Sigma\subset \widetilde{\Gamma}$ and sufficiently small $\varepsilon$, one has
\begin{equation*}
    \mathcal{M}_{\mathscr{F}^1,\mathfrak{f}^1} (u^1|_{\Sigma}) = \mathcal{M}_{\mathscr{F}^2,\mathfrak{f}^2} (u^2|_{\Sigma}),
\end{equation*}
which yields the flux balance condition
\begin{equation}\label{eq:fb}
   \int_{\Sigma} \nu \cdot (\BF^1 - \BF^2) \, \mathrm{d}\sigma = \int_{\Sigma} \mu (h_{\BF^1} -h_{\BF^2}) \, \mathrm{d}\sigma,
\end{equation}
where $\nu$ is the unit outward normal and  $h_{\BF^j} \in L^2(Q) \cap C^{0,\alpha_4}(\overline{Q})$, then we have
\begin{itemize}
\item[$(a)$] If $\mathscr{F}^1(u^1)(\Bx,t)-\mathscr{F}^2(u^2)(\Bx,t) \neq \mathbf{0}$ in $ Q$, suppose the curve $\gamma$ defined in Definition \ref{def:ND} satisfies the following condition: for some $l >0$, there exist parameters $a_1, a_2 \in I$ such that as long as $|a_1 - a_2| \leq \varepsilon^l$, then there must exist $b_1,b_2 \in (a_1,a_2)$ such that $\gamma'(b_1)$ is parallel to $\gamma'(b_2)$. Furthermore, assume the nonlinear state transfer function $H$ satisfies
\begin{equation}\label{Cond:H}    
\|H\|_{C^{0,\alpha_1}(\mathbb{C})}\leq C_{1}.
\end{equation}
 Then, for any $(\Bx,t)\in Q$, there exists a positive constant $\varepsilon_0$ such that when $\varepsilon < \varepsilon_0$, the following estimate holds
    \begin{equation}\label{Re:FG}
    \left|\tilde{\mathscr{F}^1}(u^j)(\Bx,t)-\tilde{\mathscr{F}^2}(u^j)(\Bx,t)\right|\leq C(\varepsilon_{0},\mu,\|h\|_{L^{\infty}},C_{1},C_{2},C_{3},C_{4})\varepsilon^{\tau},\,\,\, j=1,2,
     \end{equation}
where $h:=h_{\BF^1} -h_{\BF^2}$, $C(\varepsilon_{0},\mu,\|h\|_{L^{\infty}},C_{1},C_{2},C_{3},C_{4})$ is  a generic positive constant depending only on the indicated parameters, and $\tau$ is given as follows 
\begin{equation}\label{eq:ta}
\tau = \begin{cases}\alpha_2 \alpha_3l & \text { if } \,\, 0<l\leq \frac{1}{1+\alpha_2 \alpha_3}, \\ \frac{1}{2}\left[1-l\left(1-\alpha_2 \alpha_3\right)\right] & \text { if }\, \,\frac{1}{1+\alpha_2 \alpha_3}<l<1, \\ \frac{1}{2}\left(1-l+\alpha_2 \alpha_3 \right) & \text { if }\, \, 1 \leq l<1+\alpha_2 \alpha_3.
\end{cases}
\end{equation} 
\item[$(b)$] If $\mathscr{F}^1(u^1)(\Bx,t)-\mathscr{F}^2(u^2)(\Bx,t) = \mathbf{0}$ and $h_{\BF^1} -h_{\BF^2}=0$ in $Q$, assume that for any point $\Bx\in\Omega$, there exists at least one point $\Bx_0\in \hat{\Gamma}$ such that the vector $\Bx-\Bx_0$ is parallel to $\nu_{\Bx_{0}}$, where $\nu_{\Bx_{0}}$ is the unit outward normal at $\Bx_0$.  Additionally, the nonlinear state transfer function $H \in C^{1,\alpha_1}$ satisfies
\begin{equation}\label{Cond:Hb}
    \|H\|_{C^{1,\alpha_1}(\mathbb{C})}\leq C_{1}^{\prime}.
\end{equation} 
Then, for any $(\Bx,t)\in Q$, the following estimate holds
    \begin{equation}\label{Re:fg}
    \hspace{1.5cm}    |\mathfrak{f}^1(u^j)(\Bx,t) -\mathfrak{f}^2(u^j)(\Bx,t)| \leq C'(\varepsilon_{0},\mu,C_{1}^{\prime},C_{2},C_{4})\varepsilon^{\tau_1},\quad j=1,2,
    \end{equation}
 where $C'(\varepsilon_{0},\mu,C_{1},C_{2},C_{3},C'_{4})$ is a generic positive constant depending only on the indicated parameters and  $\tau_1 = \frac{\alpha_1 \alpha_3^2}{1+\alpha_1 \alpha_3}$.

\end{itemize}

\end{thm}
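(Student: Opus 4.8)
The plan is to exploit the product-type geometry of $\Omega$ by constructing, for each component $\mathscr{N}_\varepsilon^\ell$, a family of complex geometrical optics (CGO) solutions to the linearized equation associated with \eqref{eq:main-mu} that concentrate along the regular direction $\gamma(I)$ while oscillating in the hidden cross-section $\Omega_\varepsilon$. Writing $w := u^1 - u^2$, I would first subtract the two copies of \eqref{eq:main-mu} and use the admissible-class regularity \eqref{Cond:uv} together with the Hölder bounds \eqref{Cond:fg0}--\eqref{Cond:FG} on $H$, $\BF^j$, $f^j$ to derive an equation of the form $\mu\Delta w = \nabla_\Bx\cdot(\BF^1-\BF^2)(\cdot,\cdot,u^1) + (\text{Hölder-controlled remainder in }w) + \partial_t(H(u^1)-H(u^2))$, where the remainder terms are $O(\|w\|^{\alpha})$ in the appropriate norm by the modulus-of-continuity estimates built into Definition~\ref{Def:admc}. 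The measurement identity $\mathcal{M}_{\mathscr{F}^1,\mathfrak{f}^1}(u^1|_\Sigma)=\mathcal{M}_{\mathscr{F}^2,\mathfrak{f}^2}(u^2|_\Sigma)$ supplies Cauchy data for $w$ on $\Sigma$ (the field $u^1=u^2$ and the modified Neumann trace $\partial_\nu u^1 + h_{\BF^1} = \partial_\nu u^2 + h_{\BF^2}$), which after integration by parts against the CGO test functions produces the flux balance condition \eqref{eq:fb} and, more importantly, an integral identity equating a weighted average of $\BF^1-\BF^2$ against the CGO phase to a quantity that is $O(\varepsilon^{\text{something}})$ plus lower-order contributions.

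The core of part (a) is then a quantitative unique-continuation / Runge-type argument in the thin direction. The geometric hypothesis on $\gamma$ — that points $a_1,a_2\in I$ with $|a_1-a_2|\le\varepsilon^l$ force the existence of $b_1,b_2\in(a_1,a_2)$ with $\gamma'(b_1)\parallel\gamma'(b_2)$ — is precisely what lets two CGO phases traveling in nearly-opposite directions along $\gamma$ be matched, so that their product localizes $\BF^1-\BF^2$ at a scale $\varepsilon^l$ along the curve while the transverse oscillation at frequency $\sim\varepsilon^{-1}$ extracts pointwise information from the cross-section. Balancing the oscillation gain $\varepsilon^{-1}$ against the localization loss $\varepsilon^{l}$, the Hölder continuity of $f^j$ (exponent $\alpha_2$) composed with that of $f_k^j$ (exponent $\alpha_3$), and the energy estimate for $w$ controlled via \eqref{Cond:uv}, yields the three-regime exponent $\tau$ in \eqref{eq:ta}: the first regime is dominated by the combined Hölder loss $\alpha_2\alpha_3 l$, the middle and last regimes by the $\tfrac12$-power coming from an interpolation (Cauchy--Schwarz) between the $L^2$ energy bound and the $C^{1,\alpha_4}$ bound, with the crossover at $l=\frac{1}{1+\alpha_2\alpha_3}$ occurring exactly where the two competing rates coincide. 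The tilde on $\tilde{\mathscr{F}^j}$ is the averaged/projected realization along $\gamma$ that these CGO solutions actually see, which is why the conclusion is stated for $\tilde{\mathscr{F}^1}-\tilde{\mathscr{F}^2}$ rather than the raw operators.

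For part (b), once $\mathscr{F}^1(u^1)=\mathscr{F}^2(u^2)$ and $h_{\BF^1}=h_{\BF^2}$ the flux divergence drops out of the subtracted equation, leaving $\mu\Delta w - \partial_t(H(u^1)-H(u^2)) = f^1(\cdot,u^1,\nabla u^1) - f^2(\cdot,u^2,\nabla u^2)$ with homogeneous Cauchy data on $\Sigma$. Here I would use the stronger $C^{1,\alpha_1}$ hypothesis \eqref{Cond:Hb} on $H$ to write $H(u^1)-H(u^2) = H'(u^1)w + O(|w|^{1+\alpha_1})$, absorb the $H'(u^1)w$ term, and then run a simpler CGO argument in which the geometric condition "every $\Bx\in\Omega$ is reachable from some $\Bx_0\in\hat\Gamma$ along the normal direction" guarantees that a single exponentially-decaying-in-the-thin-direction solution propagates the Cauchy data from $\hat\Gamma$ to the interior point. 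Optimizing the resulting estimate — where the Hölder exponent of $f$ no longer enters (only the $C^{1,\alpha_1}$ regularity of $H$ via $\alpha_1$ and the $C^{0,\alpha_3}$ bound on $\BF$ needed to control $w$) — gives $\tau_1 = \frac{\alpha_1\alpha_3^2}{1+\alpha_1\alpha_3}$. The main obstacle I anticipate is the construction and norm control of the CGO solutions for the \emph{nonlinear} balance law in the degenerate thin geometry: one must show these approximate solutions exist with the claimed concentration/decay and with remainder bounds that are uniform in $\varepsilon$ down to the scales $\varepsilon^l$ and $\varepsilon^{-1}$ used above, and reconcile the curve's $C^2$-regularity (needed to straighten $\gamma$ and keep the transported cross-sections non-degenerate) with the frequency budget — this is where the geometric hypotheses on $\gamma$ and on normal-reachability are doing the real work, and getting the exponents $\tau,\tau_1$ sharp requires tracking every loss through that construction.
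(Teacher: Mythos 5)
There is a genuine gap, and it sits exactly where you flag your ``main obstacle'': your plan hinges on constructing CGO solutions adapted to the linearized version of the nonlinear balance law \eqref{eq:main-mu} in the thin geometry, with remainder and concentration bounds uniform in $\varepsilon$. That construction is both unnecessary and, under the paper's hypotheses, not available: in part (a) the state transfer function $H$ is only $C^{0,\alpha_1}$ and the coefficients $\BF^j,f^j$ are merely H\"older, so there is no well-defined linearization to build CGOs for, and producing approximate solutions with controlled correctors at the scales $\varepsilon^l$, $\varepsilon^{-\beta}$ in a degenerating domain would be a serious open-ended task. The paper sidesteps this entirely: the ``CGO'' of Lemma \ref{lem:cgo} is the explicit exponential $u_0=e^{\rho\cdot\Bx/\sqrt{\mu}+\lambda t}$ in \eqref{u0}, an \emph{exact} solution of the constant-coefficient adjoint equation $(\partial_t+\mu\Delta)u_0=0$ from \eqref{eq:cgo}, used only as a test function. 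One writes $w=u^1-u^2$, applies the space-time Green formula (Lemma \ref{lem:green}) on a tiny box $D_\varepsilon\times[T_1,T_2]$ with $T_2-T_1=\varepsilon^2$, uses the flux balance \eqref{eq:fb} to cancel the lateral boundary contributions, and obtains the identity $I_1+I_2=I_3+I_4-I_5-I_6$ of Lemma \ref{lemma:I1-6}. The estimate then comes from freezing coefficients at a point (pointwise H\"older expansions, Lemma \ref{lemma:exp}), computing the exponential integrals explicitly, bounding one term from \emph{below} by $|(\tilde{\mathscr F}^1-\tilde{\mathscr F}^2)(\Bx_0,t_0)|\,s\,\varepsilon^{2+l}$ via the mean value theorem, and choosing $s=\varepsilon^{-\beta}$ with $\beta\le l$, $\beta<1$ as in \eqref{eq:beta}; the three regimes of $\tau$ arise from this constrained optimization, not from a Cauchy--Schwarz interpolation between an $L^2$ energy bound and the $C^{1,\alpha_4}$ bound as you suggest.

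Two further points in your sketch would not survive contact with the actual argument. First, the parallel-tangent hypothesis on $\gamma$ is not used to ``match two CGO phases traveling in nearly-opposite directions''; it is used to set up, around each point $b_1$, a translated-and-rotated coordinate frame and a subdomain $D_\varepsilon$ of length $\varepsilon^l$ whose two end cross-sections have parallel tangents, so that the domain can be exhausted by translations without re-rotating, and so that the rotated flux components $\tilde{\mathscr F}^j$ in Remark \ref{rmk:tilde-fg-forms} are consistently defined -- a single test function $u_0$ suffices. Second, the transverse frequency is not $\varepsilon^{-1}$; it is $s=\varepsilon^{-\beta}$ with $\beta<1$, and pushing to $\varepsilon^{-1}$ would destroy the smallness of the error terms such as $\mathcal O(s^3\varepsilon^{3+2l})$ in Lemma \ref{lemma:I1}. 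Your outline of part (b) (flux drops out, linearize $H$ using \eqref{Cond:Hb}, exploit normal reachability to control $w$ and $\partial_\nu w$ on the whole thin component, then rerun the identity with $I_3$ now carrying the lower bound) is essentially the paper's route and is fine in spirit, but it again needs only the explicit $u_0$, not a constructed CGO family.
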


\begin{remark}
   The studied system, described in Subsection \ref{sub:math}, models the evolution of a scalar state $u$ within $\Omega$ generated by a physical process, with measurements taken on the boundary $\Sigma$. The regularity imposed by assumptions \eqref{Cond:fg0}--\eqref{Cond:uv} and \eqref{Cond:H} (or \eqref{Cond:Hb}) is physically justifiable, as it requires only pointwise bounds—a condition met by many physical processes. This is consistent with the framework of local-time existence outlined in Remark~\ref{rem:n1}.

It is important to highlight that the result of the main theorem is non-trivial, even in the case of a linear source term $f^j$, namely, $f^j(\mathbf{x}, t, u^j, \nabla u^j) = q^j(\mathbf{x}, t) u^j$ with $q^j \in C^{0,\alpha_2}(\overline{Q})$. The same boundary observation data $(u^1,\partial_{\nu}u^1+h_{\BF^1})\big|_\Sigma=(u^2,\partial_{\nu}u^2+h_{\BF^2})\big|_\Sigma$ cannot guarantee that the second- or higher-order derivatives of $u^1$ and $u^2$ are pointwise close internally in the product-type domain.  



\end{remark}

\begin{remark}\label{rmk:tilde-fg-forms}
The realizations $\tilde{\mathscr{F}^1}(u^j)$ and $\tilde{\mathscr{F}^2}(u^j)$ in Theorem \ref{main:thm}$(a)$ are linear combinations of the components of $\mathscr{F}^1(u^j)$ and $\mathscr{F}^2(u^j)$, respectively, $j=1,2$. Their specific forms depend on the dimension and geometric configuration of  \(\Omega\), as follows:
\begin{itemize}
\item [(i)]In $\mathbb{R}^2$, the realization take the rotational form:
\begin{equation*} 
\tilde{\mathscr{F}}^j(u) :={\mathscr{F}}^{j\prime}_2(u),
\end{equation*}
where $\mathscr{F}^{j\prime}_2(u)$ is the second component of $\mathscr{F}^{j\prime}(u)$, obtained through coordinate transformation of $\mathscr{F}^j(u)$. The specific form of $\mathscr{F}^{\prime}(u)$  is given by
\begin{align}
 \qquad   \mathscr{F}^{j\prime}(u)(\Bx,t):= \BF^{j\prime}(\Bx,t,u^j) &= (f_1^{j\prime}, f_2^{j\prime})^{\top} = (f^j_1\cos\theta + f^j_2\sin\theta, -f^j_1\sin\theta + f^j_2\cos\theta)^{\top}.\label{eq:2FG}
\end{align}
\item [(ii)] In $\mathbb{R}^3$, the expressions vary based on the geometric configuration 
\begin{equation*}
\tilde{\mathscr{F}}^j(u) :={\mathscr{F}}^{j\prime}_k(u),
\end{equation*}
where $k =1, 3$ for nozzle domain $\mathscr{N}_{\varepsilon}^{N}$, and $k = 3$ for slab domain $\mathscr{N}_{\varepsilon}^{S}$. The transformed components $\mathscr{F}^{j\prime}(u):=\BF^{j\prime}(\Bx,t,u)=(f^{j\prime}_1,f^{j\prime}_2,f^{j\prime}_3)^\top$ are given by
\begin{align}\label{eq:3FG}
{\mathscr{F}}^{j\prime}_1(u)=f_1^{j\prime} &= f^j_1 \cos \beta \cos \theta + f^j_2 \sin \beta - f^j_3 \cos \beta \sin \theta, \notag\\
{\mathscr{F}}^{j\prime}_2(u)=f_2^{j\prime} &= -f^j_1 \sin \beta \cos \theta + f^j_2 \cos \beta + f^j_3 \sin \beta \sin \theta, \notag\\
{\mathscr{F}}^{j\prime}_3(u)=f_3^{j\prime} &= f^j_1 \sin \theta + f^j_3 \cos \theta.
\end{align}
\end{itemize}
\end{remark}
It is important to note that the rotation angles \(\theta\) and \(\beta\) in Remark \ref{rmk:tilde-fg-forms} are determined by the curve \(\gamma\) as specified in Theorem \ref{main:thm}$(a)$. The corresponding coordinate transformations are defined as follows:
\begin{itemize}
    \item[(i)] In \(\mathbb{R}^2\), the coordinate system is translated to a point on \(\gamma\) (corresponding to parameter \(b_1\)) and rotated clockwise about the origin by an angle \(\theta\), aligning the tangent vector \(\gamma'(b_1)\) with the \(x_1\)-axis of the new coordinate system. See Figure \ref{fg:01} for an illustration.
    \item[(ii)] In \(\mathbb{R}^3\), the coordinate system is translated to a point on \(\gamma\) (corresponding to \(b_1\)) and rotated as follows: first, a clockwise rotation by an angle \(\theta\) about the \(x_2\)-axis, followed by a clockwise rotation by an angle \(\beta\) about the \(x_3\)-axis. This ensures that \(\gamma\) lies in the \(x_2 x_3\)-plane and the tangent vector \(\gamma'(b_1)\) aligns with the \(x_2\)-axis of the new coordinate system. See Figures \ref{fg:02} and \ref{fg:03} for illustrations.
\end{itemize}

The proofs of Theorem \ref{main:thm} are presented in Sections \ref{sec:proof-2D} and \ref{sec:proof-3D}. To this end, we first introduce the auxiliary tools employed in the proof, focusing on complex geometrical optics solutions and key analytical constructions that play a fundamental role in our approach.

\begin{Lemma}[CGO Solutions]\label{lem:cgo}
The exponential function
\begin{equation}\label{u0}
    u_0(\Bx,t) = e^{\frac{\rho}{\sqrt{\mu}} \cdot \Bx + \lambda t}, \quad \rho = s\mathbf{d} + \Bi\sqrt{s^2 + \lambda}\mathbf{d}^\perp,
\end{equation}
where $\mathbf{d},\mathbf{d}^\perp \in \mathbb{S}^{n-1}$ satisfy $\mathbf{d} \cdot \mathbf{d}^\perp = 0$ and $\mu,s,\lambda \in \mathbb{R}_+$, has the properties:
\begin{equation}\label{eq:cgo}
    (\partial_t + \mu \Delta)u_0 = 0 \quad \text{and} \quad \partial_t u_0 = \lambda u_0.
\end{equation}
\end{Lemma}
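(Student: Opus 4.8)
The statement to prove is Lemma~\ref{lem:cgo}, which claims that the exponential function $u_0(\Bx,t) = e^{\frac{\rho}{\sqrt{\mu}}\cdot\Bx + \lambda t}$ with $\rho = s\mathbf{d} + \Bi\sqrt{s^2+\lambda}\,\mathbf{d}^\perp$ satisfies $(\partial_t + \mu\Delta)u_0 = 0$ and $\partial_t u_0 = \lambda u_0$.

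\textbf{Proof proposal.} This is a direct verification by differentiation, so the plan is simply to compute. First I would observe that $\partial_t u_0 = \lambda u_0$ is immediate from the chain rule, since the time dependence enters only through the factor $e^{\lambda t}$. Next, for the spatial part, I would note that $\nabla_{\Bx} u_0 = \frac{\rho}{\sqrt{\mu}} u_0$ and hence $\Delta u_0 = \left(\frac{\rho}{\sqrt{\mu}}\cdot\frac{\rho}{\sqrt{\mu}}\right) u_0 = \frac{\rho\cdot\rho}{\mu} u_0$, where $\rho\cdot\rho$ denotes the (bilinear, not Hermitian) dot product of the complex vector $\rho$ with itself. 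Therefore $(\partial_t + \mu\Delta)u_0 = \left(\lambda + \rho\cdot\rho\right)u_0$, and it remains to check that $\lambda + \rho\cdot\rho = 0$.

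The key computation is evaluating $\rho\cdot\rho$. Writing $\rho = s\mathbf{d} + \Bi\sqrt{s^2+\lambda}\,\mathbf{d}^\perp$, one gets
\begin{equation*}
\rho\cdot\rho = s^2(\mathbf{d}\cdot\mathbf{d}) + 2\Bi s\sqrt{s^2+\lambda}(\mathbf{d}\cdot\mathbf{d}^\perp) - (s^2+\lambda)(\mathbf{d}^\perp\cdot\mathbf{d}^\perp).
\end{equation*}
Invoking the hypotheses $\mathbf{d},\mathbf{d}^\perp\in\mathbb{S}^{n-1}$ (so $\mathbf{d}\cdot\mathbf{d} = \mathbf{d}^\perp\cdot\mathbf{d}^\perp = 1$) and $\mathbf{d}\cdot\mathbf{d}^\perp = 0$, the cross term vanishes and this collapses to $\rho\cdot\rho = s^2 - (s^2+\lambda) = -\lambda$. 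Substituting back, $(\partial_t + \mu\Delta)u_0 = (\lambda - \lambda)u_0 = 0$, which establishes the first identity; the second was already noted. This completes the proof.

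\textbf{Expected obstacle.} There is essentially no obstacle here; the only point requiring the slightest care is bookkeeping with the complex vector $\rho$ — specifically, using the non-conjugated bilinear form $\rho\cdot\rho$ (as dictated by the Laplacian acting on $e^{\frac{\rho}{\sqrt{\mu}}\cdot\Bx}$) rather than $|\rho|^2 = \rho\cdot\bar\rho$, and correctly tracking the factor of $\mu$ that cancels between the $\frac{1}{\sqrt{\mu}}$ in the exponent (contributing $\frac{1}{\mu}$ after squaring) and the $\mu$ multiplying the Laplacian. One should also implicitly note that such a $\mathbf{d}^\perp$ exists for $n\geq 2$, which is the relevant range ($n=2,3$) in the paper.
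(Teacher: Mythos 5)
Your verification is correct: the cancellation $\rho\cdot\rho = s^2 - (s^2+\lambda) = -\lambda$ (using the non-conjugated bilinear product) together with $\partial_t u_0 = \lambda u_0$ gives $(\partial_t+\mu\Delta)u_0=0$, which is exactly the direct computation underlying the lemma; the paper states it without proof precisely because this is the only, immediate argument. No gaps.
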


Another essential tool is the following Green's formula, which will be used extensively in our energy estimates:

\begin{Lemma}[Space-Time Green's Formula]\label{lem:green}
Let $\Omega \subset \mathbb{R}^n$ be a bounded Lipschitz domain and $[T_1,T_2] \subset [0,T]$ a time interval. For any functions $f,g$ in the space $H^1(\Omega \times [T_1,T_2])$ with $\Delta f,\,\Delta g \in L^2(\Omega \times [T_1,T_2])$ and $\partial_t f,\,\partial_t g \in L^2(\Omega \times [T_1,T_2])$, we have:
\begin{equation*}
    \int_{T_1}^{T_2} \int_\Omega (g\Delta f - f\Delta g) d\Bx dt = \int_{T_1}^{T_2} \int_{\partial\Omega} (g\partial_\nu f - f\partial_\nu g) d\sigma dt
\end{equation*}
with $\nu$ denoting the outward unit normal to $\partial\Omega$.
\end{Lemma}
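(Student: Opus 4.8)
\textbf{Proof proposal for Lemma~\ref{lem:green}.}

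The plan is to establish this space-time Green's formula by reducing it to the classical elliptic Green's identity applied at each fixed time slice, together with an integration in time. First I would recall that for $f,g \in H^1(\Omega \times [T_1,T_2])$ with $\Delta f,\Delta g \in L^2(\Omega \times [T_1,T_2])$, Fubini's theorem guarantees that for almost every $t \in [T_1,T_2]$ the slices $f(\cdot,t)$ and $g(\cdot,t)$ lie in $H^1(\Omega)$ with $\Delta f(\cdot,t),\Delta g(\cdot,t) \in L^2(\Omega)$. For such a slice the classical Green's formula on the bounded Lipschitz domain $\Omega$ gives
\begin{equation*}
    \int_\Omega \bigl(g(\cdot,t)\,\Delta f(\cdot,t) - f(\cdot,t)\,\Delta g(\cdot,t)\bigr)\, d\Bx = \int_{\partial\Omega} \bigl(g(\cdot,t)\,\partial_\nu f(\cdot,t) - f(\cdot,t)\,\partial_\nu g(\cdot,t)\bigr)\, d\sigma,
\end{equation*}
where the normal derivatives and the restrictions to $\partial\Omega$ are interpreted in the usual trace/duality sense: $\partial_\nu f(\cdot,t) \in H^{-1/2}(\partial\Omega)$ is the normal trace defined via the distributional identity $\langle \partial_\nu f, \phi\rangle = \int_\Omega (\Delta f)\,\Phi + \int_\Omega \nabla f \cdot \nabla \Phi$ for any $H^1$-extension $\Phi$ of $\phi \in H^{1/2}(\partial\Omega)$, and the boundary pairing $\int_{\partial\Omega} g\,\partial_\nu f\, d\sigma$ denotes the $H^{1/2}$--$H^{-1/2}$ duality. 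Then I would integrate this identity over $t \in [T_1,T_2]$; the measurability in $t$ of both sides follows from the regularity hypotheses, and integrability follows from the $L^2$ bounds on $\Delta f,\Delta g$ and the continuity of the trace operators, yielding the claimed formula.

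The role of the hypothesis $\partial_t f, \partial_t g \in L^2(\Omega \times [T_1,T_2])$ is to ensure that $f,g$ genuinely belong to a space for which the slice-wise argument and the time integration are legitimate (e.g. so that $f \in L^2([T_1,T_2];H^1(\Omega))$ with enough regularity that the trace identities are stable and the Fubini reduction is valid); I would note that no boundary terms at $t=T_1$ or $t=T_2$ appear because the differential operator on the left involves only the spatial Laplacian, so the time variable is a passive parameter. Alternatively, one could give a single-shot proof by taking $f,g$ smooth up to the closure of the space-time cylinder, applying the divergence theorem to the vector field $g\nabla_\Bx f - f\nabla_\Bx g$ on $\Omega \times [T_1,T_2]$ (whose divergence is exactly $g\Delta f - f\Delta g$ since the field has no time component), observing that the flux through the top and bottom faces $\{t=T_i\}$ vanishes, and then passing to the limit by density in the topology dictated by the stated hypotheses.

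The only real obstacle is the careful justification of the trace terms: since $f$ and $g$ are merely $H^1$ in space with $L^2$ Laplacian, their boundary values $\partial_\nu f, \partial_\nu g$ are not classical but must be handled as elements of $H^{-1/2}(\partial\Omega)$, and one must check that the duality pairing $\int_{\partial\Omega}(g\,\partial_\nu f - f\,\partial_\nu g)\,d\sigma$ is well-defined and that the time-integrated version makes sense. This is standard (it is the Lipschitz-domain version of the generalized Green formula, cf. the treatment in standard references on elliptic boundary value problems), so I would cite it rather than reprove it, and the density argument for the time integration is routine. Thus the proof is short: slice, apply classical Green on each slice, integrate in time.
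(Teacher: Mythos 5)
Your proposal is correct. Note that the paper itself offers no proof of Lemma~\ref{lem:green}: it is stated as a standard auxiliary tool, so there is no argument in the text to compare against; your slice-wise reduction (apply the generalized Green identity for $u\in H^1(\Omega)$ with $\Delta u\in L^2(\Omega)$ on the Lipschitz slice $\Omega$, interpret $\partial_\nu f(\cdot,t)\in H^{-1/2}(\partial\Omega)$ and the boundary integral as the $H^{1/2}$--$H^{-1/2}$ pairing, then integrate in $t$ using Cauchy--Schwarz for the integrability of $t\mapsto\langle \partial_\nu f(\cdot,t),g(\cdot,t)\rangle$) is exactly the standard justification one would supply, and your alternative space-time divergence-theorem argument with density is also fine. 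Two small points worth making explicit if you wrote this out: (i) the identification of the slice Laplacian with the slice of the space-time distributional Laplacian, i.e.\ that $\Delta_{\Bx}\bigl(f(\cdot,t)\bigr)=(\Delta f)(\cdot,t)$ for a.e.\ $t$, deserves the usual one-line argument with separated test functions $\varphi(\Bx)\psi(t)$ and a countable dense family of $\varphi$; and (ii) as you correctly observe, the hypothesis $\partial_t f,\partial_t g\in L^2$ plays no role in this purely spatial identity (it is relevant only to the way the lemma is used in \eqref{GreenF}, where time-derivative terms are integrated alongside it), so your proof rightly does not invoke it.
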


\section{Proof of Theorem \ref{main:thm} in two dimensions}\label{sec:proof-2D}

\subsection{Geometric Preparation}
To establish the foundation for proving Theorem \ref{main:thm} in the two-dimensional case, we first introduce the key geometric configuration. By the assumption of $\gamma$ in Theorem \ref{main:thm}$(a)$, there are many points on the curve with the same tangent direction, and the radial distance between adjacent points does not exceed $\varepsilon^l$. Choose any point, move the original coordinate system to this point and use this point as the new origin, then rotate the coordinate system as described in Remark \ref{rmk:tilde-fg-forms}. In the new coordinate system, a region of horizontal length $\varepsilon^l$ is intercepted in $\mathscr{N}_{\varepsilon}^{\ell}\Subset \Omega$ starting from the origin and defined as $D_{\varepsilon}$, see Figure \ref{fg:01} for illustration. 

For the remaining part $\mathscr{N}_{\varepsilon}^{\ell} \backslash D_{\varepsilon}$, we translate the origin to the next point without rotating the coordinate system and select a new subregion. Since coordinate translation does not alter the form of $\nabla_{\Bx} \cdot \BF^{\prime}$, we obtain the same result in the new subregion. Repeating this process a finite number of times covers the entire $\mathscr{N}_{\varepsilon}^{\ell}$. As for the remaining part $\Omega \backslash \overline{\mathscr{N}_{\varepsilon}^{\ell}}$, a similar method can be applied.


In the transformed coordinates, the flux terms become $\BF'$ and $\mathbf{G}'$, which are defined in \eqref{eq:2FG}. The boundary $\partial D_{\varepsilon}$ decomposes into four distinct components:
\begin{align}
    \Gamma_1 &= \{(x_1,x_2) \mid 0 < x_1 < \varepsilon^l,\ x_2 = \gamma(x_1)\}, \notag\\
    \Gamma_2 &= \{(x_1,x_2) \mid x_1 = 0,\ 0 < x_2 < \varepsilon \},\notag \\
    \Gamma_3 &= \{(x_1,x_2) \mid 0 < x_1 < \varepsilon^l,\ x_2 = \gamma(x_1)+\varepsilon\},\notag\\
    \Gamma_4 &= \{(x_1,x_2) \mid x_1 = \varepsilon^l,\ \gamma(\varepsilon^l) < x_2 < \gamma(\varepsilon^l) +\varepsilon \},\label{eq:bd2}
\end{align}
where $\gamma$ satisfy that
\begin{equation}\label{eq:gamm2}
    |\gamma|= \Oh(\varepsilon),
\end{equation}
and for simplicity, we denote
\begin{equation}\label{eq:l0}
    l_0 := \min\{l, 1\},
\end{equation} 
where $l$ is defined in Theorem \ref{main:thm}, part $(a)$.
\begin{figure}[htbp]
    \centering
    \begin{subfigure}[b]{0.45\textwidth}
        \centering
        \includegraphics[width=\linewidth]{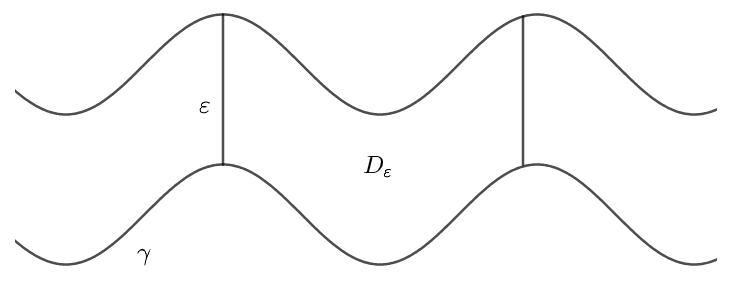}
        \caption{General nozzle configuration in 2D}
    \end{subfigure}
    \hfill
    \begin{subfigure}[b]{0.45\textwidth}
        \centering
        \includegraphics[width=\linewidth]{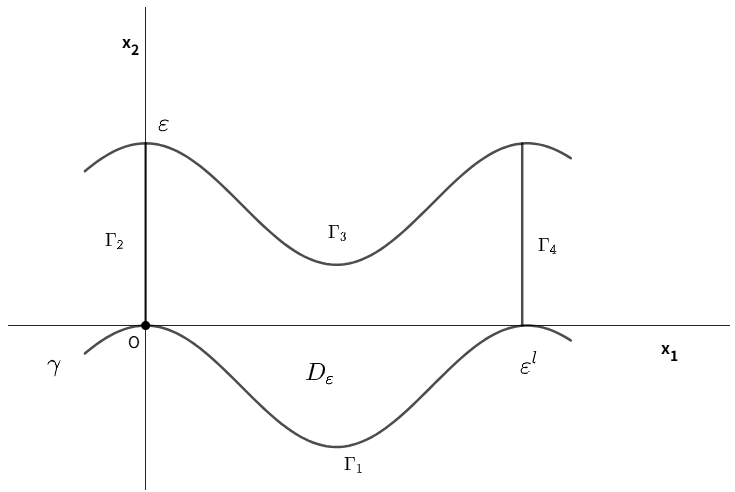}
        \caption{Transformed subdomain $D_\varepsilon$}
    \end{subfigure}
    \caption{ configuration for the 2D analysis showing (a) the original nozzle domain and (b) the transformed coordinate system with characteristic subdomain.}
    \label{fg:01}
\end{figure}



To prove the uniqueness of our main results, we derive key estimates for the solution pair of the general coupled PDE system \eqref{eq:uv}.

\begin{Lemma}\label{lem:w}
Let $\mathscr{N}_{\varepsilon}^{\ell}$ be a product-type domain, as specified in \eqref{eq:Ne}. Suppose that \( u, v \in H^{1}_{\mathrm{loc}}(\mathscr{N}_\varepsilon^{\ell}\times[0,T]) \cap C^{1,\alpha_4}(\overline{\mathscr{N}_\varepsilon^{\ell}}\times[0,T])\) with $\alpha_4 \in (0,1)$ satisfy \eqref{Cond:uv} and the coupled system
\begin{equation}\label{eq:uv}   
\begin{cases}        
\partial_t H(u) - \mu \Delta u = f - \nabla_{\Bx} \cdot \BF^{\prime}  & \text{in } \mathscr{N}_\varepsilon^{\ell} \times [0,T], \\   
\partial_t H(v) - \mu \Delta v = g - \nabla_{\Bx} \cdot \mathbf{G}^{\prime}  & \text{in } \mathscr{N}_\varepsilon^{\ell} \times [0,T], \\               
u = v, \quad \partial_{\nu}u = \partial_{\nu}v + h(\Bx,t) & \text{on } \hat{\Gamma}\times [0,T],
\end{cases}
\end{equation}
where $\hat{\Gamma}$ is defined in \eqref{def:Gamma}. The functions \( f \) and \( g \) satisfy  \eqref{Cond:fg0}, while the components of functions \( \mathbf{F}' \) and \( \mathbf{G}' \) satisfy  \eqref{Cond:FG}. \( H \) satisfies \eqref{Cond:H} or \eqref{Cond:Hb}, and $h \in L^2(\mathscr{N}_\varepsilon^{\ell}\times[0,T]) \cap C^{0,\alpha_4}(\overline{\mathscr{N}_\varepsilon^{\ell}}\times[0,T])$. Let $w = u - v$. Then $w$ satisfies
\begin{equation}\label{main:eqw}    
\begin{cases}        
\partial_t (H(u) - H(v)) - \mu \Delta w = (f - g) - \nabla_{\Bx} \cdot (\BF^{\prime} - \mathbf{G}^{\prime}) & \text{in } \mathscr{N}_\varepsilon^{\ell} \times [0,T], \\               
w = 0, \quad \partial_{\nu}w = h(\Bx,t) & \text{on } \hat{\Gamma} \times [0,T].
\end{cases}
\end{equation}
\end{Lemma}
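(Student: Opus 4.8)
The plan is to obtain \eqref{main:eqw} by a direct termwise subtraction of the two coupled equations in \eqref{eq:uv}; the only substantive point is to record why the regularity hypotheses make each manipulation legitimate, so what follows is careful bookkeeping rather than genuine analysis.

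First I would interpret the two interior equations of \eqref{eq:uv} in the distributional sense on $\mathscr{N}_\varepsilon^\ell \times (0,T)$. Since $\partial_t$ and $\Delta$ are linear, subtracting the $v$-equation from the $u$-equation gives
\[
\partial_t\bigl(H(u)-H(v)\bigr) - \mu\Delta w = (f-g) - \nabla_{\Bx}\cdot(\BF' - \mathbf{G}'),
\]
which is precisely the first line of \eqref{main:eqw}. Here I would note that $H$ is Hölder continuous, $H\in C^{0,\alpha_1}$ under \eqref{Cond:H} (or $C^{1,\alpha_1}$ under \eqref{Cond:Hb}), which together with $u,v \in C^{1,\alpha_4}(\overline{\mathscr{N}_\varepsilon^\ell}\times[0,T])$ makes $H(u)-H(v)$ a continuous function whose distributional time derivative is well defined, and that the two interior equations then force $\Delta w = \Delta u - \Delta v$ and $\partial_t(H(u)-H(v))$ to lie in the same class as $(f-g) - \nabla_{\Bx}\cdot(\BF' - \mathbf{G}')$, the latter being controlled by the admissibility bounds \eqref{Cond:fg0} and \eqref{Cond:FG} on $f, g$ and on the components of $\BF', \mathbf{G}'$.

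Next I would treat the boundary data on $\hat{\Gamma}\times[0,T]$, with $\hat\Gamma$ as in \eqref{def:Gamma}. The hypothesis $u = v$ there yields the Dirichlet trace $w = u - v = 0$, and $\partial_\nu u = \partial_\nu v + h$ yields the Neumann trace $\partial_\nu w = \partial_\nu u - \partial_\nu v = h$; both subtractions are justified because $u, v \in C^{1,\alpha_4}(\overline{\mathscr{N}_\varepsilon^\ell}\times[0,T])$ guarantees that the pointwise boundary values and outward normal derivatives of $u$ and $v$, hence of $w$, are well defined and Hölder continuous, while $h \in L^2(\mathscr{N}_\varepsilon^\ell\times[0,T]) \cap C^{0,\alpha_4}(\overline{\mathscr{N}_\varepsilon^\ell}\times[0,T])$ by assumption. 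Assembling the interior identity with these two trace identities yields \eqref{main:eqw}. The main --- and only mild --- obstacle is thus to verify that the assumed regularity classes are mutually consistent under the equations, i.e.\ that one may simultaneously differentiate $H(u)-H(v)$ in time and take the two traces of $w$; once this is in place the conclusion is immediate.
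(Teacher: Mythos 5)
Your proof is correct and coincides with the paper's (implicit) treatment: the lemma follows by straightforward subtraction of the two equations in \eqref{eq:uv} using linearity of $\partial_t$ and $\Delta$, together with subtraction of the boundary traces, exactly as you argue; the paper states the lemma without further proof precisely because this is immediate. Your added remarks on why the regularity hypotheses make the distributional subtraction and the trace identities legitimate are consistent with the assumptions \eqref{Cond:fg0}--\eqref{Cond:uv} and \eqref{Cond:H}/\eqref{Cond:Hb}.
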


\begin{Lemma}\label{lemma:exp}
Under the same assumptions on $f, g$, $\BF^{\prime}$, $\mathbf{G}^{\prime}$ as in Lemma \ref{lem:w}, let $\BF^{\prime} = (f_1^{\prime}, f_2^{\prime})^{\top}$ and $\mathbf{G}^{\prime} = (g_1^{\prime}, g_2^{\prime})^{\top}$ be defined by \eqref{eq:2FG}. For any point $(\Bx,t) \in \overline{D}_{\varepsilon} \times [T_1, T_2] \subset \mathscr{N}_{\varepsilon}^{\ell} \times [0,T]$, with $T_2 - T_1 = \varepsilon^2$ and $\varepsilon$ being the geometric parameter, the following expansions hold
\begin{align*}
f(\Bx, t, u, \nabla u) - g(\Bx, t, v, \nabla v)
&= (f - g)_{(\Bx,t)=(\Bx_0,t_0)} + \delta_{f}f - \delta_{g}g \\
&\coloneqq (f - g)_{(\Bx,t)=(\Bx_0,t_0)} + \delta_0(f - g), \\
f_k^{\prime}(\Bx, t, u) - g_k^{\prime}(\Bx, t, v)
&= (f_k^{\prime} - g_k^{\prime})_{(\Bx,t)=(\Bx_0,t_0)} + \delta_{f_k^{\prime}} f_k^{\prime} - \delta_{g_k^{\prime}} g_k^{\prime} \\
&\coloneqq (f_k^{\prime} - g_k^{\prime})_{(\Bx,t)=(\Bx_0,t_0)} + \delta_{k}(f_k^{\prime} - g_k^{\prime}).
\end{align*}
Furthermore, the remainder terms satisfy the estimates:
\begin{equation*}
\begin{aligned}
|\delta_0(f - g)| &\leq C(C_2, C_4) \big(|\Bx - \Bx_0| + |t - t_0|^{1/2}\big)^{\alpha_2 \alpha_4}, \\
|\delta_k(f_k^{\prime} - g_k^{\prime})| &\leq C(C_3, C_4) \big(|\Bx - \Bx_0| + |t - t_0|^{1/2}\big)^{\alpha_3 \alpha_4},
\end{aligned}
\end{equation*}
where $(\Bx_0, t_0) \in \overline{D}_{\varepsilon} \times [T_1, T_2]$, $|\cdot|$ denotes the Euclidean distance, $\alpha_i \in (0,1)$ for $i = 2,3,4$, and the constants $C_i$ ($i = 2,3,4$) are as defined in \eqref{Cond:fg0}, \eqref{Cond:FG}, and \eqref{Cond:uv}, respectively.
\end{Lemma}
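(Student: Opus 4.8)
The statement to prove, Lemma~\ref{lemma:exp}, is essentially a quantitative Taylor-type expansion of the differences $f(\Bx,t,u,\nabla u)-g(\Bx,t,v,\nabla v)$ and $f_k^\prime(\Bx,t,u)-g_k^\prime(\Bx,t,v)$ around a base point $(\Bx_0,t_0)$, together with H\"older-modulus control on the remainder in terms of the parabolic distance $|\Bx-\Bx_0|+|t-t_0|^{1/2}$ on the thin subdomain $\overline D_\varepsilon\times[T_1,T_2]$.

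\medskip

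\noindent\textbf{Plan.} The idea is to decompose each difference additively: write
\[
f(\Bx,t,u,\nabla u)-g(\Bx,t,v,\nabla v)
=\bigl(f-g\bigr)\big|_{(\Bx_0,t_0)}+\underbrace{\bigl[f(\Bx,t,u,\nabla u)-f(\Bx_0,t_0,u(\Bx_0,t_0),\nabla u(\Bx_0,t_0))\bigr]}_{=:\,\delta_f f}-\underbrace{\bigl[g(\Bx,t,v,\nabla v)-g(\Bx_0,t_0,v(\Bx_0,t_0),\nabla v(\Bx_0,t_0))\bigr]}_{=:\,\delta_g g},
\]
which is a tautological identity, so the only content is the remainder estimate. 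Here I read $(f-g)\big|_{(\Bx_0,t_0)}$ as shorthand for $f(\Bx_0,t_0,u(\Bx_0,t_0),\nabla u(\Bx_0,t_0))-g(\Bx_0,t_0,v(\Bx_0,t_0),\nabla v(\Bx_0,t_0))$. First I would bound $|\delta_f f|$ by splitting the increment into its four coordinate contributions and applying the H\"older bounds from Definition~\ref{Def:admc}(ii) (quantified by \eqref{Cond:fg0}): the spatial increment contributes $\lesssim C_2|\Bx-\Bx_0|^{\alpha_2}$, the temporal increment $\lesssim C_2|t-t_0|^{\alpha_2/2}$, and the increments in the $u$-slot and the $\nabla u$-slot contribute $\lesssim C_2\,|u(\Bx,t)-u(\Bx_0,t_0)|^{\alpha_2}$ and $\lesssim C_2\,|\nabla u(\Bx,t)-\nabla u(\Bx_0,t_0)|^{\alpha_2}$ respectively. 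Then I invoke \eqref{Cond:uv}, $u\in C^{1,\alpha_4}$ with $\|u\|_{C^{1,\alpha_4}}\le C_4$, to control $|u(\Bx,t)-u(\Bx_0,t_0)|\lesssim C_4(|\Bx-\Bx_0|+|t-t_0|)\lesssim C_4(|\Bx-\Bx_0|+|t-t_0|^{1/2})$ (using $|t-t_0|\le\varepsilon^2\le1$ so $|t-t_0|\le|t-t_0|^{1/2}$) and $|\nabla u(\Bx,t)-\nabla u(\Bx_0,t_0)|\lesssim C_4(|\Bx-\Bx_0|^{\alpha_4}+|t-t_0|^{\alpha_4/2})\lesssim C_4(|\Bx-\Bx_0|+|t-t_0|^{1/2})^{\alpha_4}$. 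Feeding these back, the dominant power is governed by the $\nabla u$-slot term, which yields the composite exponent $\alpha_2\alpha_4$, and all other contributions carry at least that power of the parabolic distance once we normalize by the diameter bound $|\Bx-\Bx_0|+|t-t_0|^{1/2}\le C\varepsilon^{l_0}\ll1$; hence $|\delta_f f|\le C(C_2,C_4)(|\Bx-\Bx_0|+|t-t_0|^{1/2})^{\alpha_2\alpha_4}$. The same argument applied to $\delta_g g$ (with $v$ in place of $u$, using that $v$ also satisfies \eqref{Cond:uv}) gives the identical bound, and combining the two and absorbing constants gives $|\delta_0(f-g)|\le C(C_2,C_4)(|\Bx-\Bx_0|+|t-t_0|^{1/2})^{\alpha_2\alpha_4}$.

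\medskip

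\noindent\textbf{The flux terms.} For $f_k^\prime(\Bx,t,u)-g_k^\prime(\Bx,t,v)$ the argument is structurally the same but simpler, since the flux components depend only on $(\Bx,t,u)$ and not on $\nabla u$. I first note that $f_k^\prime,g_k^\prime$, being the rotated combinations \eqref{eq:2FG} of the components $f_i,g_i$, inherit $C^{0,\alpha_3}$ regularity with norm controlled by $C_3$ (up to a fixed constant depending only on the rotation angle $\theta$, which is harmless). Then the increment $\delta_{f_k^\prime}f_k^\prime$ splits into a spatial piece $\lesssim C_3|\Bx-\Bx_0|^{\alpha_3}$, a temporal piece $\lesssim C_3|t-t_0|^{\alpha_3/2}$, and a state-slot piece $\lesssim C_3|u(\Bx,t)-u(\Bx_0,t_0)|^{\alpha_3}\lesssim C_3 C_4^{\alpha_3}(|\Bx-\Bx_0|+|t-t_0|^{1/2})^{\alpha_3}$ — each of which is $\le C(C_3,C_4)(|\Bx-\Bx_0|+|t-t_0|^{1/2})^{\alpha_3\alpha_4}$ after again using that the parabolic distance is $\le C\varepsilon^{l_0}\ll1$ so the larger exponent $\alpha_3$ dominates $\alpha_3\alpha_4$ (note $\alpha_4<1$). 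Doing the same for $\delta_{g_k^\prime}g_k^\prime$ and combining gives $|\delta_k(f_k^\prime-g_k^\prime)|\le C(C_3,C_4)(|\Bx-\Bx_0|+|t-t_0|^{1/2})^{\alpha_3\alpha_4}$.

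\medskip

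\noindent\textbf{Main obstacle.} There is no deep difficulty here; the lemma is a bookkeeping exercise in composing H\"older moduli. The one point that needs care — and which I regard as the crux — is the correct tracking of the \emph{composite} exponents: the H\"older-$\alpha_2$ (resp.\ $\alpha_3$) dependence of $f$ (resp.\ $f_k^\prime$) in the $u$- and $\nabla u$-slots must be composed with the $C^{1,\alpha_4}$ regularity of $u,v$, and one must verify that the \emph{smallest} resulting exponent is exactly $\alpha_2\alpha_4$ (resp.\ $\alpha_3\alpha_4$) and that every other term carries at least that power, which relies on the normalization $|\Bx-\Bx_0|+|t-t_0|^{1/2}\lesssim\varepsilon^{l_0}\ll1$ to convert higher powers into the stated lower power at the cost of an $\varepsilon$-independent constant. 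A secondary subtlety is the mixed-H\"older convention for the time variable (exponent $\alpha_i/2$ in $t$, as reflected in Definition~\ref{Def:admc}(ii)); using $|t-t_0|\le\varepsilon^2$ one passes freely between $|t-t_0|$ and $|t-t_0|^{1/2}$ and groups everything under the single parabolic modulus $|\Bx-\Bx_0|+|t-t_0|^{1/2}$. Once these normalizations are fixed, assembling the bounds and reading off the constants' dependence ($C(C_2,C_4)$ and $C(C_3,C_4)$, both $\varepsilon$-independent) completes the proof.
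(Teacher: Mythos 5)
Your proposal is correct, and it supplies exactly the standard argument the paper leaves implicit (the lemma is stated without proof there): the decomposition is tautological, and the remainder bound follows by composing the slot-wise H\"older moduli of $f,g$ (resp.\ $f_k^{\prime},g_k^{\prime}$) from \eqref{Cond:fg0}--\eqref{Cond:FG} with the $C^{1,\alpha_4}$ regularity \eqref{Cond:uv} of $u,v$, using the smallness of the parabolic distance on $\overline{D}_\varepsilon\times[T_1,T_2]$ to absorb the higher-exponent terms into the stated exponents $\alpha_2\alpha_4$ and $\alpha_3\alpha_4$, which matches how the paper invokes these expansions in its later estimates.
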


These expansions will enable us to carefully track the dependence on geometric parameter $\varepsilon$ in our estimates. With these preparatory results established, we now derive a key integral identity that connects boundary behavior to interior quantities. This identity, which follows from careful application of Green's theorem, will be central to our analysis.
\begin{Lemma}\label{lemma:I1-6}
Consider the nonlinear system \eqref{main:eqw} and under the same setup as in Lemma~\ref{lem:w}, using the CGO solution $u_0$ from \eqref{u0} with direction $\mathbf{d} = (d_1,d_2)$ where $d_1,d_2<0$, and for any time interval $[T_1,T_2] \subset [0,T]$ with $T_2-T_1=\varepsilon^2$, we have the identity:
\begin{equation}\label{I1-6}
    I_1 + I_2 = I_3 +I_4 -I_5 -I_6,
\end{equation}
where
\begin{align*}
        I_1 =& \mu \int_{T_1}^{T_2} \int_{\Gamma_2 \cup \Gamma_4} (w \partial_{\nu} u_0 - u_0 \partial_{\nu} w) \mathrm{d} \sigma \mathrm{d} t, 
        &&I_2 = \int_{T_1}^{T_2} \int_{\Gamma_2 \cup \Gamma_4}\nu \cdot (\BF^{\prime} -\mathbf{G}^{\prime}) u_0\mathrm{d} \sigma \mathrm{d} t,\nonumber\\
        I_3 =&  \int_{T_1}^{T_2} \int _{D_{\varepsilon}} (f - g) u_0 \mathrm{d} \Bx \mathrm{d} t,    &&I_4 = \int_{T_1}^{T_2} \int _{D_{\varepsilon}} (\BF^{\prime} -\mathbf{G}^{\prime}) \cdot \nabla u_0 \mathrm{d} \Bx \mathrm{d} t,\\
        I_5 =&  \int_{D_{\varepsilon}}(H(u)-H(v))u_0|_{T_1}^{T_2} \mathrm{d} \Bx,
       && I_6 = \int_{T_1}^{T_2} \int _{D_{\varepsilon}}\lambda (w -(H(u)-H(v))) u_0\mathrm{d} \Bx \mathrm{d} t.\nonumber
\end{align*}
\end{Lemma}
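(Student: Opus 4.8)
The plan is to obtain \eqref{I1-6} by testing the equation for $w$ in \eqref{main:eqw} against the CGO solution $u_0$ of \eqref{u0} and integrating by parts in both space and time, exploiting the two structural identities for $u_0$ in Lemma~\ref{lem:cgo} and the space-time Green's formula of Lemma~\ref{lem:green}. Concretely, multiply the first line of \eqref{main:eqw} by $u_0$ and integrate over $D_{\varepsilon}\times[T_1,T_2]$. Since $u_0$ is smooth (indeed analytic), every pairing below is well defined: the merely Hölder regularity of $H,\BF',\mathbf{G}',f,g$ is accommodated by reading \eqref{main:eqw} in the weak sense inherited from Lemma~\ref{lem:w} and passing to the limit through a routine density/mollification argument with the $C^\infty$ test function $u_0$; the direction condition $d_1,d_2<0$ plays no role here and is needed only for the subsequent estimates.

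On the left-hand side, integrate the time-derivative term by parts in $t$: this produces the time-slice term $I_5=\int_{D_{\varepsilon}}(H(u)-H(v))u_0\big|_{T_1}^{T_2}\,\mathrm{d}\Bx$ together with $-\int_{T_1}^{T_2}\!\int_{D_{\varepsilon}}(H(u)-H(v))\,\partial_t u_0$, into which I substitute $\partial_t u_0=\lambda u_0$ from \eqref{eq:cgo}. For the diffusion term $-\mu\int\!\int(\Delta w)u_0$, apply Green's formula (Lemma~\ref{lem:green}) on $D_{\varepsilon}$ for a.e.\ fixed $t$ to transfer the Laplacian onto $u_0$; using $\mu\Delta u_0=-\partial_t u_0=-\lambda u_0$ (again \eqref{eq:cgo}), the interior contribution becomes $+\lambda\int\!\int w\,u_0$, so the two interior remainders combine exactly into $I_6=\lambda\int_{T_1}^{T_2}\!\int_{D_{\varepsilon}}\big(w-(H(u)-H(v))\big)u_0$, leaving a boundary term $-\mu\int_{T_1}^{T_2}\!\int_{\partial D_{\varepsilon}}(u_0\partial_\nu w-w\partial_\nu u_0)$. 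On the right-hand side, $I_3$ appears immediately, while for the flux divergence I integrate by parts in space (divergence theorem, for a.e.\ fixed $t$) to produce $I_4=\int_{T_1}^{T_2}\!\int_{D_{\varepsilon}}(\BF'-\mathbf{G}')\cdot\nabla u_0$ and a boundary term $-\int_{T_1}^{T_2}\!\int_{\partial D_{\varepsilon}}\nu\cdot(\BF'-\mathbf{G}')u_0$.

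The final step is to split $\partial D_{\varepsilon}=(\Gamma_1\cup\Gamma_3)\cup(\Gamma_2\cup\Gamma_4)$ according to \eqref{eq:bd2}. On the interior cut faces $\Gamma_2\cup\Gamma_4$ no extra information is available, and the two boundary pieces assemble precisely into $I_1$ and $I_2$. On the lateral faces $\Gamma_1\cup\Gamma_3\subset\hat{\Gamma}$ I use the boundary conditions of \eqref{main:eqw}, $w=0$ and $\partial_\nu w=h$: the diffusion boundary term collapses to $-\mu\int_{T_1}^{T_2}\!\int_{\Gamma_1\cup\Gamma_3}u_0\,h$, and the flux boundary term is $-\int_{T_1}^{T_2}\!\int_{\Gamma_1\cup\Gamma_3}\nu\cdot(\BF'-\mathbf{G}')u_0$. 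Equating the two sides and rearranging gives \eqref{I1-6} up to the single residual $\mu\int_{\Gamma_1\cup\Gamma_3}u_0\,h-\int_{\Gamma_1\cup\Gamma_3}\nu\cdot(\BF'-\mathbf{G}')u_0$, which vanishes exactly by the flux balance condition \eqref{eq:fb} relating $h$ to the normal trace of $\BF^1-\BF^2$ on the lateral boundary. This cancellation — the reason the stated identity carries no $\Gamma_1\cup\Gamma_3$ contributions — is the one place where the argument is not purely mechanical: it requires invoking \eqref{eq:fb} (equivalently, the defining relation for $h_{\BF}$) in its pointwise, $u_0$-weighted form along $\hat{\Gamma}$, not merely as an integrated identity. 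All remaining work is bookkeeping of signs and of which boundary faces carry measurement data.
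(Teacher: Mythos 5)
Your proposal is correct and is essentially the paper's own argument: the paper likewise pairs the equation for $w$ with $u_0$, computes the resulting integral two ways — once via the space--time Green's formula of Lemma~\ref{lem:green} together with $(\partial_t+\mu\Delta)u_0=0$ and $\partial_t u_0=\lambda u_0$ (producing $I_1$, $I_5$, $I_6$ and the lateral term $-\mu\int_{\Gamma_1\cup\Gamma_3}u_0h$), and once by substituting the right-hand side and applying the divergence theorem (producing $I_3$, $I_4$, $-I_2$ and, via \eqref{eq:fb}, the same lateral term) — and then equates the two so that the $\Gamma_1\cup\Gamma_3$ contributions cancel. Your remark that this cancellation really requires the flux balance relation in its pointwise, $u_0$-weighted form on $\hat{\Gamma}$, rather than only the integrated identity \eqref{eq:fb}, is accurate and corresponds to what the paper uses implicitly in \eqref{DivF}.
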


\begin{proof}
By applying Lemma \ref{lem:green}, the CGO solution in \eqref{eq:cgo}, the geometry in \eqref{eq:bd2}, and the boundary conditions from \eqref{main:eqw}, we obtain
\begin{align}\label{GreenF}
    &\int_{T_1}^{T_2} \int _{D_{\varepsilon}} (\partial_t (H(u)-H(v)) - \mu \Delta w)u_0 + (\partial_t u_0 + \mu \Delta u_0)w  \mathrm{d} \Bx \mathrm{d} t\nonumber\\
    = & \int_{T_1}^{T_2} \int_{\partial D_{\varepsilon}}\mu (w \partial_{\nu} u_0 - u_0 \partial_{\nu} w) \mathrm{d} \sigma \mathrm{d} t + \int_{D_{\varepsilon}}(H(u)-H(v))u_0|_{T_1}^{T_2} \mathrm{d} \Bx\nonumber\\
    &+ \int_{T_1}^{T_2} \int _{D_{\varepsilon}}(w -(H(u)-H(v)))\partial_t u_0\mathrm{d} \Bx \mathrm{d} t\nonumber\\
    = & \mu \int_{T_1}^{T_2} \big[\int_{\Gamma_2 \cup \Gamma_4} (w \partial_{\nu} u_0 - u_0 \partial_{\nu} w )\mathrm{d} \sigma- \int_{\Gamma_1 \cup \Gamma_3} u_0 h(u,v) \mathrm{d} \sigma \big]\mathrm{d} t  \notag\\ &+ \int_{D_{\varepsilon}}(H(u)-H(v))u_0|_{T_1}^{T_2} \mathrm{d} \Bx
      + \int_{T_1}^{T_2} \int _{D_{\varepsilon}}\lambda (w -(H(u)-H(v))) u_0\mathrm{d} \Bx \mathrm{d} t.
\end{align}

On the other hand, using \eqref{eq:fb}, \eqref{eq:cgo}, and \eqref{main:eqw}, we can drive that 
\begin{align}\label{DivF}
        &\int_{T_1}^{T_2} \int _{D_{\varepsilon}} (\partial_t (H(u)-H(v)) - \mu \Delta w)u_0 + (\partial_t u_0 + \mu \Delta u_0)w  \mathrm{d} \Bx \mathrm{d} t\nonumber\\
        = & \int_{T_1}^{T_2} \int _{D_{\varepsilon}} (\partial_t (H(u)-H(v)) - \mu \Delta w)u_0 \mathrm{d} \Bx \mathrm{d} t\nonumber\\
        =& \int_{T_1}^{T_2} \int _{D_{\varepsilon}} (f - g) u_0 \mathrm{d} \Bx \mathrm{d} t -  \int_{T_1}^{T_2} \int _{D_{\varepsilon}} \nabla_{\Bx} \cdot (\BF^{\prime} -\mathbf{G}^{\prime}) u_0 \mathrm{d} \Bx \mathrm{d} t\nonumber\\
        =& \int_{T_1}^{T_2} \int _{D_{\varepsilon}} (f - g) u_0 \mathrm{d} \Bx \mathrm{d} t - \int_{T_1}^{T_2} \int _{\partial D_{\varepsilon}} \nu \cdot (\BF^{\prime} -\mathbf{G}^{\prime}) u_0 \mathrm{d} \sigma \mathrm{d} t \nonumber\\
        &+ \int_{T_1}^{T_2} \int _{D_{\varepsilon}} (\BF^{\prime} -\mathbf{G}^{\prime}) \cdot \nabla u_0 \mathrm{d} \Bx \mathrm{d} t\nonumber\\
        =& \int_{T_1}^{T_2} \int _{D_{\varepsilon}} (f - g) u_0 \mathrm{d} \Bx \mathrm{d} t + \int_{T_1}^{T_2} \int _{D_{\varepsilon}} (\BF^{\prime} -\mathbf{G}^{\prime}) \cdot \nabla u_0 \mathrm{d} \Bx \mathrm{d} t\nonumber\\
        &- \int_{T_1}^{T_2} \big[\int_{\Gamma_2 \cup \Gamma_4}\nu \cdot (\BF^{\prime} -\mathbf{G}^{\prime}) u_0\mathrm{d} \sigma + \mu \int_{\Gamma_1 \cup \Gamma_3} u_0 h(u,v) \mathrm{d} \sigma  \big]\mathrm{d} t.
\end{align}
Equating expressions \eqref{GreenF} and \eqref{DivF} and analyzing the boundary terms yields the desired identity \eqref{I1-6}.
\end{proof}

These terms will be carefully estimated in the following process to establish our main results. For the CGO solution $u_0$ defined in \eqref{u0} in two dimensions, we take $\mathbf{d} = (d_1,d_2)$ with $d_1,d_2 <0$ and $\mathbf{d}^{\bot} = (d_2,-d_1)$, then we get 
    \begin{align}
        u_0(\Bx,t) &= e^{\lambda t} e^{\mu^{-\frac{1}{2}}(sd_1 + \Bi \sqrt{s^2 + \lambda}d_2)x_1  + \mu^{-\frac{1}{2}}(sd_2 - \Bi \sqrt{s^2 + \lambda}d_1)x_2 },\notag\\
        \nabla u_0(\Bx,t) &= \mu^{-\frac{1}{2}} (sd_1 + \Bi \sqrt{s^2 + \lambda}d_2,sd_2 - \Bi \sqrt{s^2 + \lambda}d_1)^{\top} u_0(\Bx,t).\label{eq:cgo1}
    \end{align}

In the subsequent analysis, we consider $s = \varepsilon^{-\beta}$ for some $\beta>0$ satisfying the conditions
\begin{equation}\label{eq:beta}
\beta \leq l \, \text{~and~} \, \beta <1.
\end{equation}

\begin{Lemma}\label{lemma:I1}
   The boundary integral term $I_1$ defined in \eqref{I1-6} admits the following asymptotic estimate:
    \begin{equation}\label{I1}
       |I_1| \leq C(\mu,\|h\|_{L^\infty},C_4)(s^2\varepsilon^{3+l} + s^2\varepsilon^{2+l+(1+\alpha_4)l_0} +\varepsilon^{2+\alpha_4 l_0})+ \mathcal{O}(s^3\varepsilon^{3+2l})+ \mathcal{O}(s \varepsilon^{3+\alpha_4 l_0}),
    \end{equation}
    where 
    $l_0$ is defined in \eqref{eq:l0}, and positive constant $ C(\mu,\|h\|_{L^\infty},C_4)$ depends only on $\mu$, $\|h\|_{L^\infty}$, and $C_4$.
\end{Lemma}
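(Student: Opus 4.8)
The plan is to estimate $I_1 = \mu \int_{T_1}^{T_2}\int_{\Gamma_2\cup\Gamma_4}(w\partial_\nu u_0 - u_0\partial_\nu w)\,\mathrm{d}\sigma\,\mathrm{d}t$ by treating the two boundary pieces $\Gamma_2$ (the segment $x_1=0$, $0<x_2<\varepsilon$) and $\Gamma_4$ (the segment $x_1=\varepsilon^l$, shifted vertically by $\gamma(\varepsilon^l)=\Oh(\varepsilon)$) separately, since on each of them the outward normal is horizontal and the explicit formulas \eqref{eq:cgo1} for $u_0$ and $\nabla u_0$ apply. The key point is that $w$ vanishes on the lateral boundary $\hat\Gamma$ (Lemma \ref{lem:w}), so on the near-endpoints of $\Gamma_2,\Gamma_4$ we can use the $C^{1,\alpha_4}$ bound on $w$ together with $w=0$ on $\hat\Gamma$ to get $|w|\lesssim C_4\,\mathrm{dist}(\cdot,\hat\Gamma)$ and, more to the point, a refined pointwise bound of order $\varepsilon$ (the transversal width) plus a Hölder correction — this is where the mysterious exponent $\alpha_4 l_0$ enters. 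For $\partial_\nu w$ on these segments we use the Hölder continuity of $\nabla w$ and the value $\partial_\nu w = h$ on the adjacent lateral boundary pieces, giving $|\partial_\nu w|\lesssim \|h\|_{L^\infty} + C_4 \varepsilon^{\alpha_4 l_0}$ uniformly.

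The next step is to bound the CGO factors. On $\Gamma_2$ one has $x_1=0$ so $|u_0|=e^{\lambda t}e^{\mu^{-1/2}sd_2 x_2}$, and since $d_2<0$, $x_2\in(0,\varepsilon)$, this is $\le e^{\lambda T_2}$, i.e.\ $\Oh(1)$ (recall $T_2-T_1=\varepsilon^2$, so $e^{\lambda t}=1+\Oh(\varepsilon^2)$). On $\Gamma_4$, $x_1=\varepsilon^l$ and $d_1<0$, so $|u_0|\le e^{\mu^{-1/2}sd_1\varepsilon^l}\cdot\Oh(1)\le \Oh(1)$ — in fact exponentially small, but we only need the $\Oh(1)$ bound. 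From \eqref{eq:cgo1}, $|\nabla u_0|\lesssim \mu^{-1/2}\sqrt{s^2+\lambda}\,|u_0|\lesssim s|u_0|$ for $s$ large, so $|\partial_\nu u_0|\lesssim \mu^{-1/2}s$. The surface measure of $\Gamma_2\cup\Gamma_4$ is $\Oh(\varepsilon)$ and the time interval has length $\varepsilon^2$. Assembling: the term $\int\int w\,\partial_\nu u_0$ contributes $\lesssim \varepsilon^2\cdot\varepsilon\cdot (\varepsilon + C_4\varepsilon^{(1+\alpha_4)l_0}?)\cdot \mu^{-1/2}s$, which after multiplying by $\mu$ and expanding should reproduce the $s\varepsilon^{3+l}$-type and $s^2\varepsilon^{\cdots}$-type terms; the term $\int\int u_0\,\partial_\nu w$ contributes $\lesssim \varepsilon^2\cdot\varepsilon\cdot(\|h\|_{L^\infty}+C_4\varepsilon^{\alpha_4 l_0})$, producing the $\varepsilon^{2+\alpha_4 l_0}$ and $\varepsilon^3$-scale remainders. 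One must also account for the vertical tilt $\gamma(\varepsilon^l)=\Oh(\varepsilon)$ in the parametrization of $\Gamma_4$, which perturbs the exponent in $u_0$ by $\Oh(s\varepsilon)$ and feeds the cross terms $s^2\varepsilon^{2+l+(1+\alpha_4)l_0}$ and the higher-order $\Oh(s^3\varepsilon^{3+2l})$, $\Oh(s\varepsilon^{3+\alpha_4 l_0})$ tails. Throughout I would keep $s=\varepsilon^{-\beta}$ with \eqref{eq:beta} in mind so that all these products are genuinely small and the bookkeeping of which terms are "leading" versus "$\Oh(\cdot)$ remainder" is consistent.

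The main obstacle I anticipate is the careful derivation of the refined pointwise bound for $w$ on the two vertical boundary segments — showing that $|w|$ is not merely $\Oh(1)$ but carries the right power of $\varepsilon$ combined with $l_0$. Since $w=0$ on $\Gamma_1\cup\Gamma_3$ (the curved lateral pieces) but $\Gamma_2,\Gamma_4$ meet $\Gamma_1,\Gamma_3$ only at their endpoints, the decay of $w$ along $\Gamma_2,\Gamma_4$ is governed by the distance to those corner points combined with the $C^{1,\alpha_4}$ regularity; one has to Taylor-expand $w$ from a corner, using $w=0$ and the $\alpha_4$-Hölder bound on $\nabla w$ there, to get $|w(x)|\le C_4\,(\varepsilon + |x_1|)\cdot(\text{corner distance})^{\alpha_4}$ or similar, and then integrate this in $x_2$ over the width-$\varepsilon$ segment — but the width of $D_\varepsilon$ in the $x_1$-direction being $\varepsilon^l$ means the relevant corner distance along $\Gamma_4$ scales like $\varepsilon^{l_0}$, which is the source of the $\alpha_4 l_0$ and $(1+\alpha_4)l_0$ exponents. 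Getting all four boundary corners and both $\Gamma_2$ and $\Gamma_4$ contributions consistently, and verifying that the claimed five terms in \eqref{I1} are exactly what survives, will be the delicate part; everything else is a routine (if tedious) application of the explicit CGO formulas and the a priori bounds \eqref{Cond:fg0}–\eqref{Cond:uv}.
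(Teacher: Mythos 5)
Your overall scaffolding (Taylor expansion of $w$ from the corner, explicit CGO magnitudes, counting the measures of $\Gamma_2\cup\Gamma_4$ and of the time interval) points in the right direction, but the proposal misses the two structural facts that actually produce the estimate \eqref{I1}, so as written it does not prove the lemma. First, the paper does not merely bound $|w|\lesssim \varepsilon$ and treat $\Gamma_2$ and $\Gamma_4$ separately: it identifies the \emph{leading term} of $w$ on both faces as $-h(\mathbf{0},t_0)\,x_2$, using $w=0$ on $\hat\Gamma$ (hence $w(\mathbf{0},t_0)=\partial_1 w(\mathbf{0},t_0)=\partial_t w(\mathbf{0},t_0)=0$) together with $\partial_\nu w=h$ on $\hat\Gamma$ (hence $\partial_2 w(\mathbf{0},t_0)=-h(\mathbf{0},t_0)$). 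Because this leading term is the \emph{same} on $\Gamma_2$ and $\Gamma_4$ while $\partial_\nu u_0$ on the two faces differ only by sign and the factor $e^{\mu^{-1/2}(sd_1+\Bi\sqrt{s^2+\lambda}d_2)\varepsilon^l}$, the two contributions nearly cancel, leaving the factor $e^{\mu^{-1/2}(sd_1+\Bi\sqrt{s^2+\lambda}d_2)\varepsilon^l}-1=\mathcal{O}(s\varepsilon^l)$; this is the sole source of the extra $s\varepsilon^l$ that turns a crude $\mathcal{O}(s\varepsilon^{3})$-type bound into the stated $s^2\varepsilon^{3+l}$ (and of the $\varepsilon^l$ appearing in $s^2\varepsilon^{2+l+(1+\alpha_4)l_0}$ and $\mathcal{O}(s^3\varepsilon^{3+2l})$). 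Your face-by-face estimate never produces this factor — indeed you predict ``$s\varepsilon^{3+l}$-type'' terms, which do not occur in \eqref{I1} — and you instead attribute these terms to the tilt $\gamma(\varepsilon^l)=\mathcal{O}(\varepsilon)$, which is not where they come from. Without the cancellation the bound one gets is not majorized by the right-hand side of \eqref{I1} for the admissible range of $s=\varepsilon^{-\beta}$ in \eqref{eq:beta} (e.g.\ when $\beta<l$), and it is exactly in that regime that the sharper bound is needed later against the lower bound for $I_{43}$.

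Second, your bound $|\partial_\nu w|\lesssim \|h\|_{L^\infty}+C_4\varepsilon^{\alpha_4 l_0}$ on $\Gamma_2\cup\Gamma_4$ confuses the two normal directions. On these faces $\partial_\nu w=\mp\partial_1 w$, and $\partial_1$ is \emph{tangential} to $\hat\Gamma$ at the corner, where $w\equiv 0$; hence $\partial_1 w(\mathbf{0},t_0)=0$ and the correct bound is purely the H\"older remainder $|\partial_\nu w|\le C_4\varepsilon^{\alpha_4 l_0}$, with no $\|h\|_{L^\infty}$ bulk term. The datum $h$ enters only through $\partial_2 w$, i.e.\ through the leading term of $w$ discussed above. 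Keeping the spurious $\|h\|_{L^\infty}$ contribution produces a term of size $\|h\|_{L^\infty}\varepsilon^2$ at the paper's normalization, which is strictly larger than the $\varepsilon^{2+\alpha_4 l_0}$ allowed by \eqref{I1} and would degrade the final rate $\tau$ in Theorem \ref{main:thm}(a). So the missing ideas are precisely: (i) use both boundary conditions in \eqref{main:eqw} to pin down the first-order Taylor polynomial of $w$ at the corner (not just its size), and (ii) exploit the resulting $\Gamma_2$--$\Gamma_4$ cancellation before estimating; the remaining computations ($\int_0^\varepsilon x_2 e^{\mu^{-1/2}(sd_2-\Bi\sqrt{s^2+\lambda}d_1)x_2}\,\mathrm{d}x_2=\varepsilon^2/2+\mathcal{O}(s\varepsilon^3)$, etc.) are then routine, as you anticipate.
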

\begin{proof}
Using the boundary condition $w=0$ on $\hat{\Gamma}\times [T_1,T_2]$, it follows that the tangential derivative of \(w\) along the boundary curve \(\gamma\) and any time derivative on the boundary vanish. Namely, $\partial_t w(\Bx,t) =0$ for $\Bx\in \hat{\Gamma}$. In the transformed coordinate system, this implies $\partial_1 w(\mathbf{0},t) =0$.  Moreover, based on Lemma \ref{lem:w}, we have $w\in C^{1,\alpha_4}(\overline{D}_{\varepsilon}\times [T_1,T_2])$. From the geometric framework in \eqref{eq:bd2}, we derive the expansion for any point $(\Bx,t) \in \Gamma_2 \times [T_1,T_2]$ as $\varepsilon \ll 1$:   
    \begin{align}
        w(\Bx,t) &= w(\mathbf{0},t_0) + (0,x_2) \cdot (\partial_1 w(\mathbf{0},t_0),\partial_2 w(\mathbf{0},t_0))+ \partial_t w(\mathbf{0},t_0)(t-t_0)+ \mathcal{R}_{w|\Gamma_2}\notag \\
        &= - h(\mathbf{0},u,v)_{(\Bx,t)=(\mathbf{0},t_0)}x_2 +\mathcal{R}_{w|\Gamma_2},\label{eq:Gamma_2 w}\\
        \partial_{\nu} w(\Bx,t) &=(-1,0)\cdot (\partial_1 w(\Bx,t),\partial_2 w(\Bx,t)) = - \partial_1 w(\Bx,t) \notag \\
        &=- \partial_1 w(\mathbf{0},t_0) + \mathcal{R}_{\partial w | \Gamma_2} =\mathcal{R}_{\partial w | \Gamma_2}, \notag\\
        \partial_{\nu} u_0 &=- \mu^{-\frac{1}{2}}(sd_1 + \Bi \sqrt{s^2 + \lambda}d_2) e^{\lambda t} e^{\mu^{-\frac{1}{2}}(sd_2 - \Bi \sqrt{s^2 + \lambda}d_1)x_2 },\label{eq:Gamma_2 par u0}
    \end{align}
where
\begin{align}
|\mathcal{R}_{w|\Gamma_2}|&\leq C_4 (|\Bx| + |t-t_0|^{\frac{1}{2}}) ^{1+\alpha_4}\leq C_4 \varepsilon ^{1+\alpha_4},\notag\\
 |\mathcal{R}_{\partial w | \Gamma_2}| &\leq C_4  (|\Bx| + |t-t_0|^{\frac{1}{2}})^{\alpha_4}\leq C_4\varepsilon^{\alpha_4 },\label{eq:r2}
\end{align}
for some $t_0\in [T_1,T_2]$ and $T_2-T_1=\varepsilon^2$.

Similarly, for any point $(\Bx,t) \in \Gamma_4 \times [T_1,T_2]$ for $\varepsilon \ll 1$, we obtain:
    \begin{align}
        w(\Bx,t) &= w(\mathbf{0},t_0) + (\varepsilon^l,x_2) \cdot (\partial_1 w(\mathbf{0},t_0),\partial_2 w(\mathbf{0},t_0)) + \partial_t w(\mathbf{0},t_0)(t-t_0)+\mathcal{R}_{w|\Gamma_4}\notag\\
        &= - h(\mathbf{0},u,v)_{(\Bx,t)=(\mathbf{0},t_0)}x_2 + \mathcal{R}_{w|\Gamma_4},\label{eq:Gamma_4 w}\\
        \partial_{\nu} w(\Bx,t) &=(1,0)\cdot (\partial_1 w(\Bx,t),\partial_2 w(\Bx,t)) =  \partial_1 w(\Bx,t) \notag\\
        &= \partial_1 w(\mathbf{0},t_0) +  \mathcal{R}_{\partial w | \Gamma_4} =\mathcal{R}_{\partial w | \Gamma_4}, \notag\\
        \partial_{\nu} u_0 &= \mu^{-\frac{1}{2}}(sd_1 + \Bi \sqrt{s^2 + \lambda}d_2) e^{\lambda t} e^{\mu^{-\frac{1}{2}}(sd_2 - \Bi \sqrt{s^2 + \lambda}d_1)x_2 }  e^{\mu^{-\frac{1}{2}}(sd_1 + \Bi \sqrt{s^2 + \lambda}d_2)\varepsilon^l },\label{eq:Gamma_4 par u0}
    \end{align}

where 
\begin{align}
    |\mathcal{R}_{w|\Gamma_4}|&\leq C_4 \big(|(\varepsilon^l,x_2)| + |t-t_0|^{\frac{1}{2}}\big) ^{1+\alpha_4} \leq C_4 \varepsilon ^{(1+\alpha_4) l_0},\notag\\
    |\mathcal{R}_{\partial w | \Gamma_4}|& \leq C_4 \big(|(\varepsilon^l,x_2)| + |t-t_0|^{\frac{1}{2}}\big)^{\alpha_4} \leq C_4 \varepsilon^{\alpha_4 l_0},\label{eq:r4}
\end{align}
with $l_0$ defined in \eqref{eq:l0}.
Therefore, combining \eqref{eq:Gamma_2 w}, \eqref{eq:Gamma_2 par u0}, \eqref{eq:Gamma_4 w}, and \eqref{eq:Gamma_4 par u0}, we have the following estimate:
    \begin{align}
    & \left|\int_{T_1}^{T_2}\int_{\Gamma_2 \cup \Gamma_4} w \partial_{\nu} u_0  \mathrm{d} \sigma  \mathrm{d} t\right| \notag\\
    &\leq \bigg| \mu^{-\frac{1}{2}} (sd_1 + \Bi \sqrt{s^2 + \lambda}d_2) (e^{\mu^{-\frac{1}{2}}(sd_1 + \Bi \sqrt{s^2 + \lambda}d_2)\varepsilon^l}-1) \frac{e^{\lambda T_1}}{\lambda} (e^{\lambda \varepsilon}-1)\notag \\
    & \quad \times \int_0^{\varepsilon} \left(- h(\mathbf{0},u,v)_{(\Bx,t)=(\mathbf{0},t_0)} x_2 + C_4 (\varepsilon ^{(1+\alpha_4)l_0})\right) e^{\mu^{-\frac{1}{2}}(sd_2 - \Bi \sqrt{s^2 + \lambda}d_1)x_2} \mathrm{d} x_2 \bigg|.\label{eq:b24}
    \end{align}
By incorporating the CGO parameter $s$ from \eqref{eq:beta}, we obtain    
    \begin{align}
     &|s d_1 + i \sqrt{s^2 + \lambda} d_2| \leq  \sqrt{s^2+\lambda},\notag\\
     & e^{\mu^{-\frac{1}{2}}sd_2 \varepsilon}=1+\mu^{-\frac{1}{2}}sd_2 \varepsilon + \frac{(\mu^{-\frac{1}{2}}sd_2 \varepsilon)^2}{2} + \Oh(s^3\varepsilon^{3}),\notag\\
        &|1 - e^{ \mu^{-\frac{1}{2}}(sd_1 + \Bi \sqrt{s^2 + \lambda}d_2) \varepsilon^l}| \leq (e^{ \mu^{-\frac{1}{2}} \sqrt{1+\lambda}}-1) s  \varepsilon^l. \label{eq:esti}
    \end{align}
Next we estimate part of integral formula \eqref{eq:b24} further,
\begin{align}\label{eq:part}
    &\left|\int_0^\varepsilon x_2 e^{\mu^{-\frac{1}{2}} (sd_2 - \Bi \sqrt{s^2 + \lambda}d_1)x_2 } \mathrm{d} x_2\right|\leq \int_0^\varepsilon x_2 e^{\mu^{-\frac{1}{2}} s d_2 x_2 }\mathrm{d} x_2 \notag\\
    &\leq \bigg|\frac{e^{\mu^{-\frac{1}{2}} s d_2 \varepsilon} (\mu^{-\frac{1}{2}} s d_2 \varepsilon - 1) + 1}{(\mu^{-\frac{1}{2}} s d_2)^2}\bigg|= \frac{\varepsilon^2}{2} + \mathcal{O}(s\varepsilon^3).
\end{align}
Similarly, we have
\begin{equation}\label{eq:part1}
  \left  |\int_0^\varepsilon e^{(sd_2 - \Bi \sqrt{s^2 + \lambda}d_1)x_2 \mu^{-\frac{1}{2}}} \mathrm{d} x_2\right|\leq  \varepsilon + \mathcal{O}(s\varepsilon^2).
\end{equation}
Combining \eqref{eq:b24}, \eqref{eq:esti}, \eqref{eq:part}, and \eqref{eq:part1}, we obtain
\begin{align}\label{eq:part3}
         \left|\int_{T_1}^{T_2}\int_{\Gamma_2 \cup \Gamma_4} w \partial_{\nu} u_0  \mathrm{d} x_2  \mathrm{d} t\right|\leq & \frac{\mu^{-\frac{1}{2}} |h(\mathbf{0},u,v)_{(\Bx,t)=(\mathbf{0},t_0)}| e^{\lambda T_1}  }{2} s^2 \varepsilon^{l + 3}\notag\\
         &+C_4 \mu^{-\frac{1}{2}} e^{\lambda T_1}s^2 \varepsilon^{2+l+(1+\alpha_4)l_0} + \mathcal{O}(s^3\varepsilon^{3+2l}).
\end{align}
Likewise, we have the following estimate:
    \begin{align}\label{eq:part4}
        \left |\int_{T_1}^{T_2}\int_{\Gamma_2 \cup \Gamma_4}  u_0 \partial_{\nu}w  \mathrm{d} x_2 \mathrm{d} t\right| 
        \leq  & C_4\varepsilon^{\alpha_4 l_0} \bigg|\int_{T_1}^{T_2}\int_0^{\varepsilon} e^{\lambda t} e^{\mu^{-\frac{1}{2}}sd_2 x_2} (1+e^{\mu^{-\frac{1}{2}}sd_1 \varepsilon^l})  \mathrm{d} x_2 \mathrm{d} t\bigg|\notag\\
        &\leq  C_4 e^{\lambda T_1} \varepsilon^{2+\alpha_4 l_0} + \mathcal{O}(s \varepsilon^{3+\alpha_4 l_0}).     
    \end{align}
Combining \eqref{eq:part3} and \eqref{eq:part4}, it yields the desired bound \eqref{I1}.
\end{proof}

\begin{Lemma}\label{lemma:I2}
   The term $I_2$, defined in \eqref{I1-6}, can be decomposed as
    \begin{equation}\label{eq:I2-2}
        I_2 = I_{21} + I_{22},
    \end{equation}
    with
    \begin{align}
        I_{21}=& \frac{(f_1^{\prime}-g_1^{\prime})_{(\Bx,t)=(\mathbf{0},t_0)}}{\lambda (sd_2 - \Bi \sqrt{s^2 + \lambda}d_1)} e^{\lambda T_1} (e^{\lambda \varepsilon} - 1) (e^{(sd_1 + \Bi \sqrt{s^2 + \lambda}d_2)\varepsilon^l -1})(e^{(sd_2 - \Bi \sqrt{s^2 + \lambda}d_1)\varepsilon} - 1),\notag\\
        |I_{22}| \leq & C(\mu,C_3,C_4)\varepsilon^{2+\alpha_3 \alpha_4 l_0} +\mathcal{O}(s\varepsilon^{3+\alpha_3 \alpha_4 l_0}),\label{eq:I_2}
    \end{align}
where $l_0$ is defined in \eqref{eq:l0}, and $C(\mu,C_3,C_4)$ is a positive constant depending only on $\mu$, $C_3$, and $C_4$.
\end{Lemma}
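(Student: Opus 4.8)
The plan is to split $I_2$ into the part that can be integrated exactly against the CGO solution $u_0$ and a genuinely lower-order remainder produced by Lemma~\ref{lemma:exp}. First I would observe that only the two vertical faces contribute in a simple way: on $\Gamma_2=\{x_1=0,\ 0<x_2<\varepsilon\}$ the outward normal is $\nu=(-1,0)$ and on $\Gamma_4=\{x_1=\varepsilon^l,\ \gamma(\varepsilon^l)<x_2<\gamma(\varepsilon^l)+\varepsilon\}$ it is $\nu=(1,0)$, so $\nu\cdot(\BF^{\prime}-\mathbf{G}^{\prime})=\mp(f_1^{\prime}-g_1^{\prime})$ and $\mathrm{d}\sigma=\mathrm{d}x_2$ on both; the arc-length factor only enters on $\Gamma_1,\Gamma_3$, which are absent from $I_2$. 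Thus $I_2=\int_{T_1}^{T_2}\!\big[\int_{\Gamma_4}(f_1^{\prime}-g_1^{\prime})\,u_0\,\mathrm{d}x_2-\int_{\Gamma_2}(f_1^{\prime}-g_1^{\prime})\,u_0\,\mathrm{d}x_2\big]\mathrm{d}t$.

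Next I would apply the expansion of Lemma~\ref{lemma:exp}, writing $f_1^{\prime}(\Bx,t,u)-g_1^{\prime}(\Bx,t,v)=(f_1^{\prime}-g_1^{\prime})_{(\Bx,t)=(\mathbf{0},t_0)}+\delta_1(f_1^{\prime}-g_1^{\prime})$ about a base point $(\mathbf{0},t_0)$ with $t_0\in[T_1,T_2]$. On $\Gamma_2\cup\Gamma_4$ one has $|\Bx|\lesssim\varepsilon^{l_0}$, since $x_1\in\{0,\varepsilon^l\}$ and $x_2=\Oh(\varepsilon)$ by \eqref{eq:gamm2}, together with $|t-t_0|^{1/2}\lesssim\varepsilon$, so the remainder estimate of Lemma~\ref{lemma:exp} gives $|\delta_1(f_1^{\prime}-g_1^{\prime})|\le C(C_3,C_4)\,\varepsilon^{\alpha_3\alpha_4 l_0}$ there. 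I would then \emph{define} $I_{21}$ to be the contribution of the frozen coefficient $(f_1^{\prime}-g_1^{\prime})_{(\mathbf{0},t_0)}$ after additionally replacing the $\Gamma_4$ interval $(\gamma(\varepsilon^l),\gamma(\varepsilon^l)+\varepsilon)$ by $(0,\varepsilon)$, and set $I_{22}:=I_2-I_{21}$, which collects the $\delta_1$-part together with the discrepancy caused by the $\Oh(\varepsilon)$ shift $\gamma(\varepsilon^l)$ and by the $e^{c_1\varepsilon^l}$ factor on $\Gamma_4$.

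For $I_{21}$ the computation is explicit: substituting \eqref{eq:cgo1}, the $x_1$-exponential of $u_0$ equals $1$ on $\Gamma_2$ and $e^{\mu^{-1/2}(sd_1+\Bi\sqrt{s^2+\lambda}d_2)\varepsilon^l}$ on $\Gamma_4$, while the $x_2$- and $t$-dependence factors as $e^{\lambda t}e^{\mu^{-1/2}(sd_2-\Bi\sqrt{s^2+\lambda}d_1)x_2}$ in both cases. Carrying out the elementary integrals $\int_0^\varepsilon e^{c_2 x_2}\mathrm{d}x_2=(e^{c_2\varepsilon}-1)/c_2$ and $\int_{T_1}^{T_2}e^{\lambda t}\mathrm{d}t=e^{\lambda T_1}(e^{\lambda\varepsilon}-1)/\lambda$, the two boundary pieces combine into the single factor $(e^{c_1\varepsilon^l}-1)$ times the common $x_2$--$t$ factor, which is exactly the stated closed form of $I_{21}$ (here $c_1,c_2$ abbreviate $sd_1+\Bi\sqrt{s^2+\lambda}d_2$ and $sd_2-\Bi\sqrt{s^2+\lambda}d_1$). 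For $I_{22}$ I would use the sign hypothesis $d_1,d_2<0$: since $s,x_1,x_2\ge0$, the real part of the exponent of $u_0$ is nonpositive on $\Gamma_2\cup\Gamma_4$, hence $|u_0|\le e^{\lambda T_2}$ uniformly there; combining this with $|\delta_1(f_1^{\prime}-g_1^{\prime})|\lesssim\varepsilon^{\alpha_3\alpha_4 l_0}$, the $\Oh(\varepsilon)$ length of the $x_2$-interval, the $\Oh(\varepsilon)$ factor from $e^{\lambda\varepsilon}-1$, and the expansions $\int_0^\varepsilon e^{c_2 x_2}\mathrm{d}x_2=\varepsilon+\Oh(s\varepsilon^2)$ and $e^{c_2\gamma(\varepsilon^l)}-1=\Oh(s\varepsilon)$ in the spirit of \eqref{eq:esti}--\eqref{eq:part1}, one obtains $|I_{22}|\le C(\mu,C_3,C_4)\,\varepsilon^{2+\alpha_3\alpha_4 l_0}+\mathcal{O}(s\varepsilon^{3+\alpha_3\alpha_4 l_0})$.

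The hard part will be the bookkeeping certifying that the split $I_2=I_{21}+I_{22}$ really separates an exactly computable piece from a strictly subordinate one — in particular, that the presence of the $e^{c_1\varepsilon^l}$ factor on $\Gamma_4$ versus its absence on $\Gamma_2$, and the $\gamma(\varepsilon^l)$-shift of the $\Gamma_4$ interval, generate only $\mathcal{O}(s\varepsilon^{3+\alpha_3\alpha_4 l_0})$ errors rather than contaminating the leading order. This is precisely where the constraint $s=\varepsilon^{-\beta}$ with $\beta\le l$ and $\beta<1$ from \eqref{eq:beta} is essential: it keeps $e^{c_1\varepsilon^l}$ and $e^{c_2\varepsilon}$ of size $\Oh(1)$, so that $s\varepsilon^{3+\alpha_3\alpha_4 l_0}$ is genuinely lower order than the leading $\varepsilon^{2+\alpha_3\alpha_4 l_0}$ term in $I_{22}$.
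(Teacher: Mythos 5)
Your overall route is the paper's: split $I_2$ over $\Gamma_2\cup\Gamma_4$ with normals $(\mp1,0)$, freeze $f_1^{\prime}-g_1^{\prime}$ at $(\mathbf{0},t_0)$ via Lemma \ref{lemma:exp}, evaluate the frozen contribution in closed form using the elementary $x_2$- and $t$-integrals of $u_0$, and bound the H\"older remainder by $\varepsilon^{\alpha_3\alpha_4 l_0}$ times the $\Oh(\varepsilon)$ lengths of the $x_2$- and $t$-intervals; this reproduces the paper's \eqref{eq:I2}, \eqref{eq:I21} and \eqref{eq:I22}.

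The genuine gap is in where you place the $\gamma(\varepsilon^l)$-shift of the $\Gamma_4$ interval and how small you claim it is. With your definition ($I_{21}$ computed with the $\Gamma_4$ range replaced by $(0,\varepsilon)$ and $I_{22}:=I_2-I_{21}$), the quantity $I_{22}$ contains, besides the $\delta_1$-terms, the frozen coefficient $(f_1^{\prime}-g_1^{\prime})_{(\mathbf{0},t_0)}$ — which carries no $\varepsilon$-smallness, only a bound in terms of $C_3$ — multiplied by the shift discrepancy $\bigl(e^{\mu^{-1/2}(sd_2-\Bi\sqrt{s^2+\lambda}d_1)\gamma(\varepsilon^l)}-1\bigr)$ times the $x_2$- and $t$-integrals. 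That piece is of size $\Oh(s\varepsilon)\cdot\Oh(\varepsilon)\cdot\Oh(\varepsilon)=\Oh(s\varepsilon^{3})$, not $\Oh(s\varepsilon^{3+\alpha_3\alpha_4 l_0})$ as you assert: the H\"older factor $\varepsilon^{\alpha_3\alpha_4 l_0}$ simply is not attached to it. Under \eqref{eq:beta} alone, $s\varepsilon^{3}$ need not be dominated by $\varepsilon^{2+\alpha_3\alpha_4 l_0}$ either (that would require $\beta\leq 1-\alpha_3\alpha_4 l_0$, which fails for the larger choices of $\beta$ used later in the proof of Theorem \ref{main:thm}(a)), so the stated bound \eqref{eq:I_2} does not follow for your $I_{22}$. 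The paper's decomposition sidesteps this: its $I_{21}$ is the frozen coefficient times the full difference $\int_{\Gamma_4}u_0-\int_{\Gamma_2}u_0$ over the actual boundary pieces of \eqref{eq:bd2}, so its $I_{22}$ is purely the H\"older remainder and satisfies \eqref{eq:I_2}; the price (paid silently in \eqref{eq:I21}) is that the exact closed form of $I_{21}$ should carry the additional factor $e^{\mu^{-1/2}(sd_2-\Bi\sqrt{s^2+\lambda}d_1)\gamma(\varepsilon^l)}$, a $1+\Oh(s\varepsilon)$ correction, on the $\Gamma_4$ contribution. To repair your version you must either track the $\Oh(s\varepsilon^{3})$ term explicitly through the final assembly (where it is not harmless for all admissible $l$ and $\beta$) or fold the shift back into $I_{21}$ as the paper does.
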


\begin{proof}
    Using the H\"older expansion from Lemma \ref{lemma:exp}, we have
        \begin{align}
         I_2 &=\int_{T_1}^{T_2} \int_{\Gamma_2 \cup \Gamma_4}\nu \cdot (\BF^{\prime} -\mathbf{G}^{\prime}) u_0\mathrm{d} x_2 \mathrm{d} t\notag\\
         &= \int_{T_1}^{T_2} \int_{\Gamma_2} (-1,0) \cdot (\BF^{\prime} -\mathbf{G}^{\prime}) u_0\mathrm{d} x_2 \mathrm{d} t + \int_{T_1}^{T_2} \int_{\Gamma_4} (1,0) \cdot (\BF^{\prime} -\mathbf{G}^{\prime}) u_0\mathrm{d} x_2 \mathrm{d} t\notag\\
         &= \int_{T_1}^{T_2} \int_{\Gamma_2} -(f_1^{\prime}-g_1^{\prime}) u_0\mathrm{d} x_2 \mathrm{d} t + \int_{T_1}^{T_2} \int_{\Gamma_4} (f_1^{\prime}-g_1^{\prime}) u_0\mathrm{d} x_2 \mathrm{d} t \notag \\
         &=(f_1^{\prime}-g_1^{\prime})_{(\Bx,t)=(\mathbf{0},t_0)}(\int_{T_1}^{T_2}\int_{\Gamma_4} u_0\mathrm{d} x_2 \mathrm{d} t - \int_{T_1}^{T_2}\int_{\Gamma_2} u_0\mathrm{d} x_2 \mathrm{d} t)\notag\\
         &\quad +\big( \int_{T_1}^{T_2} \int_{\Gamma_4} \delta_1(f_1^{\prime}-g_1^{\prime}) u_0\mathrm{d} x_2 \mathrm{d} t - \int_{T_1}^{T_2} \int_{\Gamma_2} \delta_1(f_1^{\prime}-g_1^{\prime}) u_0\mathrm{d} x_2 \mathrm{d} t \big) \notag\\
        & :=I_{21}+I_{22}.\label{eq:I2}
        \end{align}
Furthermore, we get
    \begin{align}
       &\int_{T_1}^{T_2}\int_{\Gamma_4} u_0\mathrm{d} x_2 \mathrm{d} t - \int_{T_1}^{T_2}\int_{\Gamma_2} u_0\mathrm{d} x_2 \mathrm{d} t\notag\\
      & = \int_{T_1}^{T_2}\int_{0}^{\varepsilon} e^{\lambda t}  (e^{\mu^{-\frac{1}{2}}(sd_1 + \Bi \sqrt{s^2 + \lambda}d_2)\varepsilon^l }-1) e^{\mu^{-\frac{1}{2}}(sd_2 - \Bi \sqrt{s^2 + \lambda}d_1)x_2}\mathrm{d} x_2 \mathrm{d} t\notag\\
       &= \frac{e^{\lambda T_1}  (e^{\lambda \varepsilon} - 1) }{\lambda \mu^{-\frac{1}{2}}  (sd_2 - \Bi \sqrt{s^2 + \lambda }d_1)} (e^{\mu^{-\frac{1}{2}}(sd_1 + \Bi \sqrt{s^2 + \lambda}d_2)\varepsilon^l }-1) (e^{\mu^{-\frac{1}{2}}(sd_2 - \Bi \sqrt{s^2 + \lambda}d_1)\varepsilon} - 1),\label{eq:I21}
    \end{align}
and
    \begin{align}
        &\left|\int_{T_1}^{T_2} \int_{\Gamma_2 \cup \Gamma_4} \delta_1(f_1^{\prime}-g_1^{\prime}) u_0\mathrm{d} x_2 \mathrm{d} t\right|\notag\\
         \leq &2 C_3 C_{4}^{\alpha_3} \varepsilon^{\alpha_3 \alpha_4 l_0} \left|(e^{\mu^{-\frac{1}{2}} sd_1 \varepsilon^l }+1) \int_{T_1}^{T_2} \int_{0}^{\varepsilon} e^{\lambda t} e^{\mu^{-\frac{1}{2}} sd_2x_2}   \mathrm{d} x_2 \mathrm{d} t\right|\notag\\
        \leq & \frac{2 C_3 C_{4}^{\alpha_3}e^{\lambda T_1}}{|\lambda \mu^{-\frac{1}{2}} sd_2|}  |(e^{\mu^{-\frac{1}{2}} sd_1\varepsilon^l}+1) (e^{\lambda \varepsilon}-1)(e^{\mu^{-\frac{1}{2}} sd_2\varepsilon} -1)| \varepsilon^{\alpha_3 \alpha_4 l_0}\notag\\
        \leq &C(\mu,C_3,C_4)\varepsilon^{2+\alpha_3 \alpha_4 l_0} +\mathcal{O}(s\varepsilon^{2+l_0+\alpha_3 \alpha_4 l_0}).\label{eq:I22}
    \end{align}
    Utilizing \eqref{eq:I2}, \eqref{eq:I21}, and \eqref{eq:I22}, we deduce \eqref{eq:I_2}.
\end{proof}

\begin{Lemma}\label{lemma:I3}
    For the $I_3$ defined in \eqref{I1-6}, we have the following estimate:
\begin{equation*}
   |I_3| \leq  C(\mu,C_2,C_4)(\varepsilon^{2+l}+\varepsilon^{2+l+\alpha_2 \alpha_4 l_0}) + \mathcal{O}(s\varepsilon^{2+l+l_0}),
\end{equation*}
where $l_0$ is defined in \eqref{eq:l0}, and $C(\mu,C_2,C_4)$ is a positive constant depending only on $\mu$, $C_2$ and $C_4$.
\end{Lemma}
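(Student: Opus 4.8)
The plan is to estimate $I_3 = \int_{T_1}^{T_2}\int_{D_\varepsilon}(f-g)\,u_0\,\mathrm{d}\Bx\,\mathrm{d}t$ exactly as the preceding lemmas treated the boundary pieces, namely by freezing the integrand at a base point $(\Bx_0,t_0)\in\overline{D}_\varepsilon\times[T_1,T_2]$ and controlling the remainder via the H\"older expansion of Lemma~\ref{lemma:exp}. Write $(f-g)(\Bx,t,u,\nabla u,\ldots) = (f-g)_{(\Bx,t)=(\Bx_0,t_0)} + \delta_0(f-g)$, so that
\[
I_3 = (f-g)_{(\Bx,t)=(\mathbf{0},t_0)}\int_{T_1}^{T_2}\int_{D_\varepsilon} u_0\,\mathrm{d}\Bx\,\mathrm{d}t \;+\; \int_{T_1}^{T_2}\int_{D_\varepsilon}\delta_0(f-g)\,u_0\,\mathrm{d}\Bx\,\mathrm{d}t,
\]
choosing the base point to be the origin of the transformed coordinates for definiteness. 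The first summand is bounded by $\|f-g\|_{L^\infty}\le 2C_2$ (from \eqref{Cond:fg0}) times the explicit integral of $u_0$, and the second by $|\delta_0(f-g)|\le C(C_2,C_4)(|\Bx-\mathbf 0|+|t-t_0|^{1/2})^{\alpha_2\alpha_4}\le C(C_2,C_4)\,\varepsilon^{\alpha_2\alpha_4 l_0}$ over $\overline D_\varepsilon\times[T_1,T_2]$, again times the same explicit integral of $|u_0|$.

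The core computation is therefore the size of $\int_{T_1}^{T_2}\int_{D_\varepsilon}|u_0|\,\mathrm{d}\Bx\,\mathrm{d}t$. Using the coordinate description \eqref{eq:bd2} of $D_\varepsilon$ together with \eqref{eq:gamm2} (so the $x_2$-fiber over each $x_1$ has length $\varepsilon$ and its vertical offset $\gamma(x_1)=\Oh(\varepsilon)$ contributes only lower-order factors), and the explicit form \eqref{eq:cgo1} of $u_0$ with $\mathbf d=(d_1,d_2)$, $d_1,d_2<0$, I have
\[
|u_0(\Bx,t)| = e^{\lambda t}\,e^{\mu^{-1/2} s d_1 x_1}\,e^{\mu^{-1/2} s d_2 x_2},
\]
a genuinely decaying exponential in both spatial variables since $d_1,d_2<0$ and $s>0$. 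Integrating: $\int_{T_1}^{T_2} e^{\lambda t}\,\mathrm{d}t = \lambda^{-1}e^{\lambda T_1}(e^{\lambda\varepsilon}-1)=e^{\lambda T_1}\varepsilon + \Oh(\varepsilon^2)$ because $T_2-T_1=\varepsilon^2$ forces $e^{\lambda\varepsilon}-1 = \lambda\varepsilon+\Oh(\varepsilon^2)$; $\int_0^{\varepsilon^l} e^{\mu^{-1/2}sd_1 x_1}\,\mathrm{d}x_1 = \frac{1-e^{\mu^{-1/2}sd_1\varepsilon^l}}{\mu^{-1/2}s|d_1|}\le \frac{1}{\mu^{-1/2}s|d_1|}$, but to surface the $\varepsilon^l$ factor cleanly one expands $1-e^{\mu^{-1/2}sd_1\varepsilon^l} = \mu^{-1/2}s|d_1|\varepsilon^l + \Oh(s^2\varepsilon^{2l})$ — valid since $s\varepsilon^l=\varepsilon^{l-\beta}$ is small by \eqref{eq:beta} — giving $\varepsilon^l + \Oh(s\varepsilon^{2l})$; and $\int_0^{\varepsilon} e^{\mu^{-1/2}sd_2 x_2}\,\mathrm{d}x_2 = \varepsilon + \Oh(s\varepsilon^2)$ as in \eqref{eq:part1}. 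Multiplying the three one-dimensional bounds yields $\int_{T_1}^{T_2}\int_{D_\varepsilon}|u_0|\,\mathrm{d}\Bx\,\mathrm{d}t = e^{\lambda T_1}\varepsilon^{2+l} + \mathcal O(s\varepsilon^{2+l+l_0})$ (the $l_0$ in the error collects the worst of the three sub-leading corrections). Combining with the two prefactor bounds above gives
\[
|I_3|\le C(\mu,C_2,C_4)\big(\varepsilon^{2+l}+\varepsilon^{2+l+\alpha_2\alpha_4 l_0}\big) + \mathcal O(s\varepsilon^{2+l+l_0}),
\]
which is exactly the claimed estimate.

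The only delicate point — and the place to be careful — is bookkeeping the remainder orders: one must keep the factor $e^{\lambda t}$ (contributing one $\varepsilon$, not $\varepsilon^2$, because the leading term of $e^{\lambda\varepsilon}-1$ is linear), correctly identify that the $x_1$-integral over $[0,\varepsilon^l]$ contributes $\varepsilon^l$ while the $x_2$-integral over the $\varepsilon$-thin fiber contributes $\varepsilon$, and verify that the curvature term $\gamma(x_1)=\Oh(\varepsilon)$ shifting the $x_2$-fiber only multiplies $|u_0|$ by $e^{\mu^{-1/2}sd_2\,\Oh(\varepsilon)} = 1+\Oh(s\varepsilon)$, hence is absorbed into the stated error. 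Everything else is the same freeze-and-estimate mechanism already deployed in Lemmas~\ref{lemma:I1} and \ref{lemma:I2}, so no new ideas are needed; the proof is a short, direct computation built on Lemma~\ref{lemma:exp} and the explicit CGO formula \eqref{eq:cgo1}.
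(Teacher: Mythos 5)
Your proposal is correct and follows essentially the same route as the paper: freeze $f-g$ at a base point via Lemma~\ref{lemma:exp}, bound the frozen value by $C_2$ and the remainder by $C(C_2,C_4)\varepsilon^{\alpha_2\alpha_4 l_0}$, and reduce everything to the explicit CGO integral over $D_\varepsilon\times[T_1,T_2]$, whose factorized one-dimensional integrals give $\varepsilon^{2+l}$ to leading order with $\mathcal{O}(s\varepsilon^{2+l+l_0})$ corrections (the paper performs the same computation using \eqref{eq:sel} and the mean value theorem for the $\gamma$-offset rather than your Taylor expansions, but the bookkeeping and the resulting exponents coincide).
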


\begin{proof} 
Noting that $\mu\in \mathbb{R}_+$ in Lemma \ref{lem:cgo} and that $d_1 < 0$, and combining \eqref{eq:beta}, we obtain
\begin{equation} \label{eq:sel}
  (1-e^{\mu^{-\frac{1}{2}} d_1})s \varepsilon^l \leq \big|e^{\mu^{-\frac{1}{2}} s d_1 \varepsilon^l}-1\big| \leq -\mu^{-\frac{1}{2}} d_1 s \varepsilon^l.
\end{equation}
Then, applying Lemma \ref{lemma:exp} and the mean value theorem, together with \eqref{eq:gamm2}, \eqref{eq:esti}, and \eqref{eq:sel}, we obtain
    \begin{equation*}
        \begin{aligned}
            |I_3| &=\left |\int_{T_1}^{T_2} \int _{D_{\varepsilon}} (f - g) u_0 \mathrm{d} \Bx \mathrm{d} t\right|\\
            & \leq \left|(f - g)_{(\Bx,t)=(\Bx_0,t_0)} \int_{T_1}^{T_2} \int _{D_{\varepsilon}}  u_0 \mathrm{d} \Bx \mathrm{d} t\right| \\
            &\quad + C(C_2 C_{4}) \left|\int_{T_1}^{T_2} \int _{D_{\varepsilon}}  (|\Bx-\Bx_0| + |t-t_0|^{\frac{1}{2}})^{\alpha_2 \alpha_4}  u_0 \mathrm{d} \Bx \mathrm{d} t\right|\\
            &\leq \big(|(f - g)_{(\Bx,t)=(\Bx_0,t_0)}| +  C(C_2 C_{4}) \varepsilon^{\alpha_2 \alpha_4 l_0}\big) \bigg|\int_{T_1}^{T_2} \int_{D_{\varepsilon}} e^{\lambda t}e^{\mu^{-\frac{1}{2}}(sd_1x_1 + sd_2x_2)} \mathrm{d} \Bx \mathrm{d} t\bigg| \\
            &\leq  \frac{|(f - g)_{(\Bx,t)=(\Bx_0,t_0)}|+ C(C_2 C_{4}) \varepsilon^{\alpha_2 \alpha_4 l_0}}{\mu^{-1}\lambda s^2d_1d_2}e^{\lambda T_1}e^{\mu^{-\frac{1}{2}} sd_1 \gamma(\xi)}\\
            & \quad \times (e^{\lambda \varepsilon}-1)(e^{\mu^{-\frac{1}{2}}sd_2\varepsilon}-1)(e^{\mu^{-\frac{1}{2}}sd_1\varepsilon^l}-1)\\
            &\leq C(\mu,C_2) \varepsilon^{2+l}+ C(\mu,C_2,C_4)\varepsilon^{2+l+\alpha_2 \alpha_4 l_0} + \mathcal{O}(s\varepsilon^{2+l+l_0}),
        \end{aligned}
    \end{equation*}
    where $\xi \in (0,\varepsilon^l)$.
\end{proof}

\begin{Lemma}
    For the term $I_4$ defined in \eqref{I1-6}, we have the decomposition 
    \begin{equation}\label{eq:I4}
            I_4 = I_{41} + I_{42} + I_{43} + I_{44},
    \end{equation}
with
\begin{equation*}
    \begin{aligned}
          |I_{41} - I_{21}|  \leq &   C(\mu,C_3) s^2 \varepsilon^{3+l} + \mathcal{O}(s^3 \varepsilon^{3+l+l_0}),\\
          |I_{42}|  \leq &  C(\mu,C_3,C_4) s \varepsilon^{2+l+\alpha_3 \alpha_4 l_0} + \mathcal{O}(s^2 \varepsilon^{2+l+l_0+\alpha_3 \alpha_4 l_0}), \\ 
          |I_{43}|  \geq & |(f_2^{\prime} -g_2^{\prime})|_{(\Bx,t) = (\Bx_0, t_0)}|  (1-e^{\mu^{-1 / 2}d_1}) e^{\lambda T_1}   s \varepsilon^{2+l} + \Oh(s^2 \varepsilon^{2+l+l_0}),\\
          |I_{44}|  \leq & C(\mu,C_3,C_4) s \varepsilon^{2+l+\alpha_3 \alpha_4 l_0} + \mathcal{O}(s^2 \varepsilon^{2+l+l_0+\alpha_3 \alpha_4 l_0}).
        \end{aligned}
\end{equation*}
Here, $I_{21}$ and $l_0$ are defined by \eqref{eq:I2} and \eqref{eq:l0}, respectively, $\Bx_0$ is an arbitrary point in $\overline{D}_{\varepsilon}$, $t_0 \in [T_1,T_2]$, and $C(\mu, C_3, C_4)$ is the same positive constant as in \eqref{eq:I_2}.
\end{Lemma}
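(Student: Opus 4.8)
The plan is to expand $I_4$ explicitly via the gradient formula \eqref{eq:cgo1}, to isolate a frozen-coefficient principal part using the H\"older expansion of Lemma~\ref{lemma:exp}, and then to rewrite the two principal parts as boundary integrals over $\partial D_\varepsilon$ by the divergence theorem, so that one of them can be matched against the already-computed $I_{21}$ and the other bounded from below; the two H\"older-remainder pieces reduce to the one-dimensional exponential integrals already carried out in Lemmas~\ref{lemma:I2} and \ref{lemma:I3}. Concretely, write $\nabla u_0=(A,B)^{\top}u_0$ with $A=\mu^{-1/2}(sd_1+\Bi\sqrt{s^2+\lambda}\,d_2)$ and $B=\mu^{-1/2}(sd_2-\Bi\sqrt{s^2+\lambda}\,d_1)$, so that $\partial_1u_0=Au_0$, $\partial_2u_0=Bu_0$, and, recalling $\BF^{\prime}-\mathbf{G}^{\prime}=(f_1^{\prime}-g_1^{\prime},\,f_2^{\prime}-g_2^{\prime})^{\top}$ from \eqref{eq:2FG},
\[
I_4=\int_{T_1}^{T_2}\!\!\int_{D_\varepsilon}(f_1^{\prime}-g_1^{\prime})\,\partial_1u_0\,\rmd\Bx\,\rmd t+\int_{T_1}^{T_2}\!\!\int_{D_\varepsilon}(f_2^{\prime}-g_2^{\prime})\,\partial_2u_0\,\rmd\Bx\,\rmd t .
\]
Applying Lemma~\ref{lemma:exp} at the reference point $(\Bx_0,t_0)=(\mathbf{0},t_0)$, i.e. $f_k^{\prime}-g_k^{\prime}=(f_k^{\prime}-g_k^{\prime})_{(\mathbf{0},t_0)}+\delta_k(f_k^{\prime}-g_k^{\prime})$ with $|\delta_k(f_k^{\prime}-g_k^{\prime})|\le C(C_3,C_4)\varepsilon^{\alpha_3\alpha_4 l_0}$ on $\overline{D}_\varepsilon\times[T_1,T_2]$ (using $\mathrm{diam}\,D_\varepsilon=\Oh(\varepsilon^{l_0})$), produces the decomposition \eqref{eq:I4} with $I_{41}=(f_1^{\prime}-g_1^{\prime})_{(\mathbf{0},t_0)}\int_{T_1}^{T_2}\!\int_{D_\varepsilon}\partial_1u_0$, $I_{43}=(f_2^{\prime}-g_2^{\prime})_{(\mathbf{0},t_0)}\int_{T_1}^{T_2}\!\int_{D_\varepsilon}\partial_2u_0$, and $I_{42}=A\int_{T_1}^{T_2}\!\int_{D_\varepsilon}\delta_1(f_1^{\prime}-g_1^{\prime})u_0$, $I_{44}=B\int_{T_1}^{T_2}\!\int_{D_\varepsilon}\delta_2(f_2^{\prime}-g_2^{\prime})u_0$.

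The remainder terms $I_{42}$ and $I_{44}$ are the most direct. Since $d_1,d_2<0$, the weight $|u_0|=e^{\lambda t}e^{\mu^{-1/2}(sd_1x_1+sd_2x_2)}\le e^{\lambda t}$ on $D_\varepsilon$, and $\int_{T_1}^{T_2}\!\int_{D_\varepsilon}|u_0|$ is precisely the product of the one-dimensional exponential integrals already evaluated in \eqref{eq:esti}, \eqref{eq:part1} and \eqref{eq:I21}. Combining this with $|A|,|B|\le\sqrt{2s^2+\lambda}=\Oh(s)$ and the H\"older-remainder bound above gives $|I_{42}|,|I_{44}|\le C(\mu,C_3,C_4)\,s\varepsilon^{2+l+\alpha_3\alpha_4 l_0}+\Oh(s^2\varepsilon^{2+l+l_0+\alpha_3\alpha_4 l_0})$.

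For $I_{41}$, I would apply the divergence theorem to $\int_{D_\varepsilon}\partial_1u_0\,\rmd\Bx$ over the graph region $D_\varepsilon$ whose boundary decomposes as in \eqref{eq:bd2}. On the vertical faces $\Gamma_2$ ($\nu=(-1,0)$) and $\Gamma_4$ ($\nu=(1,0)$) this reproduces $\int_{\Gamma_4}u_0-\int_{\Gamma_2}u_0$, whose value was computed in \eqref{eq:I21}; multiplying by $(f_1^{\prime}-g_1^{\prime})_{(\mathbf{0},t_0)}$ recovers $I_{21}$ up to a correction generated by the $\Oh(\varepsilon)$ vertical shift of $\Gamma_4$ (recall $|\gamma|=\Oh(\varepsilon)$ by \eqref{eq:gamm2}). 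The graph faces $\Gamma_1$ ($x_2=\gamma(x_1)$) and $\Gamma_3$ ($x_2=\gamma(x_1)+\varepsilon$) contribute $\int_0^{\varepsilon^l}\bigl(u_0(x_1,\gamma(x_1))-u_0(x_1,\gamma(x_1)+\varepsilon)\bigr)\gamma^{\prime}(x_1)\,\rmd x_1$ times the time integral; writing the bracket as $-\int_{\gamma(x_1)}^{\gamma(x_1)+\varepsilon}Bu_0\,\rmd x_2$ and using $|B|=\Oh(s)$, $|\gamma^{\prime}|=\Oh(1)$ together with \eqref{eq:esti} and \eqref{eq:sel} bounds this by $C(\mu,C_3)\,s^2\varepsilon^{3+l}+\Oh(s^3\varepsilon^{3+l+l_0})$, which is the claimed estimate for $|I_{41}-I_{21}|$ once the $\Gamma_4$-shift correction is absorbed.

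The term $I_{43}$ is the crucial one and, I expect, the main obstacle. Here the reduction is cleaner: since $\nu_2=0$ on $\Gamma_2\cup\Gamma_4$, only the graph faces survive, and a direct iterated integration in $x_2$ gives
\[
\int_{T_1}^{T_2}\!\!\int_{D_\varepsilon}\partial_2u_0\,\rmd\Bx\,\rmd t=\Bigl(\int_{T_1}^{T_2}e^{\lambda t}\,\rmd t\Bigr)(e^{B\varepsilon}-1)\int_0^{\varepsilon^l}e^{Ax_1}e^{B\gamma(x_1)}\,\rmd x_1 .
\]
Using $|B\gamma(x_1)|=\Oh(s\varepsilon)\to0$ (as $\beta<1$ in \eqref{eq:beta}) to replace $e^{B\gamma(x_1)}$ by $1$ up to an $\Oh(s\varepsilon)$ relative error, and factoring out $\int_0^{\varepsilon^l}e^{Ax_1}\,\rmd x_1=(e^{A\varepsilon^l}-1)/A$, the task becomes a \emph{lower} bound on the modulus of the resulting leading term, for which one must rule out cancellation between the real and imaginary parts of $A$ and $B$. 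The key is the elementary inequality $|e^{z}-1|\ge|e^{\operatorname{Re}z}-1|$, which combined with the concavity estimate \eqref{eq:sel} yields $|e^{A\varepsilon^l}-1|\ge(1-e^{\mu^{-1/2}d_1})s\varepsilon^l$ (hence $|A|^{-1}|e^{A\varepsilon^l}-1|\gtrsim\varepsilon^l$ since $|A|=\Oh(s)$) and, in the $x_2$-direction, $|e^{B\varepsilon}-1|\gtrsim s\varepsilon$. Multiplying by $|(f_2^{\prime}-g_2^{\prime})_{(\mathbf{0},t_0)}|$ and the time factor then gives $|I_{43}|\ge|(f_2^{\prime}-g_2^{\prime})_{(\mathbf{0},t_0)}|\,(1-e^{\mu^{-1/2}d_1})\,e^{\lambda T_1}\,s\varepsilon^{2+l}+\Oh(s^2\varepsilon^{2+l+l_0})$, the $\Oh$-term absorbing the $\gamma$-curvature corrections. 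Summing $I_{41},I_{42},I_{43},I_{44}$ yields \eqref{eq:I4}.
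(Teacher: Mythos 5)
Your decomposition of $I_4$ via Lemma~\ref{lemma:exp} and \eqref{eq:cgo1}, and your bounds for $I_{42}$ and $I_{44}$, follow the paper's own route. Your lower bound for $I_{43}$ takes a genuinely different (and cleaner) path: the paper takes the real part of the $x_1$-integral and invokes a mean-value step that produces a cosine factor which it then silently discards, whereas your inequality $|e^{z}-1|\ge|e^{\Re z}-1|$ combined with \eqref{eq:sel}, after splitting off $e^{B\gamma(x_1)}$ as a $1+\Oh(s\varepsilon)$ perturbation, rules out real/imaginary cancellation directly; the constant you obtain differs from the one displayed in the lemma only by $\varepsilon$-independent factors, which is harmless for the way the lemma is used. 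Freezing $f_2^{\prime}-g_2^{\prime}$ at $\mathbf{0}$ rather than at an arbitrary $\Bx_0\in\overline{D}_\varepsilon$ is also only a cosmetic deviation, since the same argument runs with any base point.

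The genuine gap is in your estimate of $|I_{41}-I_{21}|$. With your notation $A=\mu^{-1/2}(sd_1+\Bi\sqrt{s^2+\lambda}\,d_2)$, $B=\mu^{-1/2}(sd_2-\Bi\sqrt{s^2+\lambda}\,d_1)$, the divergence theorem leaves the whole discrepancy on the graph faces, namely a multiple of $(1-e^{B\varepsilon})\int_0^{\varepsilon^l}\gamma^{\prime}(x_1)e^{Ax_1}e^{B\gamma(x_1)}\,\rmd x_1$ times the time integral. Estimating this as you propose, with $|1-e^{B\varepsilon}|\lesssim s\varepsilon$, $|\gamma^{\prime}|=\Oh(1)$ and $\int_0^{\varepsilon^l}e^{\mu^{-1/2}sd_1x_1}\rmd x_1\lesssim\varepsilon^l$, gives only $\Oh(s\varepsilon^{2+l})$, a full factor $s\varepsilon$ short of the claimed $C(\mu,C_3)s^2\varepsilon^{3+l}+\Oh(s^3\varepsilon^{3+l+l_0})$; and $\Oh(s\varepsilon^{2+l})$ is exactly the size of the main term $|I_{43}|$, so with this bound the error cannot be absorbed and the conclusion of the lemma (and its use in Theorem~\ref{main:thm}(a)) is lost. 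The paper's proof never uses $\gamma^{\prime}$ at all: it compares $I_{41}$ with the volume integral of $Au_0$ over the straightened rectangle $(0,\varepsilon^l)\times(0,\varepsilon)$, which is precisely the quantity computed in \eqref{eq:I21}, so the difference is $\frac{A}{B}(e^{B\varepsilon}-1)\int_{T_1}^{T_2}e^{\lambda t}\int_0^{\varepsilon^l}e^{Ax_1}\bigl(e^{B\gamma(x_1)}-1\bigr)\rmd x_1\rmd t$, and the missing smallness comes from $|e^{B\gamma(x_1)}-1|\lesssim s\varepsilon$ via \eqref{eq:gamm2}, yielding $s\varepsilon\cdot s\varepsilon\cdot\varepsilon^l\cdot\varepsilon=s^2\varepsilon^{3+l}$. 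If you insist on the boundary formulation, the repair is to write $\gamma^{\prime}e^{B\gamma}=B^{-1}\frac{\rmd}{\rmd x_1}\bigl(e^{B\gamma}-1\bigr)$ and integrate by parts in $x_1$, trading $\gamma^{\prime}$ for $e^{B\gamma}-1$. Relatedly, the ``$\Gamma_4$-shift correction'' you claim can be absorbed is of order $s\varepsilon^{3}$, which is not dominated by $s^2\varepsilon^{3+l}$ when $\beta<l$ (since $s\varepsilon^{l}\le1$); this bookkeeping problem disappears if you match $I_{41}$ against $I_{21}$ in its boundary-integral form \eqref{eq:I2} rather than against the explicit formula \eqref{eq:I21}.
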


\begin{proof}
Note that $\BF^{\prime},\mathbf{G}^{\prime} \in C^{\alpha_3}(\overline{D}_{\varepsilon})$ by virtue of \eqref{Cond:FG} and \eqref{eq:2FG}. Then, from Lemma \ref{lemma:exp} and \eqref{eq:cgo1}, we obtain the following H\"{o}lder expansion:
 
        \begin{align}\label{I-4}
            I_4 = & \int_{T_1}^{T_2} \int _{D_{\varepsilon}} (\BF^{\prime} -\mathbf{G}^{\prime}) \cdot \nabla u_0 \mathrm{d}\Bx \mathrm{d} t \notag \\
            =& \mu^{-\frac{1}{2}} \int_{T_1}^{T_2} \int _{D_{\varepsilon}} (sd_1 + \Bi \sqrt{s^2 + \lambda}d_2) (f_1^{\prime} - g_1^{\prime}) u_0 \mathrm{d}\Bx \mathrm{d} t \notag\\
            &+  \mu^{-\frac{1}{2}} \int_{T_1}^{T_2} \int _{D_{\varepsilon}} (sd_2 - \Bi \sqrt{s^2 + \lambda}d_1) (f_2^{\prime} - g_2^{\prime}) u_0 \mathrm{d} \Bx \mathrm{d} t  \notag \\
            =& \mu^{-\frac{1}{2}} (f_1^{\prime} -g_1^{\prime})|_{(\Bx,t) = (\mathbf{0}, t_0)} \int_{T_1}^{T_2} \int _{D_{\varepsilon}} (sd_1 + \Bi \sqrt{s^2 + \lambda}d_2)  u_0 \mathrm{d} \Bx \mathrm{d} t  \notag\\
            &+ \mu^{-\frac{1}{2}} \int_{T_1}^{T_2} \int _{D_{\varepsilon}} \delta_1 (f_1^{\prime} -g_1^{\prime}) (sd_1 + \Bi \sqrt{s^2 + \lambda}d_2)  u_0 \mathrm{d} \Bx \mathrm{d} t  \notag\\
            & + \mu^{-\frac{1}{2}}(f_2^{\prime} -g_2^{\prime})|_{(\Bx,t) = (\Bx_0, t_0)} \int_{T_1}^{T_2} \int _{D_{\varepsilon}} (sd_2 - \Bi \sqrt{s^2 + \lambda}d_1)  u_0 \mathrm{d} \Bx \mathrm{d} t  \notag\\
            &+ \mu^{-\frac{1}{2}} \int_{T_1}^{T_2} \int _{D_{\varepsilon}} \delta_2 (f_2^{\prime} -g_2^{\prime}) (sd_2 - \Bi \sqrt{s^2 + \lambda}d_1)  u_0 \mathrm{d} \Bx \mathrm{d} t  \notag\\
            :=& I_{41} + I_{42} + I_{43} + I_{44},
        \end{align}
Using \eqref{eq:I2} and through direct calculation, we obtain
    \begin{align}
        |I_{41} -I_{21}|
      & =\left|(f_1^{\prime} -g_1^{\prime})|_{(\Bx,t)= (\mathbf{0}, t_0)}\right|\Big|  \mu^{-\frac{1}{2}}(sd_1 + \Bi \sqrt{s^2 + \lambda}d_2) \int_{T_1}^{T_2} e^{\lambda t} \mathrm{d} t \notag \\
      &\quad\,\, \times \int_{0}^{\varepsilon^{l}} \int_{\gamma(x_1)}^{\gamma(x_1)+\varepsilon}  \, e^{\mu^{-\frac{1}{2}}(sd_1 + \Bi \sqrt{s^2 + \lambda}d_2)x_1} e^{\mu^{-\frac{1}{2}}(sd_2 - \Bi \sqrt{s^2 + \lambda}d_1)x_2}\mathrm{d} x_2 \mathrm{d} x_1 \notag \\
      &\quad \,\,- \mu^{-\frac{1}{2}}(sd_1 + \Bi \sqrt{s^2 + \lambda}d_2)\int_{T_1}^{T_2} e^{\lambda t} \mathrm{d} t \int_{0}^{\varepsilon^l} \notag \\
      &\quad \,\,\times  \int_{0}^{\varepsilon}   e^{\mu^{-\frac{1}{2}}(sd_1 + \Bi \sqrt{s^2 + \lambda}d_2 )x_1 }  e^{\mu^{-\frac{1}{2}}(sd_2 - \Bi \sqrt{s^2 + \lambda}d_1)x_2}\mathrm{d} x_2 \mathrm{d} x_1 \Big| \notag \\
       &= \left|(f_1^{\prime} -g_1^{\prime})|_{(\Bx,t)= (\mathbf{0}, t_0)}\right| \Big|  \frac{sd_1 + \Bi \sqrt{s^2 + \lambda}d_2}{sd_2 - \Bi \sqrt{s^2 + \lambda}d_1}  (e^{\mu^{-\frac{1}{2}}(sd_2 - \Bi \sqrt{s^2 + \lambda}d_1)\varepsilon} -1) \notag\\
       &\quad\,\,\times \int_{T_1}^{T_2} \int_{0}^{\varepsilon^{l}} e^{\lambda t} e^{\mu^{-\frac{1}{2}}(sd_1 + \Bi \sqrt{s^2 + \lambda}d_2)x_1}
         (e^{\mu^{-\frac{1}{2}}(sd_2 - \Bi \sqrt{s^2 + \lambda}d_1)\gamma(x_1)} -1) \mathrm{d} x_1 \mathrm{d} t \Big|.\notag
    \end{align}
Note that 
\begin{equation*}
    \left|\frac{sd_1 + \Bi \sqrt{s^2 + \lambda}d_2}{sd_2 - \Bi \sqrt{s^2 + \lambda}d_1} \right|\leq 1+ \Oh(s^{-2}).
\end{equation*}
Applying \eqref{eq:gamm2} along with  $s \varepsilon \ll  1$, we further obtain
\begin{equation*}
    |e^{\mu^{-\frac{1}{2}} (sd_2 - \Bi \sqrt{s^2 + \lambda}d_1)\gamma(x_1)} -1|\leq \mu^{-\frac{1}{2}} s \varepsilon + \mathcal{O}(s^2\varepsilon^2).
\end{equation*}
Then using \eqref{eq:sel} and through direct calculation, we obtain the following result:
    \begin{align}
        &\Big|\int_{T_1}^{T_2} \int_{0}^{\varepsilon^{l}} e^{\lambda t} e^{ \mu^{-\frac{1}{2}} (sd_1 + \Bi \sqrt{s^2 + \lambda}d_2)x_1} (e^{ \mu^{-\frac{1}{2}} (sd_2 - \Bi \sqrt{s^2 + \lambda}d_1)\gamma(x_1)} -1) \mathrm{d} x_1 \mathrm{d} t \Big| \notag \\
       & \leq \Big| \frac{e^{\lambda T_1}}{\lambda}(e^{\lambda \varepsilon}-1)\int_{0}^{\varepsilon^{l}} e^{\mu^{-\frac{1}{2}} sd_1x_1}|e^{\mu^{-\frac{1}{2}} (sd_2 - \Bi \sqrt{s^2 + \lambda}d_1)\gamma(x_1)} -1| \mathrm{d} x_1 \Big| \notag\\
      &\leq  \Big|\frac{1}{\mu^{-\frac{1}{2}} \lambda s d_1} e^{\lambda T_1}(e^{\lambda \varepsilon}-1)(e^{\mu^{-\frac{1}{2}}sd_1\varepsilon^l}-1)\Big| (\mu^{-\frac{1}{2}} s \varepsilon + \mathcal{O}(s^2\varepsilon^2))\notag\\
       &\leq C(\lambda,\mu) s \varepsilon^{2+l} + \mathcal{O}(s^2 \varepsilon^{2+l+l_0}).\notag
   \end{align}
Therefore, we obtain the estimate $|I_{41}-I_{21}| \leq C(\lambda,\mu,C_3) s^2 \varepsilon^{3+l} + \mathcal{O}(s^3 \varepsilon^{3+l+l_0})$, where positive constant $C(\mu,C_3)$ depends only on $\lambda$, $\mu$ and $C_3$, with $C_3$ is defined in \eqref{Cond:FG}.

For $I_{42}$ defined in \eqref{I-4}, applying Lemma \ref{lemma:exp} and \eqref{eq:sel}, and using a method similar to the one in \eqref{eq:esti}, we have the following estimate:
    \begin{align}
        I_{42}&=\Big| \mu^{-\frac{1}{2}} \int_{T_1}^{T_2} \int _{D_{\varepsilon}} \delta_1 (f_1^{\prime} -g_1^{\prime}) (sd_1 + \Bi \sqrt{s^2 + \lambda}d_2)  u_0 \mathrm{d} \Bx \mathrm{d} t\Big| \notag\\
        &\leq  \mu^{-\frac{1}{2}} C(C_3 C_{4}) \Big| (sd_1 + \Bi \sqrt{s^2 + \lambda}d_2) \int_{T_1}^{T_2} \int _{D_{\varepsilon}}  (|\Bx -\Bx_0|+ |t-t_0|^{\frac{1}{2}})^{\alpha_3 \alpha_4} u_0  \mathrm{d} \Bx \mathrm{d} t\Big| \notag\\
        &\leq  \mu^{-\frac{1}{2}} C(C_3 C_{4}) \varepsilon^{\alpha_3 \alpha_4 l_0}  \Big| (sd_1 + \Bi \sqrt{s^2 + \lambda}d_2) \int_{T_1}^{T_2} \int _{D_{\varepsilon}} e^{\lambda t} e^{ \mu^{-\frac{1}{2}} s d_1 x_1}  e^{ \mu^{-\frac{1}{2}} s d_2 x_2} \mathrm{d} \Bx \mathrm{d} t \Big| \notag\\
        & =  \frac{C(C_3 C_{4})}{\mu^{-\frac{1}{2}} \lambda s^2 |d_1 d_2|} \varepsilon^{\alpha_3 \alpha_4 l_0} e^{\lambda T_1}\Big|(e^{\lambda \varepsilon} - 1)  (e^{\mu^{-\frac{1}{2}} sd_2\varepsilon} - 1)(e^{\mu^{-\frac{1}{2}}sd_1 \varepsilon^l} - 1)\Big|\notag\\
        & \leq C(\mu,C_3,C_4) s \varepsilon^{2+l+\alpha_3 \alpha_4 l_0} + \mathcal{O}(s^2 \varepsilon^{2+l+l_0+\alpha_3 \alpha_4 l_0}).\label{eq:I42-2}
    \end{align}

The estimate for $I_{44}$ can be deduced analogously.
For $I_{43}$ defined in \eqref{I-4}, using \eqref{eq:sel}, and the mean value theorem, we obtain the following estimate:
        \begin{align}
            &\mu^{-\frac{1}{2}} \Big| (sd_2 - \Bi \sqrt{s^2 + \lambda}d_1) \int_{T_1}^{T_2} \int _{D_{\varepsilon}}  u_0 \mathrm{d} \Bx \mathrm{d} t\Big|\notag\\
            =&\mu^{-\frac{1}{2}} \Big| (sd_2 - \Bi \sqrt{s^2 + \lambda}d_1) \int_{T_1}^{T_2} e^{\lambda t} \mathrm{d} t \notag\\
            &\times\int_{0}^{\varepsilon^{l}} \int_{\gamma(x_1)}^{\gamma(x_1)+\varepsilon}  e^{\mu^{-\frac{1}{2}}(sd_1 + \Bi \sqrt{s^2 + \lambda}d_2)x_1 } e^{\mu^{-\frac{1}{2}} (sd_2 - \Bi \sqrt{s^2 + \lambda}d_1)x_2} \mathrm{d} x_2 \mathrm{d} x_1 \Big|\notag\\
            =&\Big|(e^{\mu^{-\frac{1}{2}} (sd_2 - \Bi \sqrt{s^2 + \lambda}d_1)\varepsilon} - 1) (e^{\lambda \varepsilon} - 1) \frac{e^{\lambda T_1}}{\lambda} \notag\\
            &\times \int_{0}^{\varepsilon^{l}}  e^{\mu^{-\frac{1}{2}} (sd_2 - \Bi \sqrt{s^2 + \lambda}d_1)\gamma(x_1)} e^{\mu^{-\frac{1}{2}}(sd_1 + \Bi \sqrt{s^2 + \lambda}d_2)x_1}\mathrm{d} x_1 \Big| \notag\\
            \geq &\Big|(e^{\mu^{-\frac{1}{2}} (sd_2 - \Bi \sqrt{s^2 + \lambda}d_1)\varepsilon} - 1) (e^{\lambda \varepsilon} - 1) \frac{e^{\lambda T_1}}{\lambda}\Big| \notag\\
             &\times \Big| \Re{\int_{0}^{\varepsilon^{l}}  e^{\mu^{-\frac{1}{2}}(sd_2 - \Bi \sqrt{s^2 + \lambda}d_1)\gamma(x_1)}e^{\mu^{-\frac{1}{2}}(sd_1 + \Bi \sqrt{s^2 + \lambda}d_2)x_1}\mathrm{d} x_1 }\Big| \notag\\
            \geq & \Big|\frac{\cos{(\mu^{-\frac{1}{2}} \sqrt{s^2+\lambda}(d_1\gamma(\xi_1)-d_2\xi_1))}}{\mu^{-\frac{1}{2}} \lambda  {sd_1}} e^{\lambda T_1}e^{\mu^{-\frac{1}{2}} sd_2\gamma(\xi_1)} \notag\\
            &\times (e^{\mu^{-\frac{1}{2}} (sd_2 - \Bi \sqrt{s^2 + \lambda}d_1)\varepsilon} - 1) (e^{\lambda \varepsilon} - 1)  (e^{ \mu^{-\frac{1}{2}} sd_1\varepsilon^l} - 1)  \Big| \notag\\
            \geq &   (1-e^{ \mu^{-\frac{1}{2}} d_1}) e^{\lambda T_1}  e^{\mu^{-\frac{1}{2}} sd_2\gamma(\xi_1)} s \varepsilon^{2+l} + \Oh(s^2 \varepsilon^{2+l+l_0}), \label{eq:ge1}
        \end{align}
       where $\xi_1\in(0,\varepsilon^l)$ and $l_0$ is defined in \eqref{eq:l0}.
\end{proof}
 
\begin{Lemma}\label{lemma:I5}
    For the $I_5$  defined in \eqref{I1-6}, we have the following estimation
    \begin{equation*}
       | I_5 |\leq C(C_1,C_4)\varepsilon^{2+l+\alpha_1 \alpha_4 l_0} +   \mathcal{O}(s\varepsilon^{2+l+l_0+\alpha_1 \alpha_4 l_0}),
    \end{equation*}
 where $l_0$ is defined in \eqref{eq:l0}, and $C(C_1, C_4)$ is a positive constant depending only on $C_1$ and $C_4$.
\end{Lemma}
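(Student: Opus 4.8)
The plan is to evaluate $I_5=\int_{D_\varepsilon}\big(H(u)-H(v)\big)u_0\big|_{T_1}^{T_2}\,\mathrm{d}\Bx$ directly, exploiting three facts: (i) $w=u-v$, and hence $H(u)-H(v)$, vanishes on the lateral part $\Gamma_1\cup\Gamma_3$ of $\partial D_\varepsilon$; (ii) the H\"older expansion of $H(u)-H(v)$ obtained by composing $H\in C^{0,\alpha_1}$ with $u,v\in C^{1,\alpha_4}$, in the spirit of Lemma~\ref{lemma:exp}; and (iii) the explicit profile $u_0=e^{\lambda t}e^{\rho\cdot\Bx/\sqrt\mu}$ from \eqref{eq:cgo1}, together with the decay estimates \eqref{eq:sel}, \eqref{eq:esti} and the smallness \eqref{eq:gamm2} of $\gamma$. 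First I would set $G(\Bx,t):=H(u(\Bx,t))-H(v(\Bx,t))$ and expand about a base point $(\Bx_0,t_0)$ with $\Bx_0=\mathbf{0}$ lying on $\hat{\Gamma}$ and $t_0\in[T_1,T_2]$. Since $u=v$ on $\hat{\Gamma}$ we have $G(\mathbf{0},t_0)=0$, and feeding the H\"older modulus of $H$ into the $C^{1,\alpha_4}$-regularity of $u,v$ gives $|G(\Bx,t)|\le C(C_1,C_4)\big(|\Bx|+|t-t_0|^{1/2}\big)^{\alpha_1\alpha_4}\le C(C_1,C_4)\varepsilon^{\alpha_1\alpha_4 l_0}$ on $\overline{D}_\varepsilon\times[T_1,T_2]$, using $|\Bx|=\mathcal{O}(\varepsilon^{l_0})$ and $|t-t_0|^{1/2}=\mathcal{O}(\varepsilon)$; integrating $\partial_2 w$ from $\hat{\Gamma}$ first gives the sharper $|G(\Bx,t)|\le C|x_2-\gamma(x_1)|^{\alpha_1}$, which I would keep in reserve.

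Next I would separate the two time endpoints. Writing $u_0(\Bx,T_2)=e^{\lambda(T_2-T_1)}u_0(\Bx,T_1)$,
\[
I_5 = e^{\lambda T_1}\big(e^{\lambda(T_2-T_1)}-1\big)\int_{D_\varepsilon}G(\Bx,T_2)\,e^{\rho\cdot\Bx/\sqrt\mu}\,\mathrm{d}\Bx + e^{\lambda T_1}\int_{D_\varepsilon}\big(G(\Bx,T_2)-G(\Bx,T_1)\big)e^{\rho\cdot\Bx/\sqrt\mu}\,\mathrm{d}\Bx =: A + B .
\]
For $A$ I would combine $|e^{\lambda(T_2-T_1)}-1|=\mathcal{O}(T_2-T_1)$, the bound $|G(\cdot,T_2)|\le C\varepsilon^{\alpha_1\alpha_4 l_0}$, and $\int_{D_\varepsilon}|e^{\rho\cdot\Bx/\sqrt\mu}|\,\mathrm{d}\Bx=\mathcal{O}(\varepsilon^{l+1})$ — the latter from $\int_0^{\varepsilon^l}e^{sd_1 x_1/\sqrt\mu}\,\mathrm{d} x_1=\mathcal{O}(\varepsilon^l)$ via \eqref{eq:sel} and $\int_{\gamma(x_1)}^{\gamma(x_1)+\varepsilon}e^{sd_2 x_2/\sqrt\mu}\,\mathrm{d} x_2=\varepsilon+\mathcal{O}(s\varepsilon^2)$, with the shift $\gamma(x_1)=\mathcal{O}(\varepsilon)$ absorbed through \eqref{eq:gamm2} — to produce exactly $|A|\le C(C_1,C_4)\varepsilon^{2+l+\alpha_1\alpha_4 l_0}+\mathcal{O}(s\varepsilon^{2+l+l_0+\alpha_1\alpha_4 l_0})$, i.e. the asserted bound.

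The delicate term is $B$, the temporal increment of $H(u)-H(v)$, because in part~$(a)$ the transfer function $H$ is only H\"older-continuous, so $\partial_t\big(H(u)-H(v)\big)$ is not available pointwise and the crude bound $|G(\Bx,T_2)-G(\Bx,T_1)|\le 2\sup|G|$ is one power of $\varepsilon$ short. Here I would use that $G(\cdot,T_2)-G(\cdot,T_1)$ still vanishes on $\hat{\Gamma}$, so the spatial expansion of the previous step applies to it as well, together with the H\"older modulus of $G$ in $t$ inherited from the parabolic $C^{1,\alpha_4}$-bound on $u,v$, and then integrate the resulting estimate against $|e^{\rho\cdot\Bx/\sqrt\mu}|$ over $D_\varepsilon$ with the same one-dimensional decay estimates used for $A$; if a pointwise control of the increment turns out to be insufficient, an alternative is to rewrite $G(\cdot,T_2)-G(\cdot,T_1)=\int_{T_1}^{T_2}\big[\mu\Delta w+(f-g)-\nabla_{\Bx}\cdot(\BF'-\mathbf{G}')\big]\,\mathrm{d} t$ from \eqref{main:eqw} and bound it by quantities already estimated. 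I expect this to be the main technical obstacle and the place where the a priori bounds on $\|u\|_{C^{1,\alpha_4}}$ must be used carefully. Finally I would add $A$ and $B$, absorb the $\mathcal{O}(\cdot)$ remainders using $s=\varepsilon^{-\beta}$ with $\beta\le l$ and $\beta<1$ (so that $s\varepsilon\ll 1$ and $s\varepsilon^l\lesssim 1$), and read off the stated estimate with constant depending only on $C_1$ and $C_4$ (and the fixed $\mu,\lambda$).
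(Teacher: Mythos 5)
Your main line of attack is the paper's: expand $G:=H(u)-H(v)$ about a point of $\hat\Gamma\times[T_1,T_2]$ where $u=v$ kills the zeroth-order term, so that $|G|\le C(C_1,C_4)\varepsilon^{\alpha_1\alpha_4 l_0}$ on $\overline{D}_\varepsilon\times[T_1,T_2]$, and then estimate the spatial integral of $e^{\rho\cdot\Bx/\sqrt{\mu}}$ over $D_\varepsilon$ via \eqref{eq:sel}, \eqref{eq:gamm2} and the mean value theorem, picking up the factor $\varepsilon^{1+l}$ (up to $\mathcal{O}(s\varepsilon^{\,\cdot})$ corrections) and the extra $\mathcal{O}(\varepsilon)$ from the difference of the time factors $e^{\lambda T_2}-e^{\lambda T_1}$. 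Your term $A$ is, essentially line for line, the chain of inequalities in the paper's proof of Lemma \ref{lemma:I5}; the paper writes the endpoint factor as $e^{\lambda T_1}(e^{\lambda\varepsilon}-1)$ and produces $e^{\mu^{-1/2}sd_2\gamma(\xi_2)}$ from the $\gamma$-shift exactly as you indicate.

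The gap is your term $B$: you never actually bound it, you only list two candidate strategies and label it the main obstacle, and neither strategy as stated closes it. The pointwise route fails quantitatively: from \eqref{Cond:uv} and \eqref{Cond:H} one only gets $|G(\Bx,T_2)-G(\Bx,T_1)|\le C(C_1,C_4)\,\varepsilon^{\alpha_1(1+\alpha_4)}$ (the time increment of $u,v$ over an interval of length $\varepsilon^2$ is $\mathcal{O}(\varepsilon^{1+\alpha_4})$, and $H$ is only $C^{0,\alpha_1}$ in case (a)), so $|B|\lesssim\varepsilon^{1+l+\alpha_1(1+\alpha_4)}$; this meets the target exponent $2+l+\alpha_1\alpha_4 l_0$ only when $\alpha_1\ge\frac{1}{1+\alpha_4(1-l_0)}$, and in particular never when $l\ge 1$ (where $l_0=1$ forces $\alpha_1\ge 1$). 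The boundary vanishing of $G(\cdot,T_2)-G(\cdot,T_1)$ gives an alternative bound $\varepsilon^{\alpha_1\alpha_4 l_0}$, but one may only take the minimum of the two moduli, not their product, so this does not help either. Your fallback of substituting $\partial_t G$ from \eqref{main:eqw} and integrating against $e^{\rho\cdot\Bx/\sqrt{\mu}}$ essentially reconstructs the identity of Lemma \ref{lemma:I1-6} on $D_\varepsilon\times[T_1,T_2]$ and so risks circularity unless organized very carefully. For calibration: the paper's own proof never isolates $B$ at all — it bounds $|H(u)-H(v)|$ uniformly and applies the $T_1$-to-$T_2$ evaluation only to the factor $u_0$, which is precisely your $A$; so the portion you completed reproduces the paper's entire argument, and the term you flagged is one the paper tacitly drops rather than estimates. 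As a self-contained proof, however, your proposal is incomplete until $B$ is either estimated or rigorously shown to be absorbable into the stated remainder.
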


\begin{proof}
For any point $(\Bx,t) \in D_{\varepsilon} \times [0,T]$, we have the following expansion
    \begin{align}
        H(u)=& H(u(\mathbf{0},t_0)) + \delta H(u(\Bx,t)),\notag\\
        H(v)=& H(v(\mathbf{0},t_0)) + \delta H(v(\Bx,t)),\label{eq:H}
    \end{align}
where
\begin{align*}
|\delta H(u(\Bx,t))| &\leq C_1 C_4^{\alpha_1}  (|\Bx|+|t-t_0|^{\frac{1}{2}})^{\alpha_1 \alpha_4},\\
|\delta H(v(\Bx,t))| &\leq C_1 C_4^{\alpha_1}  (|\Bx|+|t-t_0|^{\frac{1}{2}})^{\alpha_1 \alpha_4}.
\end{align*}
Therefore, combining the mean value theorem, and using  \eqref{Cond:H}, \eqref{eq:gamm2}, \eqref{eq:l0}, and \eqref{eq:sel}, we obtain
\begin{equation*}
    \begin{aligned}
        |I_5| =&\left | \int_{D_{\varepsilon}}(H(u)-H(v))u_0|_{T_1}^{T_2} \mathrm{d} \Bx\right|\\
        \leq &C_1 C_4^{\alpha_1} \varepsilon^{\alpha_1 \alpha_4 l_0} e^{\lambda T_1} \big|(e^{\lambda \varepsilon}-1) \int_{D_{\varepsilon}} e^{\mu^{-\frac{1}{2}} sd_1x_1+\mu^{-\frac{1}{2}} sd_2x_2} \mathrm{d} \Bx\big| \\
        = & \frac{C_1 C_4^{\alpha_1} \varepsilon^{\alpha_1 \alpha_4 l_0}}{\mu^{-1} \lambda s^2 |d_1d_2|}  e^{\lambda T_1}e^{\mu^{-\frac{1}{2}}  sd_2\gamma(\xi_2)} \big| (e^{\lambda \varepsilon}-1)(e^{\mu^{-\frac{1}{2}}  sd_1\varepsilon}-1)(e^{\mu^{-\frac{1}{2}} sd_2\varepsilon^{l}}-1)\big|\\
        \leq & C(C_1,C_4)\varepsilon^{2+l+\alpha_1 \alpha_4 l_0} +   \mathcal{O}(s\varepsilon^{2+l+l_0+\alpha_1 \alpha_4 l_0}),
    \end{aligned}
\end{equation*}
where $\xi_2\in(0,\varepsilon^l)$, and $l_0$ is defined in \eqref{eq:l0}.
\end{proof}

\begin{Lemma}\label{lemma:I6}
    For the $I_6$ defined in \eqref{I1-6}, we have the following estimation
    \begin{equation*}
        |I_6| \leq C(C_1,C_4)  \varepsilon^{2 + l +\alpha_1 \alpha_4 l_0} + \mathcal{O}(s\varepsilon^{2+l+l_0+\alpha_1 \alpha_4 l_0}),
    \end{equation*}
    where $l_0$ is defined in \eqref{eq:l0} and positive constant $ C(C_1,C_4)$ depends on $C_1$ and $C_4$.
\end{Lemma}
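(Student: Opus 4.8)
The plan is to mirror the proof of Lemma~\ref{lemma:I5} almost verbatim; the only genuinely new ingredient is that here one must control the combination $w-(H(u)-H(v))$ rather than $H(u)-H(v)$ alone. First I would record that, by the homogeneous Dirichlet condition in \eqref{main:eqw}, $w=0$ on $\hat\Gamma$, and by Lemma~\ref{lem:w} also $u=v$ on $\hat\Gamma$, hence $H(u)=H(v)$ there as well. Since the curved faces $\Gamma_1,\Gamma_3$ of $D_\varepsilon$ lie in $\hat\Gamma$ and $D_\varepsilon$ has transversal width $\varepsilon$, the $C^{1,\alpha_4}$ bound \eqref{Cond:uv} gives $|w(\Bx,t)|\leq 2C_4\varepsilon$ for every $(\Bx,t)\in D_\varepsilon\times[T_1,T_2]$ (integrate $\partial_2 w$ across the width). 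On the other hand, expanding $H$ about the base point $(\mathbf{0},t_0)\in\hat\Gamma\times[T_1,T_2]$ exactly as in \eqref{eq:H}, and using $u(\mathbf{0},t_0)=v(\mathbf{0},t_0)$ together with \eqref{Cond:H}, I obtain
\begin{equation*}
|H(u)-H(v)|=|\delta H(u)-\delta H(v)|\leq 2C_1C_4^{\alpha_1}\bigl(|\Bx|+|t-t_0|^{1/2}\bigr)^{\alpha_1\alpha_4}\leq 2C_1C_4^{\alpha_1}\varepsilon^{\alpha_1\alpha_4 l_0},
\end{equation*}
with $l_0$ as in \eqref{eq:l0}. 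Since $\alpha_1\alpha_4 l_0<l_0\leq 1$, these two bounds combine into the uniform pointwise estimate $|w-(H(u)-H(v))|\leq C(C_1,C_4)\,\varepsilon^{\alpha_1\alpha_4 l_0}$ on $D_\varepsilon\times[T_1,T_2]$.

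With this in hand, $I_6$ reduces to a pure CGO volume integral. Using $\partial_t u_0=\lambda u_0$ and the definition of $I_6$ in \eqref{I1-6}, I would factor out the uniform bound to get
\begin{equation*}
|I_6|\leq C(C_1,C_4)\,\varepsilon^{\alpha_1\alpha_4 l_0}\,\lambda\int_{T_1}^{T_2}\int_{D_\varepsilon}|u_0|\,\mathrm{d}\Bx\,\mathrm{d}t,
\end{equation*}
where, since $d_1,d_2<0$, one has $|u_0|=e^{\lambda t}e^{\mu^{-1/2}(sd_1x_1+sd_2x_2)}$ by \eqref{eq:cgo1}. The integral $\lambda\int_{T_1}^{T_2}\int_{D_\varepsilon}|u_0|$ is then handled exactly as in the proofs of Lemmas~\ref{lemma:I3} and \ref{lemma:I5}: separate variables over $D_\varepsilon$, use the mean value theorem together with $|\gamma|=\mathcal{O}(\varepsilon)$ from \eqref{eq:gamm2} to pull out $e^{\mu^{-1/2}sd_2\gamma(x_1)}$, bound the $x_1$-integral over $(0,\varepsilon^l)$ via \eqref{eq:sel}, and compute the $x_2$- and $t$-integrals directly using $s\varepsilon\ll1$ and $T_2-T_1=\varepsilon^2$ from \eqref{eq:beta}. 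This yields $\lambda\int_{T_1}^{T_2}\int_{D_\varepsilon}|u_0|\leq C(\mu,\lambda,T_1)\varepsilon^{2+l}+\mathcal{O}(s\varepsilon^{2+l+l_0})$, and multiplying by the prefactor $\varepsilon^{\alpha_1\alpha_4 l_0}$ gives the asserted estimate $|I_6|\leq C(C_1,C_4)\varepsilon^{2+l+\alpha_1\alpha_4 l_0}+\mathcal{O}(s\varepsilon^{2+l+l_0+\alpha_1\alpha_4 l_0})$ (the fixed parameters $\mu,\lambda,T_1$ being suppressed in the final constant).

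I do not expect a genuine obstacle; the computation is a routine variant of Lemma~\ref{lemma:I5}. The single step requiring attention is the first paragraph, namely showing that the \emph{difference} $w-(H(u)-H(v))$, and not merely $H(u)-H(v)$, is $\mathcal{O}(\varepsilon^{\alpha_1\alpha_4 l_0})$. This works because $w$ itself vanishes on $\hat\Gamma$ and is $C^{1,\alpha_4}$, so its contribution is only $\mathcal{O}(\varepsilon)$ and is dominated by the slower-decaying H\"older remainder of $H$; consistently, $I_6$ degenerates to $0$ when $H$ is the identity.
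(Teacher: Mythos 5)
Your proposal is correct and follows essentially the same route as the paper: bound $|w-(H(u)-H(v))|$ uniformly by $C(C_1,C_4)\varepsilon^{\alpha_1\alpha_4 l_0}$ using the boundary conditions $w=0$, $u=v$ on $\hat\Gamma$ together with \eqref{Cond:uv} and \eqref{Cond:H}, then factor this out and estimate $\lambda\int_{T_1}^{T_2}\int_{D_\varepsilon}|u_0|$ via the mean value theorem, \eqref{eq:gamm2} and \eqref{eq:sel} to get $\varepsilon^{2+l}+\mathcal{O}(s\varepsilon^{2+l+l_0})$. The only cosmetic difference is that you control $|w|$ by integrating $\partial_2 w$ across the $\varepsilon$-width (giving $\mathcal{O}(\varepsilon)$), whereas the paper uses the H\"older expansion of $w$ at the base point (giving $\mathcal{O}(\varepsilon^{\alpha_4 l_0})$); both are dominated by $\varepsilon^{\alpha_1\alpha_4 l_0}$, so the final estimate is identical.
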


\begin{proof}
Using \eqref{eq:H}, the boundary condition $u = v$, and the H\"older expansion of $w$, we obtain 
\begin{equation*}
   \begin{aligned}
        &|w -(H(u)-H(v))| \\
        \leq & \Big(\|w\|_{C^{\alpha_4}(D_{\varepsilon} \times [0,T])} +\|H\|_{C^{\alpha_1}} ( \|u\|_{C^{\alpha_4}(D_{\varepsilon} \times [0,T])}^{\alpha_1}+ \|v\|_{C^{\alpha_4}(D_{\varepsilon} \times [0,T])}^{\alpha_1})\Big) (|\Bx|+|t-t_0|^{\frac{1}{2}})^{\alpha_1 \alpha_4}.
   \end{aligned}
\end{equation*}
Therefore, combining \eqref{Cond:uv}, \eqref{Cond:H}, \eqref{eq:l0}, and \eqref{eq:sel}, we have 
    \begin{equation*}
        \begin{aligned}
            |I_6| =& \left|\int_{T_1}^{T_2} \int _{D_{\varepsilon}}\lambda (w -(H(u)-H(v))) u_0\mathrm{d} \Bx \mathrm{d} t\right|\\
            \leq & \lambda (C_4 +2C_1 C_4^{\alpha_1}) \varepsilon^{\alpha_1 \alpha_4 l_0} \int_{T_1}^{T_2} \int _{D_{\varepsilon}} |u_0| \mathrm{d} \Bx \mathrm{d} t\\
            \leq & \frac{(C_4 +2C_1 C_4^{\alpha_1}) e^{\lambda T_1 + \mu^{-\frac{1}{2}}sd_2 \gamma(\xi_3)}}{\mu^{-1} s^2|d_1d_2|}\varepsilon^{\alpha_1 \alpha_4 l_0} \Big|(e^{\lambda \varepsilon}-1)(e^{\mu^{-\frac{1}{2}}sd_2\varepsilon}-1)(e^{\mu^{-\frac{1}{2}}sd_1\varepsilon^l}-1)\Big|\\
            \leq & C(C_1,C_4)  \varepsilon^{2 + l +\alpha_1 \alpha_4 l_0} + \mathcal{O}(s\varepsilon^{2+l+l_0+\alpha_1 \alpha_4 l_0}),
        \end{aligned}
    \end{equation*}
  where $\xi_3\in(0,\varepsilon^l)$, and $l_0$ is defined in \eqref{eq:l0}.
\end{proof}

We now prove Theorem \ref{main:thm} in two-dimensional case. The proof is divided into two independent parts: we first establish statement (a), and then prove statement (b). It is important to note that (a) and (b) are separate conclusions, neither depends on the other. 

\begin{proof}
We first prove part (a) of Theorem \ref{main:thm}. Note that $u^j$ is the solution to \eqref{eq:main-mu} with the configuration $(H, \mathbf{F}^j, f^j)$ for $j=1,2$. If the boundary measurements coincide, i.e., $\mathcal{M}_{\mathscr{F}^1, \mathfrak{f}^1} = \mathcal{M}_{\mathscr{F}^2, \mathfrak{f}^2}$, then $(u^1, u^2)$ constitutes a solution to the coupled PDE system \eqref{eq:uv} with $\mathbf{F}^{\prime} = \mathbf{F}^{1\prime}$, $\mathbf{G}^{\prime} = \mathbf{F}^{2\prime}$, $f = f^1$, $g = f^2$, and $h(\mathbf{x}, t) = h_{\mathbf{F}^1} - h_{\mathbf{F}^2}$, where $\mathbf{F}^{1\prime}$ and $\mathbf{F}^{2\prime}$ are as defined in \eqref{eq:2FG}.

Applying \eqref{I1-6}, \eqref{eq:I2-2}, and \eqref{eq:I4}, we obtain
    \begin{equation}\label{main:I1-6}
        I_1 + I_{22} -I_3 -I_{42} - I_{44} - I_{5} -I_{6} = I_{41}+I_{43} -I_{21},
    \end{equation}
which means that
\begin{equation}\notag
    |I_{43}| \leq |I_1| + |I_{22}| + |I_3| + |I_{42}| + |I_{44}| + |I_5|+ |I_6|+|I_{41}-I_{21}|.
\end{equation}
The combination of Lemmas \ref{lemma:I1}--\ref{lemma:I6} with equation \eqref{main:I1-6} yields the following result:
    \begin{align*}
       & |(f_2^{1\prime} -f_2^{2\prime})|_{(\Bx,t) = (\Bx_0, t_0)}| (1-e^{ \mu^{-\frac{1}{2}} d_1})  e^{\lambda T_1} e^{\mu^{-\frac{1}{2}} sd_2\gamma(\xi_1)} s \varepsilon^{2+l}\\
       \leq & |I_1| + |I_{22}|  + |I_3| + |I_{42}| + |I_{44}| + |I_5| +|I_6| +|I_{41}-I_{21}|+ \mathcal{O}(s^2 \varepsilon^{2+l+l_0})\\
       \leq & C(\mu,\|h\|_{L^{\infty}},C_1,C_2,C_3,C_4)\big(s^2\varepsilon^{3+l}+s^2\varepsilon^{2+l+(1+\alpha_4)l_0}+\varepsilon^{2+\alpha_4 l_0} +\varepsilon^{2+\alpha_3\alpha_4 l_0}+ \varepsilon^{2+l}\\
       &+s\varepsilon^{2+l+\alpha_3\alpha_4 l_0} +\varepsilon^{2+l+\alpha_1\alpha_4 l_0}\big)+\Oh(s^2\varepsilon^{2+l+l_0})+ \Oh(s^3 \varepsilon^{3+2l})+\Oh(s \varepsilon^{3+\alpha_3\alpha_4 l_0})\\
       &+\Oh( \varepsilon^{2+l+\alpha_2 \alpha_4 l_0}),
    \end{align*}
where positive constant $C( \mu,\|h\|_{L^{\infty}},C_1,C_2,C_3,C_4)$ depends on $\mu$, $\|h\|_{L^\infty}$, $C_1$, $C_2$, $C_3$, and $C_4$. Taking $s=\varepsilon^{-\beta}$, with
\begin{equation*}
    \beta =\begin{cases}l & \text { if } 0<l\leq \frac{1}{1+\alpha_3 \alpha_4}, \\ \frac{1}{2}\left[1+l\left(1-\alpha_3 \alpha_4\right)\right], & \text { if } \frac{1}{1+\alpha_3 \alpha_4}<l<1, \\ \frac{1}{2}\left(1+l-\alpha_3 \alpha_4\right), & \text { if } 1 \leq l<1+\alpha_3 \alpha_4. \end{cases}
\end{equation*}
 We conclude that
\begin{equation}\label{eq:result-f}
    |f_2^{1\prime}(\Bx_0,t_0,u(\Bx_0,t_0)) -f_2^{2\prime}(\Bx_0,t_0,v(\Bx_0,t_0))| \leq C(\mu,\|h\|_{L^{\infty}},C_1,C_2,C_3,C_4) \varepsilon^{\tau},
\end{equation}
where  \begin{equation}\label{eq:tau}
    \tau = \begin{cases}\alpha_3 \alpha_4l & \text { if } \,\,0<l\leq \frac{1}{1+\alpha_3 \alpha_4}, \\ \frac{1}{2}\left[1-l\left(1-\alpha_3 \alpha_4\right)\right] & \text { if } \,\, \frac{1}{1+\alpha_3 \alpha_4}<l<1, \\ \frac{1}{2}\left(1-l+\alpha_3 \alpha_4 \right) & \text { if } \,\, 1 \leq l<1+\alpha_3 \alpha_4. \end{cases}
\end{equation}
By the triangle inequality,
\begin{align}
&|f_2^{1\prime}(\Bx_0,t_0,u^1(\Bx_0,t_0)) -f_2^{2\prime}(\Bx_0,t_0,u^1(\Bx_0,t_0))|\\ \notag
\leq &|f_2^{1\prime}(\Bx_0,t_0,u^1(\Bx_0,t_0)) -f_2^{2\prime}(\Bx_0,t_0,u^2(\Bx_0,t_0))| + |f_2^{2\prime}(\Bx_0,t_0,u^1(\Bx_0,t_0)) -f_2^{2\prime}(\Bx_0,t_0,u^2(\Bx_0,t_0))|.
\end{align}
The boundary condition $u^1=u^2$ in Lemma \ref{lem:w} and the regularity of $u^1, u^2$ in \eqref{Cond:uv} imply
\begin{equation}\label{eq:minus}
|u^1(\Bx_0,t_0) -u^2(\Bx_0,t_0)| \leq C(C_4) \varepsilon^{l_0},
\end{equation}
where $l_0$ is defined in \eqref{eq:l0}. Consequently, utilizing \eqref{eq:minus} and the regularity of $g_2'$ in \eqref{Cond:FG} and \eqref{eq:2FG} yields that
\begin{equation}\label{eq:gminus}
|f_2^{2\prime}(\Bx_0,t_0,u^1(\Bx_0,t_0)) -f_2^{2\prime}(\Bx_0,t_0,u^2(\Bx_0,t_0))| \leq C(C_2,C_4) \varepsilon^{\alpha_3 l_0}.
\end{equation}
Note that $\tau \leq \alpha_3 l_0$ follows from \eqref{eq:l0} and \eqref{eq:tau}. Moreover, estimates \eqref{eq:result-f}--\eqref{eq:gminus} together imply
\begin{equation*}
|f_2^{1\prime}(\Bx_0,t_0,u^1(\Bx_0,t_0)) -f_2^{2\prime}(\Bx_0,t_0,u^1(\Bx_0,t_0))| \leq C(\mu,\|h\|_{L^{\infty}},C_1,C_2,C_3,C_4) \varepsilon^{\tau}.
\end{equation*}
Based on the definition in \eqref{eq:2FG}, we conclude that
\begin{equation}\label{eq:op FG}
    |\mathscr{F}_2^{1\prime}(u^1)(\Bx_0,t_0) -\mathscr{F}_2^{2\prime}(u^1)(\Bx_0,t_0)|\leq  C(\mu,\|h\|_{L^{\infty}},C_1,C_2,C_3,C_4) \varepsilon^{\tau}.
\end{equation}
The same estimate holds at $v(\Bx_0, t_0)$ by symmetry:
\begin{equation*}
|\mathscr{F}_2^{1\prime}(u^2)(\Bx_0,t_0) -\mathscr{F}_2^{2\prime}(u^2)(\Bx_0,t_0)|\leq  C(\mu,\|h\|_{L^{\infty}},C_1,C_2,C_3,C_4) \varepsilon^{\tau}.
\end{equation*}
Therefore, we obtain the estimate \eqref{Re:FG}.

We next prove statement (b) in Theorem \ref{main:thm}. From the condition that $\BF - \mathbf{G} = \mathbf{0}$ and $h = 0$ hold simultaneously, together with Lemma \ref{lemma:I1-6}, we obtain
\begin{equation}\label{I135}
    I_{1} = I_3 -I_5-I_6,
\end{equation}
where $I_1,I_3,I_5,I_6$ are defined in \eqref{I1-6}. 

Similar to the proof of the case (a), we analyze $I_1,\,I_3,\,I_5,\,I_6$ item by item. Firstly, for $I_1$, we slightly modify the analysis process of Lemma \ref{lemma:I1}; specifically, we modify the estimate of $\partial_{\nu}w$ on $\Gamma_2$ and $\Gamma_4$. Note that for any point $\Bx\in\mathscr{N}_{\varepsilon}^{\ell}$, there exists at least one point $\Bx_0\in \hat{\Gamma}$ such that the vector $\Bx-\Bx_0$ is parallel to $\nu_{\Bx_{0}}$, here $\nu_{\Bx_{0}}$ is the unit outward normal at $\Bx_0$, based on the geometric assumption in Theorem \ref{main:thm}(b). This implies that 
\begin{equation*}
    |\Bx-\Bx_0|\leq \varepsilon.
\end{equation*}
Then, by applying the boundary condition in \eqref{main:eqw}, we derive the expansion for any point $(\Bx,t) \in \overline{\mathscr{N}_{\varepsilon}^{\ell}} \times [T_1,T_2]$ for $\varepsilon \ll 1$:
\begin{align}\label{eq:Rwb}
        w(\Bx,t) &=  w(\Bx_0,t_0) + \nabla_{\Bx} w(\Bx_0,t_0) \cdot (\Bx-\Bx_0)+ \partial_t w(\Bx_0,t_0) (t-t_0)+\mathcal{R}_{w},\notag\\
        \partial_{\nu} w(\Bx,t) &= \partial_{\nu} w(\Bx_0,t_0) + \mathcal{R}_{\partial_{\nu}w},
\end{align}
with
\begin{align}
|\mathcal{R}_{w|\Gamma_2}|&\leq C_4 (|\Bx-\Bx_0| + |t-t_0|^{\frac{1}{2}}) ^{1+\alpha_4}\leq C_4 \varepsilon ^{1+\alpha_4},\notag\\ 
|\mathcal{R}_{\partial w | \Gamma_2}| &\leq C_4  (|\Bx-\Bx_0| + |t-t_0|^{\frac{1}{2}})^{\alpha_4}\leq C_4\varepsilon^{\alpha_4 },\label{eq:r2b}
\end{align}
for some $(\Bx_0,t_0)\in \hat{\Gamma}\times[T_1,T_2]$ and $T_2-T_1=\varepsilon^2$. Hence, utilizing \eqref{eq:Rwb} and \eqref{eq:r2b}, for any point $(\Bx,t) \in \overline{\mathscr{N}_{\varepsilon}^{\ell}} \times [T_1,T_2]$ for $\varepsilon \ll 1$, we obtain
\begin{equation}\label{eq:wb}
     w(\Bx,t) \leq C_4\varepsilon ^{1+\alpha_4},\,\, \partial_{\nu} w(\Bx,t) \leq C_4 \varepsilon ^{\alpha_4 }.
\end{equation}
Therefore, utilizing \eqref{eq:Gamma_2 par u0}, \eqref{eq:Gamma_4 par u0}, and \eqref{eq:wb}, we obtain
\begin{equation}\label{I1-b}
    |I_{1}| \leq C(\mu,C_4) (s\varepsilon^{3+\alpha_4} + \varepsilon^{2+\alpha_4 })+ \Oh(s^2\varepsilon^{4+\alpha_4}) +\Oh(s \varepsilon^{3+\alpha_4}),
\end{equation}
where $C(\mu,C_4)$ is a positive constant depending only on $\mu$ and $C_4$.

For the $I_3$  defined in \eqref{I1-6}, we have 
\begin{equation*}    
\begin{aligned}        
I_3 =& \int_{T_1}^{T_2} \int _{D_{\varepsilon}} (f^1 - f^2) u_0 \mathrm{d} \Bx \mathrm{d} t\\        
=& (f^1 - f^2)_{(\Bx,t)=(\Bx_0,t_0)}\int_{T_1}^{T_2} \int _{D_{\varepsilon}}  u_0 \mathrm{d} \Bx \mathrm{d} t + \int_{T_1}^{T_2} \int _{D_{\varepsilon}}   \delta_0(f^1 - f^2)u_0 \mathrm{d} \Bx \mathrm{d} t\\        
:=& I_{31} + I_{32}.    
\end{aligned}
\end{equation*}
Furthermore, using a method similar to that in \eqref{eq:ge1}, we deduce that
\begin{equation*}    
\begin{aligned}        
&\Big|\int_{T_1}^{T_2} \int _{D_{\varepsilon}}  u_0 \mathrm{d} \Bx \mathrm{d} t\Big|  \\        
&\geq \left|\frac{\cos{(\sqrt{s^2+\lambda}(d_1\gamma(\xi_4)-d_2\xi_4))}}{ \mu^{-\frac{1}{2}} \lambda d_1 s^2\sqrt{1+\frac{\lambda d_1^2}{s^2}}} e^{\lambda T_1} e^{\mu^{-\frac{1}{2}}sd_2\gamma(\xi_4)}(e^{\lambda \varepsilon} - 1)\right. \\
&\left. \quad \times  (e^{\mu^{-\frac{1}{2}} (sd_2 - \Bi \sqrt{s^2 + \lambda}d_1)\varepsilon} - 1) (e^{\mu^{-\frac{1}{2}} sd_1\varepsilon^l} - 1)\right |\\  
&\ge \mu^{\frac{1}{2}} (1-e^{ \mu^{-\frac{1}{2}} d_1}) e^{\lambda T_1}  e^{\mu^{-\frac{1}{2}}sd_2\gamma(\xi_4)}  \varepsilon^{2+l} + \Oh(s \varepsilon^{2+2l}),
\end{aligned}
\end{equation*}
where $\xi_4 \in (0,\varepsilon^l)$. And using the similar analysis of lemma \ref{lemma:I3}, we obtain 
\begin{equation*}    
\begin{aligned}        
&\left|\int_{T_1}^{T_2} \int _{D_{\varepsilon}}   \delta_0(f^1 - f^2)u_0 \mathrm{d} \Bx \mathrm{d} t\right|\\        
&\leq \frac{C(C_2,C_4)}{\mu^{-1}\lambda s^2|d_1d_2|}e^{\lambda T_1 } e^{sd_2 \gamma(\xi_5)}\varepsilon^{\alpha_2 \alpha_4 l_0}(e^{\lambda \varepsilon}-1)(e^{\mu^{-\frac{1}{2}}sd_2\varepsilon}-1)(e^{\mu^{-\frac{1}{2}}sd_1\varepsilon^l}-1)\\        
&\leq C(\mu,C_2,C_4) \varepsilon^{2+l+\alpha_2 \alpha_4 l} + \mathcal{O}(s\varepsilon^{2+2l+\alpha_2 \alpha_4 l}),    
\end{aligned}
\end{equation*}
where $\xi_5 \in (0,\varepsilon^l)$. Therefore, we have the following estimate
\begin{equation}\label{I3-b}    
I_3 = I_{31} + I_{32},
\end{equation}
with
\begin{equation*}    
\begin{aligned}        
|I_{31}| \geq & |(f^1 - f^2)_{(\Bx,t)=(\Bx_0,t_0)}|\mu^{\frac{1}{2}} (1-e^{ \mu^{-\frac{1}{2}} d_1}) e^{\lambda T_1}  e^{\mu^{-\frac{1}{2}}sd_2\gamma(\xi_4)}  \varepsilon^{2+l} +  \mathcal{O}(s\varepsilon^{2+2l}),  \\        
|I_{32}| \leq & C(\mu,C_2,C_4) \varepsilon^{2+l+\alpha_2\alpha_4 l} + \mathcal{O}(s\varepsilon^{2+2l+\alpha_2\alpha_4 l}),    
\end{aligned}
\end{equation*}
where $C(\mu,C_2,C_4)$ is a constant depending on $\mu,C_2$, and $C_4$.

As for $I_5$ defined in \eqref{I1-6}, by slightly modifying the estimate of $H(u)-H(v)$, we can obtain a new estimate of $I_5$. Since $H\in C^{1,\alpha}(\mathbb{C})$ in \eqref{Cond:Hb}, and in view of the regularity properties of $u$ in \eqref{Cond:uv}, we have the following estimate:
\begin{equation*}
    H(u^1) = H(u^1(\Bx_0,t_0)) + H_u(u^1(\mathbf{0},t_0))(u^1(\Bx,t)-u^1(\Bx_0,t_0)) + \mathcal{R}(\Bx,t),
\end{equation*}
where the remainder $\mathcal{R}(\Bx,t)$ satisfies
\begin{equation*}
    |\mathcal{R}(\Bx,t)| \leq \|H\|_{C^{1, \alpha_1}} \left|u^1(\Bx,t)-u^1\left(\Bx_0,t_0\right)\right|^{1+\alpha_1} \leq C_1^{\prime}C_{4}^{1+\alpha_1} \big(|\Bx-\Bx_0|+|t-t_0|^{\frac{1}{2}}\big)^{1+\alpha_1}.
\end{equation*}
A similar expansion holds for $H(u^2)$. Applying the boundary condition $u^1(\Bx_0,t_0) =u^2(\Bx_0,t_0)$, we get
\begin{equation}\label{eq:Hu-Hv}
    |H(u^1)-H(u^2)| \leq |H_u(u^1(\mathbf{0},t_0))(u^1(\Bx,t)-u^2(\Bx,t))| + 2C_1^{\prime}C_{4}^{1+\alpha_1} \big(|\Bx-\Bx_0|+|t-t_0|^{\frac{1}{2}}\big)^{1+\alpha_1},
\end{equation}
where $C_1^{\prime}$ is specified in \eqref{Cond:Hb}.
 Noting the regularity of $u,v $ in \eqref{Cond:uv} and applying \eqref{eq:wb} and \eqref{eq:Hu-Hv}, we deduce 
\begin{equation*}
    |H(u^1)-H(u^2)| \leq C(C_1^{\prime},C_4) \big(|\Bx-\Bx_0|+|t-t_0|^{\frac{1}{2}}\big)^{1+\min\{\alpha_1,\alpha_4\}} \leq  C(C_1^{\prime},C_4) \varepsilon^{1+\min\{\alpha_1,\alpha_4\}}.
\end{equation*}
Then, we obtain 
\begin{equation}\label{I5-b}
    |I_5| \leq C(\mu,C_1^{\prime},C_4)\varepsilon^{3+l+\min\{\alpha_1,\alpha_4\}}  + \mathcal{O}(s\varepsilon^{3+2l+\min\{\alpha_1,\alpha_4\}}),
\end{equation}
where $ C(\mu,C_1^{\prime},C_4)$ is a positive constant depending only on $\mu$, $C_1^{\prime}$, and $C_4$.

Similarly, applying \eqref{eq:wb} and \eqref{eq:Hu-Hv}, we obtain the following estimate for $I_6$:
\begin{equation}\label{I6-b}
    |I_6| \leq C(\mu,C_1^{\prime},C_2)\varepsilon^{3+l+\min\{\alpha_1,\alpha_4\}}  + \mathcal{O}(s\varepsilon^{3+2l+\min\{\alpha_1,\alpha_4\}}),
\end{equation}
where $ C(\mu,C_1^{\prime},C_4)$ is a positive constant depending only on $\lambda$, $\mu$, $C_1^{\prime}$, and $C_4$.

Combining \eqref{eq:gamm2}, \eqref{eq:beta}, \eqref{I135}, \eqref{I1-b}, \eqref{I3-b}, \eqref{I5-b}, and \eqref{I6-b}, we arrive at
\begin{align}
&|(f^1 - f^2)_{(\Bx,t)=(\Bx_0,t_0)}|\mu^{\frac{1}{2}} (1-e^{ \mu^{-\frac{1}{2}} d_1}) e^{\lambda T_1}  e^{\mu^{-\frac{1}{2}}sd_2\gamma(\xi_4)}  \varepsilon^{2+l}\notag\\
\leq & C(\mu,C_1^{\prime},C_2,C_4) \big(s\varepsilon^{3+\alpha_4} + \varepsilon^{2+\alpha_4} +\varepsilon^{2+l+\alpha_2\alpha_4 l}+ \varepsilon^{3+l+\min\{\alpha_1,\alpha_4\}}\big) \notag\\
&+\Oh(s^2\varepsilon^{4+\alpha_4}) +\Oh(s \varepsilon^{3+\alpha_4})  + \mathcal{O}(s\varepsilon^{3+l+\alpha_2\alpha_4}) + \mathcal{O}(s\varepsilon^{2+2l}),\notag
\end{align}
where $C$ depends on $C_1^{\prime},C_2,C_4$. Taking $s = \varepsilon^{-\beta}$ with $\beta = \frac{(1-\alpha_2\alpha_4)\alpha_4}{1+\alpha_2 \alpha_4}$, we obtain
\begin{equation*}
\begin{split}
    &|f^1(\Bx_0,t_0,u(\Bx_0,t_0),\nabla u(\Bx_0,t_0),\Delta u(\Bx_0,t_0))-f^2(\Bx_0,t_0,v(\Bx_0,t_0),\nabla v(\Bx_0,t_0),\Delta v(\Bx_0,t_0))|\\
    &\leq  C(\mu,C_1^{\prime},C_2,C_4) \varepsilon^{\tau_1},
\end{split}
\end{equation*}
where $\tau_1=\frac{\alpha_2\alpha_4^2}{1+\alpha_2 \alpha_4}$. Similar to the analysis of case $(a)$, we can get 
\begin{equation}\label{eq:fg 2D}
    |\mathfrak{f}^1(\tilde{u})(\Bx_0,t_0) -\mathfrak{f}^2(\tilde{u})(\Bx_0,t_0)| \leq  C(\mu,C_1^{\prime},C_2,C_4) \varepsilon^{\tau_1},
\end{equation}
where $\tilde{u} =u$ or $v$. Therefore, the estimate \eqref{Re:fg} is proved.
\end{proof}

\section{Proof of Theorem \ref{main:thm} in three dimensions}\label{sec:proof-3D}
Following an approach analogous to the two-dimensional case, we provide a detailed proof of Theorem \ref{main:thm} for the three-dimensional setting. For any subregion of $\mathscr{N}_{\varepsilon}^{\ell}$, define
\begin{equation*}
    {D}_{\varepsilon}:=\Omega_{\varepsilon} \times \gamma(x_2) \subset \mathscr{N}_{\varepsilon}^{\ell}, \, x_2 \in(0, \varepsilon^{l}) \subset I,
\end{equation*}
and lateral boundary of ${D}_{\varepsilon}$
\begin{equation*}
    {\Gamma}_{\varepsilon}:=\partial \Omega_{\varepsilon} \times \gamma(x_2),
\end{equation*}
where $I$ is defined in Definition \ref{def:ND}.
 
For the CGO solution defined in \eqref{u0} in three dimensions, we choose $\mathbf{d}$ and $\mathbf{d}^\bot$ as follows:
\begin{equation}\label{cond:d-3D}
    \mathbf{d}:=(d_1, d_2, d_3)^{\top} \in \mathbb{S}^2 \text { and } \mathbf{d}^{\top}:=(d_2, -d_1, 0)^{\top}
\end{equation}
such that $d_1,d_2,d_3<0$.

Throughout this section, we consistently assume that the unit vector $\mathbf{d}$ in the form of the CGO solution, as given by \eqref{u0}, satisfies \eqref{cond:d-3D}. Following the approach for 2D case, we first analyze the coupled system \eqref{eq:uv} to derive key estimates for its solutions. For notational consistency, we define 
\begin{align}\label{I1-6-3D}
        I_1 =&  \int_{T_1}^{T_2} \int_{{\Omega}_{\varepsilon} \cup {\Omega}_{\varepsilon}^{\prime}}\mu (w \partial_{\nu} u_0 - u_0 \partial_{\nu} w) \mathrm{d} \sigma \mathrm{d} t, I_2 = \int_{T_1}^{T_2} \int_{{\Omega}_{\varepsilon} \cup {\Omega}_{\varepsilon}^{\prime}}\nu \cdot (\BF^{\prime} -\mathbf{G}^{\prime}) u_0\mathrm{d} \sigma \mathrm{d} t,\nonumber\\
        I_3 =&  \int_{T_1}^{T_2} \int _{D_{\varepsilon}} (f - g) u_0 \mathrm{d} \Bx \mathrm{d} t,\qquad\qquad \ \
        I_4 = \int_{T_1}^{T_2} \int _{D_{\varepsilon}} (\BF^{\prime} -\mathbf{G}^{\prime}) \cdot \nabla u_0 \mathrm{d} \Bx \mathrm{d} t,\\
        I_5 =&  \int_{D_{\varepsilon}}(H(u)-H(v))u_0|_{T_1}^{T_2} \mathrm{d} \Bx,\qquad\quad\ \
        I_6 = \int_{T_1}^{T_2} \int _{D_{\varepsilon}}\lambda (w -(H(u)-H(v))) u_0\mathrm{d} \Bx \mathrm{d} t,\nonumber
\end{align}
where $\BF^{\prime}=(f_1^{\prime},f_2^{\prime},f_3^{\prime})^{\top}$ and $\mathbf{G}^{\prime}=(g_1^{\prime},g_2^{\prime},g_3^{\prime})^{\top}$ are defined similarly by \eqref{eq:3FG}.

In the three-dimensional case, we consider two geometries: nozzle domains and slab domains.
\subsection{ Nozzle domains} \label{sub:nozzle}
Recall that the diameter of $\Omega_{\varepsilon}$ is $\varepsilon$, as mentioned in Subsection \ref{sub:geometric}. Thus, the boundary of ${D}_{\varepsilon}$ is given by
\begin{equation*}
    \partial {D}_{\varepsilon}={\Gamma}_{\varepsilon} \cup {\Omega}_{\varepsilon} \cup {\Omega}_{\varepsilon}^{\prime},
\end{equation*}
where
\begin{equation*}
    \begin{aligned}
        & {\Omega}_{\varepsilon}=\left\{(x_1, x_2, x_3) \mid x_2=0, x_1 \in E, x_3 \in(0, \varepsilon)\right\}, \\
        & {\Omega}_{\varepsilon}^{\prime}=\{(x_1, x_2, x_3) \mid x_2=\varepsilon^{l}, x_1 \in E, x_3 \in(\gamma(\varepsilon^{l}), \gamma(\varepsilon^{l})+\varepsilon)\}.
        \end{aligned}
\end{equation*}
Here, the length (or diameter) of the interval $E$ is $\varepsilon$, which means $\mathrm{diam}(E) = \sup\{ |\Bx - \mathbf{y}| : \Bx, \mathbf{y} \in E \} = \varepsilon$. See Figure \ref{fg:02} for an illustration.

\begin{figure}[htbp]
    \centering
    \begin{subfigure}[c]{0.45\textwidth}
        \centering
        \includegraphics[width=\linewidth]{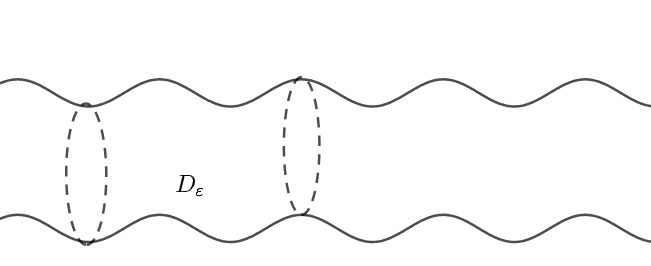}
    \end{subfigure}
    \hfill
    \begin{subfigure}[c]{0.45\textwidth}
        \centering
        \includegraphics[width=\linewidth]{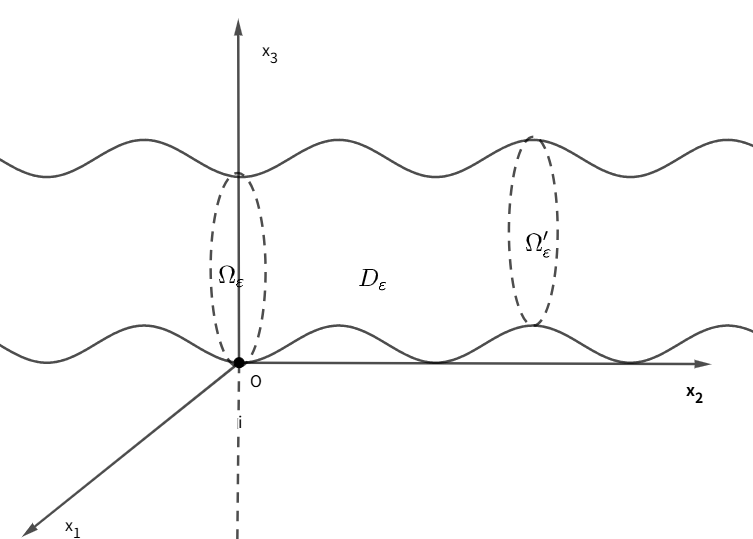}
    \end{subfigure}
    \caption{illustration of the nozzle domain in 3D}
    \label{fg:02}
\end{figure}

\begin{Lemma}\label{lemma:I1-3D}
   For $I_1$ defined in \eqref{I1-6-3D}, we have the following estimate:
   \begin{equation}\label{I1-te}
    |I_1|\leq C(\mu,\|h\|_{L^\infty},C_4) (s^2\varepsilon^{4+l}+ s^2\varepsilon^{3+l+(1+\alpha_4)l_0} + \varepsilon^{3+\alpha_4 l_0}) + \mathcal{O}(s^3\varepsilon^{4+2l}) + \mathcal{O}(s\varepsilon^{4+\alpha_4 l_0}),
   \end{equation}
where $l_0$ is defined in \eqref{eq:l0} and $C(\mu,\|h\|_{L^\infty},C_4)$ is a positive constant depending only on  $\mu$, $\|h\|_{L^\infty}$, and $C_4$.
\end{Lemma}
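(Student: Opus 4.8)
The plan is to repeat the proof of Lemma~\ref{lemma:I1} in the three-dimensional nozzle setting, the only structural change being that the two artificial end-caps $\Omega_\varepsilon$ (the cross-section of $D_\varepsilon$ at $x_2=0$) and $\Omega_\varepsilon^{\prime}$ (the cross-section at $x_2=\varepsilon^{l}$) are now $2$-dimensional regions of diameter $\varepsilon$, hence of area $\mathcal{O}(\varepsilon^{2})$, in place of the length-$\varepsilon$ segments $\Gamma_2,\Gamma_4$ of the planar case; this single extra factor of $\varepsilon$ is exactly what turns \eqref{I1} into \eqref{I1-te}.

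I would first split $I_1=\mu\int_{T_1}^{T_2}\!\int_{\Omega_\varepsilon\cup\Omega_\varepsilon^{\prime}} w\,\partial_\nu u_0\,\mathrm{d}\sigma\,\mathrm{d}t-\mu\int_{T_1}^{T_2}\!\int_{\Omega_\varepsilon\cup\Omega_\varepsilon^{\prime}} u_0\,\partial_\nu w\,\mathrm{d}\sigma\,\mathrm{d}t$ and estimate the two pieces. On the lateral boundary $\hat\Gamma=\partial\Omega_\varepsilon\times\gamma(I)$ one has $w=0$ and $\partial_\nu w=h$ by \eqref{main:eqw}. Fix a base point $\Bx_0$ on the edge $\partial\Omega_\varepsilon\cap\{x_2=0\}\subset\hat\Gamma$; since $w\equiv 0$ on $\hat\Gamma$, every derivative of $w$ tangent to $\hat\Gamma$ vanishes at $(\Bx_0,t_0)$ — in particular $\partial_{x_2}w$, the derivative of $w$ along $\partial\Omega_\varepsilon$, and $\partial_t w$ all vanish there — so the only surviving first-order datum of $w$ at $(\Bx_0,t_0)$ is the normal derivative $\partial_{\nu_{\Bx_0}}w=h(\Bx_0,t_0)$. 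Using $w\in C^{1,\alpha_4}$ from Lemma~\ref{lem:w} and $T_2-T_1=\varepsilon^2$, Taylor expansion around $(\Bx_0,t_0)$ gives, for $(\Bx,t)$ on either cap,
\[
 w(\Bx,t)=h(\Bx_0,t_0)\,\big[(\Bx-\Bx_0)\cdot\nu_{\Bx_0}\big]+\mathcal{R}_{w},\qquad \partial_\nu w(\Bx,t)=\mathcal{R}_{\partial w},
\]
where $|\Bx-\Bx_0|\lesssim\varepsilon$ on $\Omega_\varepsilon$ and $|\Bx-\Bx_0|\lesssim\varepsilon^{l}+\varepsilon\lesssim\varepsilon^{l_0}$ on $\Omega_\varepsilon^{\prime}$ ($l_0$ as in \eqref{eq:l0}), so that $|\mathcal{R}_{w}|\le C_4\varepsilon^{(1+\alpha_4)l_0}$ and $|\mathcal{R}_{\partial w}|\le C_4\varepsilon^{\alpha_4 l_0}$; since $(\Bx-\Bx_0)\cdot\nu_{\Bx_0}$ lies in the cross-sectional plane where $|\Bx-\Bx_0|\lesssim\varepsilon$, one also has $|w|\le\|h\|_{L^\infty}\varepsilon+C_4\varepsilon^{(1+\alpha_4)l_0}$ on the caps.

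Next I would expand $u_0$. From \eqref{u0} with $\mathbf{d},\mathbf{d}^{\perp}$ as in \eqref{cond:d-3D}, the exponent of $u_0$ on $\Omega_\varepsilon$ (where $x_2=0$) involves only $x_1$ and $x_3$, each confined to an interval of length $\varepsilon$, with coefficients of size $\mathcal{O}(s)$; since $s\varepsilon=\varepsilon^{1-\beta}\ll 1$ (cf. \eqref{eq:beta}) this yields $|u_0|\le e^{\lambda T_1}(1+\mathcal{O}(s\varepsilon))$, while the outward normals $(0,-1,0)$ on $\Omega_\varepsilon$ and $(0,1,0)$ on $\Omega_\varepsilon^{\prime}$ give $\partial_\nu u_0=\mp\mu^{-1/2}(sd_2-\Bi\sqrt{s^2+\lambda}\,d_1)u_0$, of magnitude $\mathcal{O}(s)$, and $u_0$ on $\Omega_\varepsilon^{\prime}$ equals $u_0$ on $\Omega_\varepsilon$ times $e^{\mu^{-1/2}(sd_2-\Bi\sqrt{s^2+\lambda}\,d_1)\varepsilon^{l}}$ up to the $\mathcal{O}(\varepsilon)$ shift in $x_3$ induced by $\gamma$, with $|e^{\mu^{-1/2}(sd_2-\Bi\sqrt{s^2+\lambda}\,d_1)\varepsilon^{l}}-1|\lesssim s\varepsilon^{l}$ as in \eqref{eq:esti}. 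Inserting these expansions and exploiting the near-cancellation of the two cap contributions exactly as in \eqref{eq:b24}, the $w\,\partial_\nu u_0$ piece factors into $[\mathcal{O}(s)]\cdot[\text{cap difference }\mathcal{O}(s\varepsilon^{l})]\cdot[\int_{T_1}^{T_2}e^{\lambda t}\mathrm{d}t]\cdot[\int_{\Omega_\varepsilon}|w|\,\mathrm{d}\Bx]$ with $\int_{\Omega_\varepsilon}|w|\,\mathrm{d}\Bx\lesssim(\|h\|_{L^\infty}\varepsilon+C_4\varepsilon^{(1+\alpha_4)l_0})\,\varepsilon^{2}$, which produces the $s^{2}\varepsilon^{4+l}$ and $s^{2}\varepsilon^{3+l+(1+\alpha_4)l_0}$ terms of \eqref{I1-te} together with the $\mathcal{O}(s^{3}\varepsilon^{4+2l})$ error from the next order of the exponential expansion; the $u_0\,\partial_\nu w$ piece is bounded by $C_4\varepsilon^{\alpha_4 l_0}\cdot e^{\lambda T_1}\cdot\varepsilon^{2}$ over the time slab, producing the $\varepsilon^{3+\alpha_4 l_0}$ term and the $\mathcal{O}(s\varepsilon^{4+\alpha_4 l_0})$ remainder. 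Summing the two pieces gives \eqref{I1-te}.

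The step I expect to be the main obstacle is the corner geometry where the $2$-dimensional cap $\Omega_\varepsilon$ meets $\hat\Gamma$: in the planar case the base point of the Taylor expansion was a single vertex and the leading behavior $w\simeq -h x_2$ was one linear term, whereas here $\partial\Omega_\varepsilon\cap\{x_2=0\}$ is a curve along which both $\nu_{\Bx_0}$ and $h(\Bx_0)$ vary, so one must either expand around the nearest boundary point for each $\Bx$ — checking that only the first-order vanishing of $w$ together with its $C^{1,\alpha_4}$ modulus of continuity, and no bound on the curvature of $\partial\Omega_\varepsilon$, enters the remainder, so that the merely Lipschitz cross-section still suffices — or partition $\Omega_\varepsilon$ along the level sets of $\mathrm{dist}(\cdot,\partial\Omega_\varepsilon)$. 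A secondary bookkeeping point is that $\Omega_\varepsilon$ and $\Omega_\varepsilon^{\prime}$ sit over $x_3$-intervals shifted by $\gamma(\varepsilon^{l})=\mathcal{O}(\varepsilon)$, which perturbs the cap-difference factor only at order $\mathcal{O}(\varepsilon)$ and is absorbed into the $\mathcal{O}(\cdot)$ terms of \eqref{I1-te}.
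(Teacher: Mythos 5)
Your proposal is correct and follows essentially the same route as the paper's proof: Taylor-expand $w$ and $\partial_\nu w$ on the two caps about a point of the lateral boundary using $w=0$, $\partial_\nu w=h$ from \eqref{main:eqw} together with the $C^{1,\alpha_4}$ bound, insert the explicit CGO expressions for $u_0$ and $\partial_\nu u_0$, and exploit the near-cancellation of the two cap contributions (factor $\mathcal{O}(s\varepsilon^{l})$), the extra $\varepsilon$ coming from the $\mathcal{O}(\varepsilon^{2})$ cap area, which reproduces exactly the terms in \eqref{I1-te}. The ``corner obstacle'' you flag is not an issue: the paper simply expands about the single transformed origin on $\hat\Gamma$, and the $C^{1,\alpha_4}$ remainder over the whole cap (of size $\mathcal{O}(\varepsilon^{(1+\alpha_4)l_0})$) already absorbs the variation of $h$ and $\nu$, precisely as in your main argument.
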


\begin{proof}
    For the CGO solution definded in \eqref{u0}, we can get that
    \begin{equation*}
        \begin{aligned}
            u_0(\Bx,t) &= e^{\lambda t} \cdot e^{\mu^{-\frac{1}{2}}(sd_1 + \Bi \sqrt{s^2 + \lambda}d_2)x_1 + \mu^{-\frac{1}{2}}(sd_2 - \Bi \sqrt{s^2 + \lambda}d_1)x_2 +\mu^{-\frac{1}{2}}sd_3 x_3},\\
            \nabla u_0(\Bx,t) &= \mu^{-\frac{1}{2}} (sd_1 + \Bi \sqrt{s^2 + \lambda}d_2,sd_2 - \Bi \sqrt{s^2 + \lambda}d_1,sd_3)^{\top} u_0(\Bx,t),
        \end{aligned}
    \end{equation*}
 On the boundary ${\Omega}_{\varepsilon}$, analogous to \eqref{eq:Gamma_2 w}--\eqref{eq:r2}, we obtain
    \begin{equation*}
        \begin{aligned}
            w(\Bx,t) =& w(\mathbf{0},t_0) + (x_1,0,x_3) \cdot (\partial_1 w(\mathbf{0},t_0),\partial_2 w(\mathbf{0},t_0),\partial_3 w(\mathbf{0},t_0)) \\ &+\partial_t w(\mathbf{0},t_0)(t-t_0)+ \mathcal{R}_{w|{\Omega}_{\varepsilon}} \\
            =& - h(\mathbf{0},u,v)_{(\Bx,t)=(\mathbf{0},t_0)}x_3 + \mathcal{R}_{w|{\Omega}_{\varepsilon}},\\
            \partial_{\nu} w(\Bx,t) =&(0,-1,0)\cdot (\partial_1 w(\Bx,t),\partial_2 w(\Bx,t),\partial_3 w(\Bx,t)) = - \partial_2 w(\Bx,t) \\
            =&- \partial_2 w(\mathbf{0},t_0) + \mathcal{R}_{\partial w|{\Omega}_{\varepsilon}} =\mathcal{R}_{\partial w|{\Omega}_{\varepsilon}}, \\
            \partial_{\nu} u_0 =&- \mu^{-\frac{1}{2}}(sd_2 - \Bi \sqrt{s^2 + \lambda}d_1) e^{\lambda t} e^{\mu^{-\frac{1}{2}}(sd_1 - \Bi \sqrt{s^2 + \lambda}d_2)x_1 +\mu^{-\frac{1}{2}}sd_3x_3 },
        \end{aligned}
    \end{equation*}
where
\begin{align*}
    |\mathcal{R}_{w|{\Omega}_{\varepsilon}}| &\leq C_4 ((|\Bx|+ |t-t_0|^{\frac{1}{2}}) ^{1+\alpha_4}) \leq C_4 \varepsilon ^{1+\alpha_4},\\
    |\mathcal{R}_{\partial w|{\Omega}_{\varepsilon}}| &\leq C_4 ((|\Bx|+ |t-t_0|^{\frac{1}{2}}) ^{\alpha_4}) \leq C_4 \varepsilon ^{\alpha_4},
\end{align*}
for some $t_0\in [T_1,T_2]$ and $T_2-T_1=\varepsilon^2$.

In a manner analogous to \eqref{eq:Gamma_4 w}--\eqref{eq:r4}, the following relation holds on the boundary ${\Omega}_{\varepsilon}^{\prime}$:
     \begin{equation*}
        \begin{aligned}
            w(\Bx,t) 
            =& - h(\mathbf{0},u,v)_{(\Bx,t)=(\mathbf{0},t_0)}x_3 + \mathcal{R}_{w|{\Omega}_{\varepsilon}^{\prime}},\\
            \partial_{\nu} w(\Bx,t) =&(0,1,0)\cdot (\partial_1 w(\Bx,t),\partial_2 w(\Bx,t),\partial_3 w(\Bx,t)) =  \partial_2 w(\Bx,t) \\[0.5mm]
            =& \partial_2 w(\mathbf{0},t_0) + \mathcal{R}_{\partial w|{\Omega}_{\varepsilon}^{\prime}} =\mathcal{R}_{\partial w|{\Omega}_{\varepsilon}^{\prime}}, \\[0.5mm]
            \partial_{\nu} u_0 =& \mu^{-\frac{1}{2}}(sd_2 - \Bi \sqrt{s^2 + \lambda}d_1) e^{\lambda t} e^{\mu^{-\frac{1}{2}}(sd_2-
            \Bi\sqrt{s^2+\lambda}d_1)\varepsilon^l} e^{\mu^{-\frac{1}{2}}(sd_1 - \Bi \sqrt{s^2 + \lambda}d_2)x_1 +\mu^{-\frac{1}{2}}sd_3x_3 },
        \end{aligned}
    \end{equation*}
where
\begin{align*}
    |\mathcal{R}_{w|{\Omega}_{\varepsilon}^{\prime}}| &\leq C_4 ((|\Bx|+ |t-t_0|^{\frac{1}{2}}) ^{1+\alpha_4}) \leq C_4 \varepsilon ^{(1+\alpha_4)l_0},\\
    |\mathcal{R}_{\partial w|{\Omega}_{\varepsilon}^{\prime}}| &\leq C_4 ((|\Bx|+ |t-t_0|^{\frac{1}{2}}) ^{\alpha_4}) \leq C_4 \varepsilon ^{\alpha_4 l_0}.
\end{align*}
where $l_0$ is denoted in \eqref{eq:l0}. Following the approach used in Lemma \ref{lemma:I1}, we obtain
    \begin{equation*}
        \begin{aligned}
        &\bigg|\int_{T_1}^{T_2}\int_{{\Omega}_{\varepsilon} \cup {\Omega}_{\varepsilon}^{\prime}} w \partial_{\nu} u_0  \mathrm{d} \sigma \mathrm{d} t\bigg| \\
        &\leq \bigg|   \mu^{-\frac{1}{2}}(sd_2 - \Bi \sqrt{s^2 + \lambda}d_1) (e^{\mu^{-\frac{1}{2}}(sd_2 - \Bi \sqrt{s^2 + \lambda}d_1)\varepsilon^l}-1) \frac{e^{\lambda T_1}}{\lambda} (e^{\lambda \varepsilon}-1) \int_{E}\int_0^{\varepsilon}\\
        &\quad \times ( -h(\mathbf{0},u,v)_{(\Bx,t)=(\mathbf{0},t)}x_3 + C_4 \varepsilon ^{(1+\alpha_4)l_0}) e^{\mu^{-\frac{1}{2}}(sd_1 - \Bi \sqrt{s^2 + \lambda}d_2)x_1 +\mu^{-\frac{1}{2}}sd_3x_3 } \mathrm{d} x_3 \mathrm{d} x_1\bigg|\\
        &\leq \frac{\mu^{-\frac{1}{2}} |h(\mathbf{0},u,v)_{(\Bx,t)=(\mathbf{0},t_0)}| e^{\lambda T_1}  s^2 \varepsilon^{l + 4}}{2} +C_4 \mu^{-\frac{1}{2}} e^{\lambda T_1}s^2 \varepsilon^{3+l+(1+\alpha_4)l_0} + \mathcal{O}(s^3\varepsilon^{4+2l}).
        \end{aligned}
    \end{equation*}
    Moreover, we have
    \begin{equation*}
      \left |\int_{T_1}^{T_2}\int_{{\Omega}_{\varepsilon} \cup {\Omega}_{\varepsilon}^{\prime}}  u_0 \partial_{\nu}w  \mathrm{d} \sigma \mathrm{d} t\right| \leq C_4 e^{\lambda T_1} \varepsilon^{3+\alpha_4 l_0} + \mathcal{O}(s \varepsilon^{4+\alpha_4 l_0}). 
    \end{equation*}
    
    From the above analysis, the definition of $I_1$ in \eqref{I1-6-3D}, and detailed calculations, we obtain \eqref{I1-te}.

\end{proof}

\begin{Lemma}\label{lemma:I2-3D}
    For term $I_2$, defined in \eqref{I1-6-3D}, can be decomposed as
\begin{equation}\label{est:I2-3D}
        I_2 = I_{21} + I_{22},
\end{equation}
with 
    \begin{align}
        I_{21}=
        &{(f_2^{\prime}-g_2^{\prime})_{(\Bx,t)=(\mathbf{0},t_0)}} \int_{T_1}^{T_2} \int_{{\Omega}_{\varepsilon}}  (e^{\mu^{-\frac{1}{2}}sd_3 \gamma(\varepsilon^l)}e^{\mu^{-\frac{1}{2}}(sd_2 - \Bi \sqrt{s^2 + \lambda}d_1)\varepsilon^l} -1) \notag\\
        &\times e^{\lambda t} \, e^{\mu^{-\frac{1}{2}}(sd_1 + \Bi \sqrt{s^2 + \lambda}d_2)x_1 +\mu^{-\frac{1}{2}}sd_3 x_3} \mathrm{d} x_3 \mathrm{d}x_1 \mathrm{d} t,\notag\\
        |I_{22}| \leq &  C(C_3,C_4) e^{\lambda T_1} \varepsilon^{3+\alpha_3 \alpha_4 l_0} + \mathcal{O}(s\varepsilon^{3+l+\alpha_3\alpha_4 l_0}),\label{eq:I22-TE}
    \end{align}
where $l_0$ is defined in \eqref{eq:l0}, and $C(\mu,C_3,C_4)$ is a positive constant depending only on $\mu$, $C_3$, and $C_4$.
\end{Lemma}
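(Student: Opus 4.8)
The plan is to follow the two–dimensional argument of Lemma~\ref{lemma:I2} almost verbatim, the only genuine change being that the transverse faces are now two–dimensional sets of area $\mathcal{O}(\varepsilon^2)$ rather than the one–dimensional segments $\Gamma_2,\Gamma_4$, which raises every power of $\varepsilon$ in the final bound by one. First I would split the surface integral defining $I_2$ in \eqref{I1-6-3D} over the two faces: on $\Omega_\varepsilon$ the outward unit normal is $(0,-1,0)$ and on $\Omega_\varepsilon'$ it is $(0,1,0)$, so $\nu\cdot(\BF'-\mathbf{G}')=\mp(f_2'-g_2')$ and only the second components of $\BF'$ and $\mathbf{G}'$ from \eqref{eq:3FG} survive. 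Thus
\[
I_2=\int_{T_1}^{T_2}\int_{\Omega_\varepsilon'}(f_2'-g_2')\,u_0\,\mathrm{d}\sigma\,\mathrm{d}t-\int_{T_1}^{T_2}\int_{\Omega_\varepsilon}(f_2'-g_2')\,u_0\,\mathrm{d}\sigma\,\mathrm{d}t,
\]
where $u_0$ restricted to $\Omega_\varepsilon$ (on which $x_2=0$) and to $\Omega_\varepsilon'$ (on which $x_2=\varepsilon^l$) is the explicit exponential recorded in the proof of Lemma~\ref{lemma:I1-3D}.

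Next I would apply the Hölder expansion of Lemma~\ref{lemma:exp}, which holds verbatim here under \eqref{Cond:FG} and \eqref{Cond:uv}, about the base point $(\mathbf{0},t_0)\in\Omega_\varepsilon\times[T_1,T_2]$: $f_2'(\Bx,t,u)-g_2'(\Bx,t,v)=(f_2'-g_2')|_{(\Bx,t)=(\mathbf{0},t_0)}+\delta_2(f_2'-g_2')$ with $|\delta_2(f_2'-g_2')|\le C(C_3,C_4)(|\Bx|+|t-t_0|^{1/2})^{\alpha_3\alpha_4}$. The constant part yields $I_{21}=(f_2'-g_2')|_{(\mathbf{0},t_0)}\big(\int_{T_1}^{T_2}\int_{\Omega_\varepsilon'}u_0-\int_{T_1}^{T_2}\int_{\Omega_\varepsilon}u_0\big)$; after the translation $x_3\mapsto\gamma(\varepsilon^l)+x_3$ the $\Omega_\varepsilon'$–integral is carried to the common domain $E\times(0,\varepsilon)$ and its integrand becomes that of the $\Omega_\varepsilon$–integral multiplied by $e^{\mu^{-1/2}sd_3\gamma(\varepsilon^l)}e^{\mu^{-1/2}(sd_2-\Bi\sqrt{s^2+\lambda}d_1)\varepsilon^l}$, so their difference produces exactly the claimed formula for $I_{21}$. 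The remaining Hölder–remainder piece is $I_{22}=\int_{T_1}^{T_2}\int_{\Omega_\varepsilon'}\delta_2(f_2'-g_2')u_0-\int_{T_1}^{T_2}\int_{\Omega_\varepsilon}\delta_2(f_2'-g_2')u_0$.

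For the estimate of $I_{22}$ I would bound the Hölder remainder uniformly on both faces by $C(C_3,C_4)\varepsilon^{\alpha_3\alpha_4 l_0}$ — on $\Omega_\varepsilon$ one has $|\Bx|+|t-t_0|^{1/2}\le C\varepsilon$ and $l_0\le1$, on $\Omega_\varepsilon'$ one has $|\Bx|+|t-t_0|^{1/2}\le C\varepsilon^{l_0}$, with $l_0=\min\{l,1\}$ as in \eqref{eq:l0} — and then estimate $|\int_{T_1}^{T_2}\int_{\Omega_\varepsilon}u_0\,\mathrm{d}\sigma\,\mathrm{d}t|$ and its $\Omega_\varepsilon'$ counterpart by taking moduli and factoring into elementary one–variable integrals, one in $t$, one in $x_1$ over $E$, and one in $x_3$ over $(0,\varepsilon)$. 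Using $s\varepsilon\ll1$ from \eqref{eq:beta} and $|\gamma(\varepsilon^l)|=\mathcal{O}(\varepsilon)$ (so that $|e^{\mu^{-1/2}sd_3\gamma(\varepsilon^l)}|=1+\mathcal{O}(s\varepsilon)$ and $|e^{\mu^{-1/2}(sd_2-\Bi\sqrt{s^2+\lambda}d_1)\varepsilon^l}|\le1$ with an $\mathcal{O}(s\varepsilon^l)$ correction when expanded), these integrals contribute the same orders as in the two–dimensional computation together with one additional power of $\varepsilon$ from the extra transverse integration over $E$; collecting them exactly as in the passage from \eqref{eq:I22} to the conclusion of Lemma~\ref{lemma:I2} gives \eqref{eq:I22-TE}. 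The main obstacle is nothing conceptual but rather the careful power–counting: keeping straight which exponential factors are $\mathcal{O}(1)$ and which are $\mathcal{O}(s\varepsilon)$ or $\mathcal{O}(s\varepsilon^l)$, and checking that the curvature term $e^{\mu^{-1/2}sd_3\gamma(\varepsilon^l)}$ affects neither the leading order of $I_{21}$ nor the error order of $I_{22}$; everything else is a direct transcription of the two–dimensional Lemma~\ref{lemma:I2}.
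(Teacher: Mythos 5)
Your proposal is correct and follows essentially the same route as the paper: split the flux integral over the two transverse faces $\Omega_\varepsilon$ and $\Omega_\varepsilon'$ using the normals $(0,\mp1,0)$ so that only $f_2'-g_2'$ survives, apply the H\"older expansion of Lemma~\ref{lemma:exp} at $(\mathbf{0},t_0)$ to split off $I_{21}$ and $I_{22}$, obtain the explicit factor $e^{\mu^{-1/2}sd_3\gamma(\varepsilon^l)}e^{\mu^{-1/2}(sd_2-\Bi\sqrt{s^2+\lambda}d_1)\varepsilon^l}-1$ by translating $\Omega_\varepsilon'$ onto $\Omega_\varepsilon$, and bound $I_{22}$ by the uniform remainder estimate $C\varepsilon^{\alpha_3\alpha_4 l_0}$ times factored one-variable exponential integrals, exactly as in the paper's passage from \eqref{eq:I2-3D} through \eqref{eq:I22-3D}. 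The power counting you describe reproduces the paper's bound (the extra transverse integration over $E$ supplying the additional power of $\varepsilon$ relative to Lemma~\ref{lemma:I2}), so no gap.
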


\begin{proof} 
 Parallel to the proof of Lemma \ref{lemma:I2}, we can show
  
        \begin{align}\label{eq:I2-3D}
         I_2 =& \int_{T_1}^{T_2} \int_{{\Omega}_{\varepsilon} \cup {\Omega}_{\varepsilon}^{\prime}}\nu \cdot (\BF^{\prime} -\mathbf{G}^{\prime}) u_0\mathrm{d} \sigma \mathrm{d} t \notag\\
         =& \int_{T_1}^{T_2} \int_{{\Omega}_{\varepsilon}} (0,-1,0) \cdot (\BF^{\prime} -\mathbf{G}^{\prime}) u_0\mathrm{d} \sigma \mathrm{d} t + \int_{T_1}^{T_2} \int_{{\Omega}_{\varepsilon}^{\prime}} (0,1,0) \cdot (\BF^{\prime} -\mathbf{G}^{\prime}) u_0\mathrm{d} \sigma \mathrm{d} t \notag \\
         =& \int_{T_1}^{T_2} \int_{{\Omega}_{\varepsilon}} -(f_2^{\prime}-g_2^{\prime}) u_0\mathrm{d} \sigma \mathrm{d} t + \int_{T_1}^{T_2} \int_{{\Omega}_{\varepsilon}^{\prime}} (f_2^{\prime}-g_2^{\prime}) u_0\mathrm{d} \sigma \mathrm{d} t \notag \\
         =&(f_2^{\prime}-g_2^{\prime})_{(\Bx,t)=(\mathbf{0},t_0)}\left(\int_{T_1}^{T_2}\int_{{\Omega}_{\varepsilon}^{\prime}} u_0\mathrm{d} \sigma \mathrm{d} t - \int_{T_1}^{T_2}\int_{{\Omega}_{\varepsilon}} u_0\mathrm{d} \sigma \mathrm{d} t\right) \notag\\
         &+\left( \int_{T_1}^{T_2} \int_{{\Omega}_{\varepsilon}^{\prime}} \delta_2 (f_2^{\prime}-g_2^{\prime}) u_0\mathrm{d} \sigma \mathrm{d} t - \int_{T_1}^{T_2} \int_{{\Omega}_{\varepsilon}} \delta_2 (f_2^{\prime}-g_2^{\prime}) u_0\mathrm{d} \sigma \mathrm{d} t\right) \notag\\
         :=& I_{21} +I_{22}.
        \end{align}
Furthermore, we have 
    \begin{align}\label{eq:I21-3D}
        &\int_{T_1}^{T_2}\int_{{\Omega}_{\varepsilon}^{\prime}} u_0\mathrm{d} \sigma \mathrm{d} t - \int_{T_1}^{T_2}\int_{{\Omega}_{\varepsilon}} u_0\mathrm{d} \sigma \mathrm{d} t \notag\\
        =& \int_{T_1}^{T_2} \int_{{\Omega}_{\varepsilon}} e^{\lambda t} e^{\mu^{-\frac{1}{2}}(sd_1 + \Bi \sqrt{s^2 + \lambda}d_2)x_1 + \mu^{-\frac{1}{2}}(sd_2 - \Bi \sqrt{s^2 + \lambda}d_1)\varepsilon^l +\mu^{-\frac{1}{2}}sd_3 (x_3 +\gamma(\varepsilon^l))} \mathrm{d} x_3 \mathrm{d}x_1 \mathrm{d} t\notag\\
        & - \int_{T_1}^{T_2} \int_{{\Omega}_{\varepsilon}} e^{\lambda t} e^{\mu^{-\frac{1}{2}}(sd_1 + \Bi \sqrt{s^2 + \lambda}d_2)x_1 +\mu^{-\frac{1}{2}}sd_3 x_3} \mathrm{d} x_3 \mathrm{d}x_1 \mathrm{d} t \notag\\
       =& \int_{T_1}^{T_2} \int_{{\Omega}_{\varepsilon}}  (e^{\mu^{-\frac{1}{2}}sd_3 \gamma(\varepsilon^l)}e^{\mu^{-\frac{1}{2}}(sd_2 - \Bi \sqrt{s^2 + \lambda}d_1)\varepsilon^l} -1) e^{\lambda t} \notag\\
       & \times e^{\mu^{-\frac{1}{2}}(sd_1 + \Bi \sqrt{s^2 + \lambda}d_2)x_1 +\mu^{-\frac{1}{2}}sd_3 x_3} \mathrm{d} x_3 \mathrm{d}x_1 \mathrm{d} t,
    \end{align}
and
\begin{equation}\label{eq:I22-3D}
    \begin{aligned}
        &\left|\int_{T_1}^{T_2} \int_{\Omega \cup {\Omega}_{\varepsilon}^{\prime}} \delta(f_1^{\prime}-g_1^{\prime}) u_0\mathrm{d} \sigma \mathrm{d} t\right|\\
        &\leq C(C_3,C_4) \varepsilon^{\alpha_3 \alpha_4 l_0} (e^{\mu^{-\frac{1}{2}}(sd_2 - \Bi \sqrt{s^2 + \lambda}d_1)\varepsilon^l}+1)  \Big|  \int_{T_1}^{T_2}\int_{0}^{\varepsilon} \int_{0}^{\varepsilon}e^{\lambda t} e^{\mu^{-\frac{1}{2}}s d_1 x_1  +\mu^{-\frac{1}{2}}sd_3 x_3} \mathrm{d} \sigma \mathrm{d} t\Big|\\
       & \leq  \frac{  C(C_3,C_4) \mu}{s^2 \lambda d_1 d_3} e^{\lambda T_1}  \big|(e^{\lambda \varepsilon -1}) (e^{\mu^{-\frac{1}{2}}sd_1\varepsilon}-1)(e^{\mu^{-\frac{1}{2}}sd_3\varepsilon} -1)\big| \varepsilon^{\alpha_3 \alpha_4 l_0}\\
        &\leq  C(C_3,C_4) e^{\lambda T_1} \varepsilon^{3+\alpha_3 \alpha_4 l_0} + \mathcal{O}(s\varepsilon^{3+l_0+\alpha_3 \alpha_4 l_0}).
    \end{aligned}
\end{equation}
Employing \eqref{eq:I2-3D}, \eqref{eq:I21-3D} and \eqref{eq:I22-3D}, we derive \eqref{est:I2-3D}.
\end{proof}

\begin{Lemma}
    For the $I_3$  defined in \eqref{I1-6-3D}, we have the following estimate
    \begin{equation}\label{eq:I3-TE}
       | I_3 | \leq  C(\mu,C_2,C_4)(\varepsilon^{3+l}+\varepsilon^{3+l+\alpha_2\alpha_4l_0})+ \mathcal{O}(s\varepsilon^{3+l+l_0}),
    \end{equation}
where $l_0$ is defined in \eqref{eq:l0}, and $C(\mu,C_2,C_4)$ is a positive constant depending only on $\mu$, $C_2$ and $C_4$.
\end{Lemma}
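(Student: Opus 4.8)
The plan is to follow the proof of Lemma~\ref{lemma:I3} almost verbatim, the only structural change being that the inner integral over the cross-section $D_\varepsilon$ is now three-dimensional: it runs over $x_1\in E$ (an interval of length $\varepsilon$), $x_2\in(0,\varepsilon^l)$, and $x_3\in(\gamma(x_2),\gamma(x_2)+\varepsilon)$. The extra factor $\varepsilon$ produced by the $x_1$-integration is exactly what upgrades the two-dimensional bound $\varepsilon^{2+l}$ to the claimed $\varepsilon^{3+l}$.

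First I would split the source difference via the Hölder expansion of Lemma~\ref{lemma:exp} (whose source-term part is dimension-free): fixing $(\Bx_0,t_0)\in\overline{D}_\varepsilon\times[T_1,T_2]$, write $f(\Bx,t,u,\nabla u)-g(\Bx,t,v,\nabla v)=(f-g)|_{(\Bx,t)=(\Bx_0,t_0)}+\delta_0(f-g)$ with $|\delta_0(f-g)|\leq C(C_2,C_4)(|\Bx-\Bx_0|+|t-t_0|^{1/2})^{\alpha_2\alpha_4}$. Since $\mathrm{diam}(D_\varepsilon)\leq C\varepsilon^{l_0}$ and $|t-t_0|^{1/2}\leq\varepsilon\leq\varepsilon^{l_0}$ on $D_\varepsilon\times[T_1,T_2]$ (recall $l_0=\min\{l,1\}$ from \eqref{eq:l0}), this gives $|\delta_0(f-g)|\leq C(C_2,C_4)\varepsilon^{\alpha_2\alpha_4 l_0}$, while $|(f-g)|_{(\Bx_0,t_0)}|\leq 2C_2$ by \eqref{Cond:fg0}. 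Hence $|I_3|\leq\big(2C_2+C(C_2,C_4)\varepsilon^{\alpha_2\alpha_4 l_0}\big)\,\big|\int_{T_1}^{T_2}\!\int_{D_\varepsilon}u_0\,\mathrm{d}\Bx\,\mathrm{d}t\big|$, so everything reduces to an upper bound on $\big|\int_{T_1}^{T_2}\!\int_{D_\varepsilon}u_0\,\mathrm{d}\Bx\,\mathrm{d}t\big|$.

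Second I would evaluate that integral by inserting the explicit three-dimensional CGO kernel recorded in the proof of Lemma~\ref{lemma:I1-3D} and factoring over the variables. The $t$-integral contributes $\lambda^{-1}e^{\lambda T_1}(e^{\lambda\varepsilon}-1)$, of order $\varepsilon$; the $x_1$-integral over $E$ has modulus $\leq C\varepsilon$, since the real part of its exponent coefficient is $\mu^{-1/2}sd_1<0$ and $s\varepsilon\ll1$; the inner $x_3$-integral equals $\frac{e^{\mu^{-1/2}sd_3\gamma(x_2)}}{\mu^{-1/2}sd_3}\big(e^{\mu^{-1/2}sd_3\varepsilon}-1\big)$, of modulus $\leq C\varepsilon$ (the $1/s$ prefactor absorbs the $s\varepsilon$ coming from the exponential difference, and $e^{\mu^{-1/2}sd_3\gamma(x_2)}$ is bounded by a constant because $|\gamma|=\Oh(\varepsilon)$ and $s\varepsilon\ll1$, or equivalently one extracts it by the mean value theorem as in Lemma~\ref{lemma:I3}); and the remaining $x_2$-integral over $(0,\varepsilon^l)$ is bounded by $\frac{|e^{\mu^{-1/2}(sd_2-\Bi\sqrt{s^2+\lambda}d_1)\varepsilon^l}-1|}{|\mu^{-1/2}(sd_2-\Bi\sqrt{s^2+\lambda}d_1)|}\leq C\varepsilon^l$, invoking \eqref{eq:sel}, the constraint $\beta\leq l$ from \eqref{eq:beta} (so that $s\varepsilon^l\leq1$), and $|sd_2-\Bi\sqrt{s^2+\lambda}d_1|\geq s\sqrt{d_1^2+d_2^2}$. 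Retaining the next-order term in each of these expansions (as in Lemma~\ref{lemma:I3}) produces a remainder of order $\Oh(s\varepsilon^{3+l+l_0})$. Multiplying the four factors gives $\big|\int_{T_1}^{T_2}\!\int_{D_\varepsilon}u_0\,\mathrm{d}\Bx\,\mathrm{d}t\big|\leq C(\mu)\varepsilon^{3+l}+\Oh(s\varepsilon^{3+l+l_0})$, and combining with the first step yields \eqref{eq:I3-TE}.

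The computation is essentially bookkeeping; the one place that demands care is the $x_2$-integration along the long direction $(0,\varepsilon^l)$, where the hypothesis $\beta\leq l$ is precisely what keeps $s\varepsilon^l$ bounded, so that the $1/s$ gained from the integration is not over-compensated — this is the same mechanism as in the two-dimensional Lemma~\ref{lemma:I3} by which the geometric scale $\varepsilon^l$ of the curve $\gamma$ registers in the final exponent.
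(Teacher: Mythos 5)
Your proposal is correct and follows essentially the same route as the paper: a Hölder expansion of $f-g$ about $(\Bx_0,t_0)$ via Lemma~\ref{lemma:exp} giving the $\varepsilon^{\alpha_2\alpha_4 l_0}$ remainder, followed by an explicit factorized evaluation of $\int_{T_1}^{T_2}\int_{D_\varepsilon}|u_0|$ over the product domain, with the $t$-, $x_1$-, $x_3$-integrals each contributing $\mathcal{O}(\varepsilon)$, the $x_2$-integral contributing $\mathcal{O}(\varepsilon^l)$ via \eqref{eq:sel} and $\beta\leq l$, and the $\gamma$-dependent exponential handled by the mean value theorem, yielding the leading $\varepsilon^{3+l}$ term and the $\mathcal{O}(s\varepsilon^{3+l+l_0})$ remainder exactly as in the paper's proof.
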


\begin{proof}
Analogous to the proof of Lemma \ref{lemma:I3}, we have
        \begin{align*}
            |I_3| =& \left|\int_{T_1}^{T_2} \int _{D_{\varepsilon}} (f - g) u_0 \mathrm{d} \Bx \mathrm{d} t\right|\\
            \leq & |(f - g)_{(\Bx,t)=(\Bx_0,t_0)}\int_{T_1}^{T_2} \int _{D_{\varepsilon}}  u_0 \mathrm{d} \Bx \mathrm{d} t| \\
            &+ C(C_2 C_4) \left |\int_{T_1}^{T_2} \int _{D_{\varepsilon}}  u_0  \big(|\Bx-\Bx_0| + |t-t_0|^{\frac{1}{2}}\big)^{\alpha_2 \alpha_4} \mathrm{d} \Bx \mathrm{d} t\right|\\
            \leq & (|(f - g)_{(\Bx,t)=(\Bx_0,t_0)}|+ C(C_2 C_4)  \varepsilon^{\alpha_2\alpha_4 l_0} ) \int_{T_1}^{T_2} \int_{D_{\varepsilon}} |e^{\lambda t}e^{\mu^{-\frac{1}{2}}(sd_1x_1 + sd_2x_2+sd_3x_3)} |\mathrm{d} \Bx \mathrm{d} t \\
            \leq & \frac{|(f - g)_{(\Bx,t)=(\Bx_0,t_0)}|+ C(C_2 C_4)  \varepsilon^{\alpha_2\alpha_4 l_0}}{|\mu^{-\frac{3}{2}} \lambda s^3d_1d_2d_3|}e^{\lambda T_1+\mu^{-\frac{1}{2}}sd_3 \gamma(\xi_6)} (e^{\lambda \varepsilon}-1)\\
            &\times (e^{\mu^{-\frac{1}{2}}sd_1\varepsilon}-1)(e^{\mu^{-\frac{1}{2}}sd_2\varepsilon^l}-1)(e^{\mu^{-\frac{1}{2}}sd_3\varepsilon}-1)\\
            \leq &  C(\mu,C_2,C_4)(\varepsilon^{3+l}+\varepsilon^{3+l+\alpha_2\alpha_4l_0}) + \mathcal{O}(s\varepsilon^{3+l+l_0}),
        \end{align*}
where $\xi_6 \in (0,\varepsilon^l)$.
\end{proof}

\begin{Lemma}\label{lemma:I4-3D}
    For the $I_4$ defined in \eqref{I1-6-3D}, we have the following decomposition
    \begin{equation}\label{eq:I4-3}
            I_4 = I_{41} + I_{42} + I_{43} + I_{44} + I_{45} + I_{46},
    \end{equation}
with
    \begin{align*}
          |I_{43} - I_{21}|  \leq& C(\mu,C_3)s^2 \varepsilon^{4+l} + \mathcal{O}(s^3 \varepsilon^{4+l+l_0}),\\
          |I_{4j}|  \leq & {C(\mu,C_3,C_4)} e^{\lambda T_1} s \varepsilon^{3+l+\alpha_3 \alpha_4 l_0} + \mathcal{O}(s^2 \varepsilon^{3+l+l_0+\alpha_3 \alpha_4 l_0}), \, j=2,4,6,\\
          |I_{41} + I_{45}|  \geq & \big||(f_1^{\prime} -g_1^{\prime})|_{(\Bx,t) = (\Bx_0, t_0)} sd_1 -|(f_3^{\prime} -g_3^{\prime})|_{(\Bx,t) = (\Bx_0, t_0)}sd_3\big|(1-e^{\mu^{-1/2}d_2}) e^{\lambda T_1} \varepsilon^{3+l}\\ &\times e^{\mu^{-\frac{1}{2}}sd_1 x_{1,\xi}}  e^{\mu^{-\frac{1}{2}}sd_3\gamma(x_{2,\xi})} +\mathcal{O}(s^2\varepsilon^{3+l+l_0}),  
        \end{align*}
where $f_k^{\prime}$ ($k=1,3$) are defined in \eqref{eq:3FG}, $g_k^{\prime}$ are defined analogously to $f_k^{\prime}$, $x_{2,\xi} \in (0,\varepsilon^l)$, $x_{1,\xi} \in E$, $\Bx_0$ is an arbitrary point in $\overline{D}_{\varepsilon}$, and $t_0 \in [T_1,T_2]$, with $l_0$ and $I_{21}$ defined by \eqref{eq:l0} and \eqref{eq:I2-3D}, respectively.
\end{Lemma}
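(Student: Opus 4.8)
The plan is to mirror the planar computation of $I_4$, now carrying the three components of $\nabla u_0$ dictated by \eqref{u0} and the choice \eqref{cond:d-3D}. Inserting $\nabla u_0=\mu^{-1/2}\bigl(sd_1+\Bi\sqrt{s^2+\lambda}\,d_2,\ sd_2-\Bi\sqrt{s^2+\lambda}\,d_1,\ sd_3\bigr)^{\top}u_0$ into $I_4=\int_{T_1}^{T_2}\!\!\int_{D_\varepsilon}(\BF'-\mathbf{G}')\cdot\nabla u_0$, I would apply the H\"older expansion of Lemma~\ref{lemma:exp} to each component $f_k'-g_k'$ ($k=1,2,3$), freezing its value at a base point $(\Bx_0,t_0)\in\overline{D}_\varepsilon\times[T_1,T_2]$ and isolating the remainder $\delta_k(f_k'-g_k')$. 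This yields $I_4=I_{41}+I_{42}+I_{43}+I_{44}+I_{45}+I_{46}$, where $I_{41},I_{43},I_{45}$ carry the frozen values of the first, second and third flux components paired with the amplitudes $sd_1+\Bi\sqrt{s^2+\lambda}d_2$, $sd_2-\Bi\sqrt{s^2+\lambda}d_1$, $sd_3$ respectively, and $I_{42},I_{44},I_{46}$ carry the corresponding remainders.

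The remainders $I_{42},I_{44},I_{46}$ are handled exactly as $I_{42}$ in the two-dimensional argument (cf.\ \eqref{eq:I42-2}): by Lemma~\ref{lemma:exp}, $|\delta_k(f_k'-g_k')|\le C(C_3,C_4)\varepsilon^{\alpha_3\alpha_4 l_0}$ on $D_\varepsilon\times[T_1,T_2]$, the product CGO integral factorises into elementary integrals in $t$, $x_1$, $x_3$ and $x_2$ that are estimated through \eqref{eq:sel}, and the amplitude factor of size $\sim s$ produces the stated $s\varepsilon^{3+l+\alpha_3\alpha_4 l_0}$ order, with the $\mathcal{O}(s^2\varepsilon^{\cdots})$ tail coming from the second-order Taylor remainder of $e^{\mu^{-1/2}(sd_2-\Bi\sqrt{s^2+\lambda}d_1)\varepsilon^l}-1$. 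For $I_{43}$ the key identity is $(sd_2-\Bi\sqrt{s^2+\lambda}d_1)u_0=\mu^{1/2}\partial_{x_2}u_0$, whence $I_{43}=(f_2'-g_2')|_{(\Bx_0,t_0)}\int_{T_1}^{T_2}\!\!\int_{D_\varepsilon}\partial_{x_2}u_0$. Since the $x_3$-slice of $D_\varepsilon$ at height $x_2$ is $(\gamma(x_2),\gamma(x_2)+\varepsilon)$, the $x_2$-integration is carried out by the Leibniz rule: the boundary contributions at $x_2=0$ and $x_2=\varepsilon^l$ reassemble, after the $x_3$-shift, into exactly $I_{21}$ of Lemma~\ref{lemma:I2-3D}, while the Leibniz correction $\int_0^{\varepsilon^l}\gamma'(x_2)\bigl(u_0|_{x_3=\gamma(x_2)+\varepsilon}-u_0|_{x_3=\gamma(x_2)}\bigr)dx_2$ is estimated via $|e^{\mu^{-1/2}sd_3\varepsilon}-1|\lesssim s\varepsilon$, the $C^2$-smallness of $\gamma$, and \eqref{eq:sel}, giving $|I_{43}-I_{21}|\le C(\mu,C_3)s^2\varepsilon^{4+l}+\mathcal{O}(s^3\varepsilon^{4+l+l_0})$.

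It remains to treat the leading pair $I_{41},I_{45}$, which I would keep together since both contain the common factor $\int_{T_1}^{T_2}\!\!\int_{D_\varepsilon}u_0$, so that $I_{41}+I_{45}=\mu^{-1/2}\bigl[(f_1'-g_1')|_{(\Bx_0,t_0)}(sd_1+\Bi\sqrt{s^2+\lambda}d_2)+(f_3'-g_3')|_{(\Bx_0,t_0)}sd_3\bigr]\int_{T_1}^{T_2}\!\!\int_{D_\varepsilon}u_0$. The lower bound is obtained in two steps: first $\bigl|\int_{T_1}^{T_2}\!\!\int_{D_\varepsilon}u_0\bigr|$ is bounded below by the real-part / mean-value-theorem device already used in \eqref{eq:ge1} — take $\Re$, peel off $\frac{e^{\lambda T_1}}{\lambda}(e^{\lambda\varepsilon}-1)$ and the decaying $x_1,x_3$ exponentials, and apply the mean value theorem to the $x_2$-integral to extract a cosine that stays away from zero under the CGO scaling \eqref{eq:beta}; then the bracket is bounded below by projecting it onto the real axis, which discards the purely imaginary $(f_1'-g_1')\Bi\sqrt{s^2+\lambda}d_2$ term and, using $d_1,d_3<0$, leaves $\bigl||f_1'-g_1'|\,sd_1-|f_3'-g_3'|\,sd_3\bigr|$, the $\mathcal{O}(s^2\varepsilon^{3+l+l_0})$ residual collecting the higher-order exponential Taylor terms. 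The main obstacle is precisely this last step: in contrast with all the preceding upper bounds, a genuine \emph{lower} bound for $|I_{41}+I_{45}|$ requires excluding cancellations — between the oscillatory real and imaginary parts of $\int_{D_\varepsilon}u_0$, and between the $f_1'$- and $f_3'$-contributions in the bracket. The choice \eqref{cond:d-3D} (all components of $\mathbf d$ strictly negative) is exactly what aligns the three decaying exponentials so the mean value theorem yields a non-degenerate amplitude, and the admissible range of $\beta$ in \eqref{eq:beta} must be exploited to keep the cosine factor non-vanishing; arranging both to hold simultaneously and uniformly in $\Bx_0\in\overline{D}_\varepsilon$ is the crux of the argument.
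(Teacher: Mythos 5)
Your decomposition of $I_4$ via Lemma~\ref{lemma:exp}, your treatment of the remainders $I_{42},I_{44},I_{46}$, and your lower bound for $|I_{41}+I_{45}|$ (keeping the common factor $\int_{T_1}^{T_2}\!\int_{D_\varepsilon}u_0$, lower-bounding it by the real-part/mean-value device of \eqref{eq:ge1}, then applying the reverse triangle inequality to the frozen coefficients with $d_1,d_3<0$) coincide with the paper's argument. The one step where you genuinely deviate is $|I_{43}-I_{21}|$, and that is where there is a gap. Your Leibniz identity itself is fine: with $(sd_2-\Bi\sqrt{s^2+\lambda}\,d_1)u_0=\mu^{1/2}\partial_{x_2}u_0$ and $\gamma(0)=0$, the boundary terms at $x_2=0,\varepsilon^l$ do reassemble into $I_{21}$, leaving the correction $\int_0^{\varepsilon^l}\gamma'(x_2)\bigl(u_0|_{x_3=\gamma(x_2)+\varepsilon}-u_0|_{x_3=\gamma(x_2)}\bigr)\,dx_2$ (integrated in $x_1,t$). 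But the bound you then assert does not follow from the ingredients you cite. That correction carries only one factor of $s$, from $|e^{\mu^{-1/2}sd_3\varepsilon}-1|\lesssim s\varepsilon$; the second factor of $s$ in the stated estimate $C(\mu,C_3)s^2\varepsilon^{4+l}$ would have to come from $\gamma'$, and no bound of the form $|\gamma'|\lesssim s\varepsilon$ is available. What the geometry gives (after the rotation, $\gamma'=0$ at the base point and $\gamma\in C^2$, hence $|\gamma'(x_2)|\lesssim x_2\le\varepsilon^l$; or merely \eqref{eq:gamm2}) yields at best $|I_{43}-I_{21}|\lesssim s\varepsilon\cdot\varepsilon\cdot\varepsilon\cdot\int_0^{\varepsilon^l}|\gamma'|\lesssim s\,\varepsilon^{3+2l}$, and $s\varepsilon^{3+2l}/(s^2\varepsilon^{4+l})=\varepsilon^{l-1+\beta}\to\infty$ whenever $\beta<1-l$ — in particular for every admissible $\beta\le l$ when $l<1/2$, including the value $\beta=l$ actually used in the final proof. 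So your route proves a weaker estimate (which would still suffice for Theorem~\ref{main:thm}, since the relative error against the main term is $\varepsilon^{l}$), but it does not prove the lemma as stated.

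The paper avoids this by never integrating by parts in $x_2$: it writes both $I_{43}$ and $I_{21}$ as explicit triple integrals over $x_2\in(0,\varepsilon^l)$ and bounds the pointwise difference $|e^{\mu^{-1/2}sd_3\gamma(x_2)}-e^{\mu^{-1/2}sd_3\gamma(\varepsilon^l)}|\le\mu^{-1/2}s\varepsilon+\mathcal{O}(s^2\varepsilon^2)$ while keeping the amplitude $\mu^{-1/2}(sd_2-\Bi\sqrt{s^2+\lambda}\,d_1)\sim s$ inside the integrand; the $x_2$-integration then contributes only $\varepsilon^l$ through \eqref{eq:sel}, and the product $s\varepsilon\cdot s\cdot\varepsilon^l\cdot\varepsilon^3$ gives exactly $s^2\varepsilon^{4+l}$ with tail $\mathcal{O}(s^3\varepsilon^{4+l+l_0})$. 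To close your argument you should either replace the Leibniz step by this direct comparison, or supply an additional mechanism (e.g.\ a further integration by parts in the $\gamma'$-integral that genuinely recovers a factor $s\varepsilon^l$), which your sketch — "the $C^2$-smallness of $\gamma$" — does not provide.
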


\begin{proof}
It follows from \eqref{Cond:FG} and \eqref{eq:2FG} that $\BF^{\prime},\mathbf{G}^{\prime} \in C^{\alpha_3}(\overline{D}_{\varepsilon})$. Combining Lemma \ref{lemma:exp} with \eqref{eq:cgo1} then gives the following H\"older expansion:
        \begin{align}\label{I-4-3D}
            I_4 = & \int_{T_1}^{T_2} \int _{D_{\varepsilon}} (\BF^{\prime} -\mathbf{G}^{\prime}) \cdot \nabla u_0 \mathrm{d} \Bx \mathrm{d} t \notag\\
            =&\int_{T_1}^{T_2} \int _{D_{\varepsilon}} \mu^{-\frac{1}{2}}(sd_1 + \Bi \sqrt{s^2 + \lambda}d_2) (f_1^{\prime} - g_1^{\prime}) u_0 \mathrm{d}\Bx \mathrm{d} t+\int_{T_1}^{T_2} \int _{D_{\varepsilon}} \mu^{-\frac{1}{2}}sd_3 (f_3^{\prime} - g_3^{\prime}) u_0 \mathrm{d}\Bx \mathrm{d} t \notag \\
            &+ \int_{T_1}^{T_2} \int _{D_{\varepsilon}} \mu^{-\frac{1}{2}}(sd_2 - \Bi \sqrt{s^2 + \lambda}d_1) (f_2^{\prime} - g_2^{\prime}) u_0 \mathrm{d} \Bx \mathrm{d} t\notag\\
            =& (f_1^{\prime} -g_1^{\prime})|_{(\Bx,t) = (\Bx_0, t_0)} \int_{T_1}^{T_2} \int _{D_{\varepsilon}} \mu^{-\frac{1}{2}}(sd_1 + \Bi \sqrt{s^2 + \lambda}d_2)  u_0 \mathrm{d} \Bx \mathrm{d} t \notag\\
            &+ \int_{T_1}^{T_2} \int _{D_{\varepsilon}} \delta_1 (f_1^{\prime} -g_1^{\prime}) \mu^{-\frac{1}{2}}(sd_1 + \Bi \sqrt{s^2 + \lambda}d_2)  u_0 \mathrm{d} \Bx \mathrm{d} t\notag \\
            & + (f_2^{\prime} -g_2^{\prime})|_{(\Bx,t) = (\mathbf{0}, t_0)} \int_{T_1}^{T_2} \int _{D_{\varepsilon}} \mu^{-\frac{1}{2}}(sd_2 - \Bi \sqrt{s^2 + \lambda}d_1)  u_0 \mathrm{d} \Bx \mathrm{d} t \notag\\
            &+ \int_{T_1}^{T_2} \int _{D_{\varepsilon}} \delta_2 (f_2^{\prime} -g_2^{\prime}) \mu^{-\frac{1}{2}}(sd_2 - \Bi \sqrt{s^2 + \lambda}d_1)  u_0 \mathrm{d} \Bx \mathrm{d} t\notag\\
            &+(f_3^{\prime} -g_3^{\prime})|_{(\Bx,t) = (\Bx_0, t_0)} \int_{T_1}^{T_2} \int _{D_{\varepsilon}} \mu^{-\frac{1}{2}}sd_3  u_0 \mathrm{d} \Bx \mathrm{d} t \notag\\
            &+ \int_{T_1}^{T_2} \int _{D_{\varepsilon}} \delta_3 (f_3^{\prime} -g_3^{\prime}) \mu^{-\frac{1}{2}}sd_3   u_0 \mathrm{d} \Bx \mathrm{d} t\notag\\
            :=& I_{41} + I_{42} + I_{43} + I_{44} + I_{45} + I_{46}.
        \end{align}
Combining \eqref{eq:I2-3D} and \eqref{eq:I21-3D}, we find
    \begin{align*}
        &|I_{43} -I_{21}| \\
       =&\bigg |(f_2^{\prime} -g_2^{\prime})|_{(\Bx,t) = (\mathbf{0}, t_0)} \Big( \int_{T_1}^{T_2} \int _{D_{\varepsilon}} \mu^{-\frac{1}{2}}(sd_2 - \Bi \sqrt{s^2 + \lambda}d_1)  u_0 \mathrm{d} \Bx \mathrm{d} t - \int_{T_1}^{T_2} \int_{{\Omega}_{\varepsilon}} e^{\lambda t} \\
       & \times (e^{\mu^{-\frac{1}{2}}sd_3 \gamma(\varepsilon^l)}e^{\mu^{-\frac{1}{2}}(sd_2 - \Bi \sqrt{s^2 + \lambda}d_1)\varepsilon^l} -1)  \, e^{\mu^{-\frac{1}{2}}(sd_1 + \Bi \sqrt{s^2 + \lambda}d_2)x_1 +\mu^{-\frac{1}{2}}sd_3 x_3} \mathrm{d} x_3 \mathrm{d}x_1 \mathrm{d} t \Big)\bigg|\\
       =&\left|(f_2^{\prime} -g_2^{\prime})|_{(\Bx,t) = (\mathbf{0}, t_0)}\right|\bigg|\int_{T_1}^{T_2} \int_{0}^{\varepsilon^l} \int_{\Omega_{\varepsilon}} \mu^{-\frac{1}{2}}(sd_2 - \Bi \sqrt{s^2 + \lambda}d_1) e^{\lambda t} \\ 
       &\times e^{\mu^{-\frac{1}{2}}(sd_1 + \Bi \sqrt{s^2 + \lambda}d_2)x_1}\, e^{\mu^{-\frac{1}{2}}(sd_2 - \Bi \sqrt{s^2 + \lambda}d_1)x_2} e^{\mu^{-\frac{1}{2}}sd_3(x_3+\gamma(x_2))} \mathrm{d} x_3 \mathrm{d} x_1 \mathrm{d}x_2 \mathrm{d} t\\
       &- \int_{T_1}^{T_2}\int_{0}^{\varepsilon^l} \int_{{\Omega}_{\varepsilon}} \mu^{-\frac{1}{2}}(sd_2 - \Bi \sqrt{s^2 + \lambda}d_1) e^{\mu^{-\frac{1}{2}}sd_3 \gamma(\varepsilon^l)}e^{\mu^{-\frac{1}{2}}(sd_2 - \Bi \sqrt{s^2 + \lambda}d_1)x_2}  e^{\lambda t} \\
       &\quad \times  e^{\mu^{-\frac{1}{2}}(sd_1 + \Bi \sqrt{s^2 + \lambda}d_2)x_1 +\mu^{-\frac{1}{2}}sd_3 x_3} \mathrm{d} x_3 \mathrm{d}x_1 \mathrm{d}x_2 \mathrm{d} t \bigg|\\
       =&\left|(f_2^{\prime} -g_2^{\prime})|_{(\Bx,t) = (\mathbf{0}, t_0)}\right| \bigg|\int_{T_1}^{T_2} \int_{0}^{\varepsilon^l} \int_{\Omega_{\varepsilon}} (e^{\mu^{-\frac{1}{2}}sd_3\gamma(x_2)} -e^{\mu^{-\frac{1}{2}}sd_3\gamma(\varepsilon^l)})\mu^{-\frac{1}{2}}e^{\lambda t} \mathrm{d} t\\
       & \times (sd_2 - \Bi \sqrt{s^2 + \lambda}d_1) e^{\mu^{-\frac{1}{2}}(sd_2 - \Bi \sqrt{s^2 + \lambda}d_1)x_2}  e^{\mu^{-\frac{1}{2}}(sd_1 + \Bi \sqrt{s^2 + \lambda}d_2)x_1 +\mu^{-\frac{1}{2}}sd_3 x_3} \mathrm{d} x_3 \mathrm{d}x_1 \mathrm{d}x_2 \bigg|.
    \end{align*}
Since $s \varepsilon \ll 1$, it follows from \eqref{eq:gamm2} that $s \gamma(x_2) \ll 1$ for any $x_2 \in [0, \varepsilon^l]$. Using this, we obtain
\[
|e^{\mu^{-\frac{1}{2}} s d_3 \gamma(x_2)} -e^{\mu^{-\frac{1}{2}} s d_3 \gamma(\varepsilon^l)}| \leq \mu^{-\frac{1}{2}} s \varepsilon + \mathcal{O}(s^2 \varepsilon^2).
\]
By direct calculation, we obtain
    \begin{align*}
        &\bigg |\int_{T_1}^{T_2} \int_{0}^{\varepsilon^l} \int_{\Omega_{\varepsilon}} \mu^{-\frac{1}{2}}(sd_2 - \Bi \sqrt{s^2 + \lambda}d_1) e^{\mu^{-\frac{1}{2}}(sd_2 - \Bi \sqrt{s^2 + \lambda}d_1)x_2}  e^{\lambda t} \\
        & \times e^{\mu^{-\frac{1}{2}}(sd_1 + \Bi \sqrt{s^2 + \lambda}d_2)x_1 +\mu^{-\frac{1}{2}}sd_3 x_3} \mathrm{d} x_3 \mathrm{d}x_1 \mathrm{d}x_2 \mathrm{d} t\bigg|\\
       \leq & \left |\frac{\mu}{s^2\lambda d_1d_3} e^{\lambda T_1}(e^{\lambda \varepsilon}-1)(e^{\mu^{-\frac{1}{2}}sd_1\varepsilon}-1)(e^{\mu^{-\frac{1}{2}}sd_2\varepsilon^l} -1)(e^{\mu^{-\frac{1}{2}}sd_3\varepsilon}-1)\right|\\
       \leq &C(\mu)s \varepsilon^{3+l} + \mathcal{O}(s^2 \varepsilon^{3+l+l_0}).
    \end{align*}
Therefore, we derive that $|I_{41}-I_{21}| \leq C(\mu,C_3)s^2 \varepsilon^{4+l} + \mathcal{O}(s^3 \varepsilon^{4+l+l_0})$.

For $I_{41}$ and $I_{45}$ defined in \eqref{I-4-3D}, the mean value theorem and H\"older expansion yield the following estimate:
\begin{equation*}
    \begin{aligned}
        |I_{41} + I_{45}| 
        &= \Bigl| \bigl((f_1' - g_1')|_{(\Bx,t)=(\Bx_0,t_0)} (sd_1 + \Bi \sqrt{s^2+\lambda}d_2) \\
        &\quad + (f_3' - g_3')|_{(\Bx,t)=(\Bx_0,t_0)} sd_3 \bigr) \Bigr|
     \left|\int_{T_1}^{T_2} \int_{D_{\varepsilon}} \mu^{-\frac{1}{2}} u_0  \mathrm{d}\Bx \mathrm{d}t\right| \\
        &\geq \Bigl| |(f_1' - g_1')|_{(\Bx,t)=(\Bx_0,t_0)} sd_1| \\
        &\quad - |(f_3' - g_3')|_{(\Bx,t)=(\Bx_0,t_0)} sd_3| \Bigr|
         \left|\int_{T_1}^{T_2} \int_{D_{\varepsilon}} \mu^{-\frac{1}{2}} u_0  \mathrm{d}\Bx \mathrm{d}t\right|,
    \end{aligned}
\end{equation*}
where
    \begin{align*}
        &\left|\int_{T_1}^{T_2} \int_{D_{\varepsilon}} \mu^{-\frac{1}{2}} u_0  \mathrm{d}\Bx \mathrm{d}t\right| \\
        =& \begin{multlined}[t]
            \left| \int_{T_1}^{T_2} \int_{0}^{\varepsilon^{l}} \int_{\Omega_{\varepsilon}} \mu^{-\frac{1}{2}} e^{\lambda t}  e^{\mu^{-\frac{1}{2}}(sd_1 + \Bi \sqrt{s^2 + \lambda}d_2)x_1} \right. \\
            \left. \times e^{\mu^{-\frac{1}{2}}(sd_2 - \Bi \sqrt{s^2 + \lambda}d_1)x_2 + \mu^{-\frac{1}{2}}sd_3(x_3+\gamma(x_2))}  \mathrm{d}x_3 \mathrm{d}x_1 \mathrm{d}x_2 \mathrm{d}t \right|
        \end{multlined} \\
        =& \begin{multlined}[t]
            \left| \int_{T_1}^{T_2} e^{\lambda t}  \mathrm{d}t \int_{0}^{\varepsilon^{l}} \mu^{-\frac{1}{2}} e^{\mu^{-\frac{1}{2}}(sd_2 - \Bi \sqrt{s^2 + \lambda}d_1)x_2} e^{\mu^{-\frac{1}{2}}sd_3\gamma(x_2)}  \mathrm{d}x_2 \right. \\
            \left. \times \int_{\Omega_{\varepsilon}} e^{\mu^{-\frac{1}{2}}(sd_1 + \Bi \sqrt{s^2 + \lambda}d_2)x_1 + \mu^{-\frac{1}{2}}sd_3x_3}  \mathrm{d}x_3 \mathrm{d}x_1 \right|
        \end{multlined} \\
        \geq& \Big|\frac{1}{\lambda sd_3} (e^{\mu^{-\frac{1}{2}}sd_3\varepsilon}-1) e^{\lambda T_1}(e^{\lambda \varepsilon}-1) \Big|
            \left| \Re \int_{0}^{\varepsilon^{l}} e^{\mu^{-\frac{1}{2}}(sd_2 - \Bi \sqrt{s^2 + \lambda}d_1)x_2} e^{\mu^{-\frac{1}{2}}sd_3\gamma(x_2)}  \mathrm{d}x_2 \right| \\
        &\times \left| \Re \int_{E} e^{\mu^{-\frac{1}{2}}(sd_1 + \Bi \sqrt{s^2 + \lambda}d_2)x_1}  \mathrm{d}x_1 \right| \\
        \geq& \begin{multlined}[t]
            \frac{\mu}{\lambda s^2 d_2 d_3} (e^{\mu^{-\frac{1}{2}}sd_3\varepsilon}-1) e^{\lambda T_1}(e^{\lambda \varepsilon}-1) 
            e^{\mu^{-\frac{1}{2}}sd_3\gamma(x_{2,\xi})} (e^{\mu^{-\frac{1}{2}}sd_2\varepsilon^l}-1) \\
            \times \cos((\sqrt{s^2 + \lambda}d_1)x_{2,\xi}) e^{\mu^{-\frac{1}{2}}sd_1 x_{1,\xi}} 
            \cos((\sqrt{s^2 + \lambda}d_2)x_{1,\xi}) |E|
        \end{multlined} \\
        \ge & (1 - e^{\mu^{-\frac{1}{2}}d_2}) e^{\lambda T_1}  e^{\mu^{-\frac{1}{2}}sd_1 x_{1,\xi}}  e^{\mu^{-\frac{1}{2}}sd_3\gamma(x_{2,\xi})} \varepsilon^{3+l} + \mathcal{O}(s\varepsilon^{3+l+l_0}),
    \end{align*}
where $x_{2,\xi} \in (0,\varepsilon^l)$ and $x_{1,\xi} \in E$. Similar to \eqref{eq:I42-2}, we can further obtain that
    \begin{align*}
        &|I_{42}|=\left|\int_{T_1}^{T_2} \int _{D_{\varepsilon}} \delta_1 (f_1^{\prime} -g_1^{\prime})\mu^{-\frac{1}{2}} (sd_1 + \Bi \sqrt{s^2 + \lambda}d_2)  u_0 \mathrm{d} \Bx \mathrm{d} t\right|\\
        \leq & C(C_3,C_4) \left|\int_{T_1}^{T_2} \int _{D_{\varepsilon}} \mu^{-\frac{1}{2}} (sd_1 + \Bi \sqrt{s^2 + \lambda}d_2)  u_0 (|\Bx -\Bx_0|+ |t-t_0|^{\frac{1}{2}})^{\alpha_3 \alpha_4} \mathrm{d} \Bx \mathrm{d} t\right|\\
        \leq & C(C_3,C_4) \varepsilon^{\alpha_3 \alpha_4 l_0}\left  |\int_{T_1}^{T_2} \int _{D_{\varepsilon}} \mu^{-\frac{1}{2}} (sd_1 + \Bi \sqrt{s^2 + \lambda}d_2)  u_0  \mathrm{d} \Bx \mathrm{d} t\right|\\
         \leq& \frac{ C(C_3,C_4) }{|\mu^{-1} \lambda s^2 d_2 d_3|} \varepsilon^{\alpha_3 \alpha_4 l_0} e^{\lambda T_1+\mu^{-1}sd_3 \gamma(\xi_6)} (e^{\lambda \varepsilon}-1)\\
         &\times (e^{\mu^{-\frac{1}{2}}sd_1\varepsilon}-1)(e^{\mu^{-\frac{1}{2}}sd_2\varepsilon^l}-1)(e^{\mu^{-\frac{1}{2}}sd_3\varepsilon}-1)\\  
         =& C(\mu, C_3,C_4) e^{\lambda T_1} s \varepsilon^{3+l+\alpha_3 \alpha_4 l_0} + \mathcal{O}(s^2 \varepsilon^{3+l+l_0+\alpha_3 \alpha_4 l_0}).
    \end{align*}
    Since the proofs for $I_{44}$ and $I_{46}$ proceed along similar lines to that of $I_{42}$, we omit the details.
\end{proof}

\begin{Lemma}\label{lemma:I5-3D}
    For the $I_5$  defined in \eqref{I1-6-3D}, we have the following estimate
    \begin{equation*}\label{eq:I5-TE}
       | I_5 |\leq  C(\mu,C_1,C_4)\varepsilon^{3+l+\alpha_1 \alpha_4 l_0}+  \mathcal{O}(s\varepsilon^{3+l+l_0+\alpha_1 \alpha_4 l_0}).
    \end{equation*}
\end{Lemma}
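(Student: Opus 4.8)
The plan is to follow the two-dimensional argument of Lemma~\ref{lemma:I5} almost verbatim; the only structural change is that the cross-sectional integration in the nozzle now contributes one extra power of $\varepsilon$ together with one extra factor of $s$ in the denominator. First I would fix a reference point $(\mathbf{0},t_0)\in\hat{\Gamma}\times[T_1,T_2]$ on the lateral boundary (as in Lemma~\ref{lemma:I1-3D}), with $T_2-T_1=\varepsilon^2$, and expand
\[
H(u)=H(u(\mathbf{0},t_0))+\delta H(u(\Bx,t)),\qquad H(v)=H(v(\mathbf{0},t_0))+\delta H(v(\Bx,t)),
\]
where, using \eqref{Cond:H} together with the $C^{1,\alpha_4}$ bound \eqref{Cond:uv} (so that $H\circ u$ is Hölder of exponent $\alpha_1\alpha_4$ in the parabolic metric, exactly as in the planar case),
\[
|\delta H(u(\Bx,t))|,\ |\delta H(v(\Bx,t))|\le C_1C_4^{\alpha_1}\bigl(|\Bx|+|t-t_0|^{1/2}\bigr)^{\alpha_1\alpha_4}\le C_1C_4^{\alpha_1}\varepsilon^{\alpha_1\alpha_4 l_0},
\]
the last inequality because $\mathrm{diam}(\Omega_\varepsilon)=\varepsilon$, $x_2\in(0,\varepsilon^l)$, $|\gamma|=\mathcal{O}(\varepsilon)$ by \eqref{eq:gamm2}, $|t-t_0|^{1/2}\le\varepsilon$, and $l_0=\min\{l,1\}$ by \eqref{eq:l0}. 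Since the Dirichlet data coincide on $\hat{\Gamma}$, one has $u(\mathbf{0},t_0)=v(\mathbf{0},t_0)$, so the two constant terms cancel and $|H(u)-H(v)|\le 2C_1C_4^{\alpha_1}\varepsilon^{\alpha_1\alpha_4 l_0}$ throughout $D_\varepsilon\times[T_1,T_2]$.

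Next I would insert this bound into the definition of $I_5$ in \eqref{I1-6-3D}. Writing $u_0$ as in \eqref{u0} with $\mathbf{d}$ as in \eqref{cond:d-3D}, the factor $u_0|_{T_1}^{T_2}$ produces $e^{\lambda T_1}(e^{\lambda\varepsilon}-1)$, and taking moduli (the oscillatory exponents $e^{\pm\Bi\sqrt{s^2+\lambda}(\cdot)}$ contribute unit modulus) gives
\[
|I_5|\le 2C_1C_4^{\alpha_1}\varepsilon^{\alpha_1\alpha_4 l_0}\,e^{\lambda T_1}\,|e^{\lambda\varepsilon}-1|\,\Bigl|\int_{D_\varepsilon}e^{\mu^{-1/2}(sd_1x_1+sd_2x_2+sd_3x_3)}\,\mathrm{d}\Bx\Bigr|.
\]
Then I would evaluate the spatial integral by Fubini over $D_\varepsilon=\Omega_\varepsilon\times\gamma(x_2)$, $x_2\in(0,\varepsilon^l)$: the $x_3$-integration over the fibre $(\gamma(x_2),\gamma(x_2)+\varepsilon)$ yields $e^{\mu^{-1/2}sd_3\gamma(x_2)}\cdot\dfrac{e^{\mu^{-1/2}sd_3\varepsilon}-1}{\mu^{-1/2}sd_3}$; since $|\gamma|=\mathcal{O}(\varepsilon)$ and $s\varepsilon\ll1$ under \eqref{eq:beta}, the mean value theorem lets me replace $e^{\mu^{-1/2}sd_3\gamma(x_2)}$ by $e^{\mu^{-1/2}sd_3\gamma(\xi)}$ for some $\xi\in(0,\varepsilon^l)$ and pull it outside. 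The remaining $x_1$-integration over $E$ (length $\varepsilon$) and $x_2$-integration over $(0,\varepsilon^l)$ contribute $\dfrac{e^{\mu^{-1/2}sd_1\varepsilon}-1}{\mu^{-1/2}sd_1}$ and $\dfrac{e^{\mu^{-1/2}sd_2\varepsilon^l}-1}{\mu^{-1/2}sd_2}$, exactly as in the derivations of Lemmas~\ref{lemma:I3} and \ref{lemma:I5}, so
\[
\Bigl|\int_{D_\varepsilon}e^{\mu^{-1/2}(sd_1x_1+sd_2x_2+sd_3x_3)}\,\mathrm{d}\Bx\Bigr|\le \frac{\mu^{3/2}}{s^3|d_1d_2d_3|}\,e^{\mu^{-1/2}sd_3\gamma(\xi)}\,\bigl|(e^{\mu^{-1/2}sd_1\varepsilon}-1)(e^{\mu^{-1/2}sd_2\varepsilon^l}-1)(e^{\mu^{-1/2}sd_3\varepsilon}-1)\bigr|.
\]

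Finally, combining the two displays with $|e^{\lambda\varepsilon}-1|=\lambda\varepsilon+\mathcal{O}(\varepsilon^2)$, $|e^{\mu^{-1/2}sd_i\varepsilon}-1|=\mathcal{O}(s\varepsilon)$ for $i=1,3$, the two-sided estimate \eqref{eq:sel} for $|e^{\mu^{-1/2}sd_2\varepsilon^l}-1|$, and $e^{\mu^{-1/2}sd_3\gamma(\xi)}=1+\mathcal{O}(s\varepsilon)$, the $s^3$ in the denominator cancels against the three $s$-factors $s\varepsilon\cdot s\varepsilon^l\cdot s\varepsilon$ in the numerator, the leading contribution of the spatial integral is of order $\varepsilon^{3+l}$, and the prefactor $\varepsilon^{\alpha_1\alpha_4 l_0}$ produces
\[
|I_5|\le C(\mu,C_1,C_4)\,\varepsilon^{3+l+\alpha_1\alpha_4 l_0}+\mathcal{O}\bigl(s\,\varepsilon^{3+l+l_0+\alpha_1\alpha_4 l_0}\bigr),
\]
which is the asserted bound. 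The only place where the three-dimensional nozzle geometry enters differently from the planar case is the treatment of the $\gamma(x_2)$-shift in the $x_3$-fibre; as in Lemma~\ref{lemma:I4-3D} this is dispatched by the mean value theorem together with $|\gamma|=\mathcal{O}(\varepsilon)$ and $s\varepsilon\ll1$, so I do not anticipate a genuine obstacle — only the bookkeeping of keeping the powers of $\varepsilon$ and $s$ aligned through the three exponential factors.
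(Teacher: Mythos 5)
Your proposal is correct and follows essentially the same route as the paper's proof: bound $|H(u)-H(v)|$ by $C_1C_4^{\alpha_1}\varepsilon^{\alpha_1\alpha_4 l_0}$ via the Hölder expansion about a lateral-boundary point (where the coinciding Dirichlet data cancel the constant terms), then evaluate the exponential integral over $D_\varepsilon$ by Fubini, extract the $\gamma$-shift factor $e^{\mu^{-1/2}sd_3\gamma(x_{2,\xi})}$ by the mean value theorem, and do the same $s$–$\varepsilon$ power counting with \eqref{eq:sel} and $s\varepsilon\ll1$. The only difference is that you spell out explicitly the cancellation $u(\mathbf{0},t_0)=v(\mathbf{0},t_0)$ and the fibrewise integration that the paper leaves implicit by citing the two-dimensional Lemma \ref{lemma:I5}.
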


\begin{proof}
Similar to the lemma \ref{lemma:I5}, we deduce
    \begin{align*}
        |I_5| =& \left| \int_{D_{\varepsilon}}(H(u)-H(v))u_0|_{T_1}^{T_2} \mathrm{d} \Bx\right|\\
        \leq & C(C_1 C_4) \varepsilon^{\alpha_1 \alpha_4 l_0} \frac{e^{\lambda T_1}}{\lambda} \Big|(e^{\lambda \varepsilon}-1) \int_{D_{\varepsilon}} e^{\mu^{-\frac{1}{2}}(sd_1x_1+sd_2x_2 +sd_3x_3)} \mathrm{d} \Bx \Big| \\
        \leq & \bigg|\frac{C_1 C_4^{\alpha_1}\mu^{\frac{3}{2}}}{s^3d_1d_2d_3}  e^{\lambda T_1}e^{\mu^{-\frac{1}{2}}sd_3\gamma(x_{2,\xi})} \varepsilon^{\alpha_1 \alpha_4 l_0} (e^{\lambda \varepsilon}-1)(e^{\mu^{-\frac{1}{2}}sd_1\varepsilon}-1)\\
        &\,\,\times (e^{\mu^{-\frac{1}{2}}sd_2\varepsilon^{l}}-1)(e^{\mu^{-\frac{1}{2}}sd_3\varepsilon}-1)\bigg|\\
        \leq &C(\mu,C_1,C_4) \varepsilon^{3+l+\alpha_1 \alpha_4 l_0}+  \mathcal{O}(s\varepsilon^{3+l+l_0+\alpha_1 \alpha_4 l_0}).
    \end{align*}
\end{proof}
Combining the analysis of Lemma \ref{lemma:I5-3D} with Lemma \ref{lemma:I6}, we have the following lemma:
\begin{Lemma}\label{lemma:I6-3D}
    For the $I_6$ defined in \eqref{I1-6-3D}, we have the following estimate:
    \begin{equation}\label{eq:I6-TE}
        |I_6| \leq C(C_1,C_4)  \varepsilon^{3 + l +\alpha_1 \alpha_4 l_0} + \mathcal{O}(s\varepsilon^{3+l+l_0+\alpha_1 \alpha_4 l_0}),
    \end{equation}
    where $l_0$ is defined in \eqref{eq:l0}.
\end{Lemma}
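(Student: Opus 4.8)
The plan is to mirror the two-dimensional argument of Lemma~\ref{lemma:I6}, importing wholesale the volume estimate for $|u_0|$ already obtained in the proof of Lemma~\ref{lemma:I5-3D}. First I would record the pointwise bound on the integrand of $I_6$. Expanding $H(u)$ and $H(v)$ about the base point $(\mathbf{0},t_0)$ as in \eqref{eq:H}, using the boundary identity $u=v$, and combining it with the H\"older expansion of $w$, one gets
\begin{equation*}
|w-(H(u)-H(v))| \leq \Big(\|w\|_{C^{\alpha_4}(D_\varepsilon\times[0,T])} + \|H\|_{C^{\alpha_1}}\big(\|u\|_{C^{\alpha_4}(D_\varepsilon\times[0,T])}^{\alpha_1} + \|v\|_{C^{\alpha_4}(D_\varepsilon\times[0,T])}^{\alpha_1}\big)\Big)\big(|\Bx|+|t-t_0|^{1/2}\big)^{\alpha_1\alpha_4}.
\end{equation*}
Invoking the a priori bounds \eqref{Cond:uv} and \eqref{Cond:H}, and noting that $D_\varepsilon$ has spatial diameter $\mathcal{O}(\varepsilon^{l_0})$ (cross-section $\Omega_\varepsilon$ of size $\varepsilon$, length $\varepsilon^l$ along $\gamma$, with $l_0=\min\{l,1\}$ as in \eqref{eq:l0}) and time-span $T_2-T_1=\varepsilon^2$, this yields the uniform bound $|w-(H(u)-H(v))|\leq C(C_1,C_4)\varepsilon^{\alpha_1\alpha_4 l_0}$ on $D_\varepsilon\times[T_1,T_2]$.

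Next I would pull this constant out of the integral defining $I_6$, so that
\begin{equation*}
|I_6| \leq \lambda\, C(C_1,C_4)\,\varepsilon^{\alpha_1\alpha_4 l_0}\int_{T_1}^{T_2}\int_{D_\varepsilon}|u_0|\,\mathrm{d}\Bx\,\mathrm{d}t,
\end{equation*}
and then reuse the computation from the proof of Lemma~\ref{lemma:I5-3D}: factorizing the integral over $t\in(T_1,T_2)$, $x_1\in E$, $x_2\in(0,\varepsilon^l)$ and $x_3\in(0,\varepsilon)$ (the $\gamma(x_2)$-shift in the $x_3$-direction being handled by the mean value theorem together with $s\gamma(x_2)\ll1$, which follows from \eqref{eq:gamm2}), one obtains, for some $\xi\in(0,\varepsilon^l)$,
\begin{equation*}
\int_{T_1}^{T_2}\int_{D_\varepsilon}|u_0|\,\mathrm{d}\Bx\,\mathrm{d}t \leq \frac{\mu^{3/2}}{\lambda s^3|d_1d_2d_3|}\,e^{\lambda T_1}\,e^{\mu^{-1/2}sd_3\gamma(\xi)}\big|(e^{\lambda\varepsilon}-1)(e^{\mu^{-1/2}sd_1\varepsilon}-1)(e^{\mu^{-1/2}sd_2\varepsilon^l}-1)(e^{\mu^{-1/2}sd_3\varepsilon}-1)\big|,
\end{equation*}
where $e^{\mu^{-1/2}sd_3\gamma(\xi)}\leq1$ because $d_3<0$ and $\gamma(\xi)=\mathcal{O}(\varepsilon)>0$; note the factor $\lambda$ in front of $I_6$ is absorbed by the $1/\lambda$ coming from $\int_{T_1}^{T_2}e^{\lambda t}\,\mathrm{d}t$.

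Finally I would Taylor-expand each exponential under the constraint \eqref{eq:beta} on $s$: $e^{\lambda\varepsilon}-1=\lambda\varepsilon+\mathcal{O}(\varepsilon^2)$, $e^{\mu^{-1/2}sd_1\varepsilon}-1=\mu^{-1/2}sd_1\varepsilon+\mathcal{O}(s^2\varepsilon^2)$, and analogously for the $d_2\varepsilon^l$ and $d_3\varepsilon$ factors. The product of the four leading terms cancels the prefactor $\mu^{3/2}/(\lambda s^3|d_1d_2d_3|)$ up to the power $\varepsilon^{3+l}$, so the main contribution to $|I_6|$ is $C(C_1,C_4)\varepsilon^{3+l+\alpha_1\alpha_4 l_0}$; tracking the next-order correction — one extra factor of $s\varepsilon$ or $s\varepsilon^l$, whichever is smaller in the relevant range of $l$ — produces exactly the remainder $\mathcal{O}(s\varepsilon^{3+l+l_0+\alpha_1\alpha_4 l_0})$ in \eqref{eq:I6-TE}. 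This argument introduces no genuinely new difficulty beyond the two cited lemmas; the only points that require a little care are confirming that the H\"older factor $(|\Bx|+|t-t_0|^{1/2})^{\alpha_1\alpha_4}$ is dominated by $\varepsilon^{\alpha_1\alpha_4 l_0}$ (rather than $\varepsilon^{\alpha_1\alpha_4}$) over $D_\varepsilon\times[T_1,T_2]$ — this is precisely where the exponent $l_0$ in \eqref{eq:I6-TE} enters — and keeping the lower-order bookkeeping consistent with the admissible range \eqref{eq:beta} for $\beta$.
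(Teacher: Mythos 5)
Your argument is correct and coincides with the paper's intended proof: the paper gives no separate proof of this lemma, stating only that it follows by combining the pointwise bound on $|w-(H(u)-H(v))|$ from the two-dimensional Lemma~\ref{lemma:I6} with the volume-integral estimate for $\int_{T_1}^{T_2}\int_{D_\varepsilon}|u_0|$ worked out in Lemma~\ref{lemma:I5-3D}, which is exactly what you do. The only cosmetic slips (the unneeded sign claim on $\gamma(\xi)$, which is anyway harmless since $s\gamma(\xi)\ll1$, and "smaller" where the dominant correction $s\varepsilon^{l_0}$ is in fact the larger of $s\varepsilon$ and $s\varepsilon^{l}$) do not affect the stated exponents.
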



\begin{proof}[\textbf{Proof of Theorem \ref{main:thm} in nozzle domains}]
We first prove part (a) of Theorem \ref{main:thm}. Since \(\mathcal{M}_{\mathscr{F}^1, \mathfrak{f}^1} = \mathcal{M}_{\mathscr{F}^2, \mathfrak{f}^2}\) 
and \(u^j\) is the solution to \eqref{eq:main-mu} with the configuration \((H, \mathbf{F}^j, f^j)\) 
for \(j=1,2\), the pair \((u^1, u^2)\) solves the coupled system \eqref{eq:uv} with 
\(\mathbf{F}' = \mathbf{F}^{1\prime}\), \(\mathbf{G}' = \mathbf{F}^{2\prime}\), \(f = f^1\), \(g = f^2\), 
and \(h(\mathbf{x}, t) = h_{\mathbf{F}^1} - h_{\mathbf{F}^2}\) , where $\mathbf{F}^{1\prime}$ and $\mathbf{F}^{2\prime}$ are as defined in \eqref{eq:3FG}.

   From Lemma \ref{lemma:I1-6}, we obtain the identity
\begin{equation}\notag
I_1 + I_2 = I_3 + I_4 - I_5 - I_6
\end{equation}
for the quantities $I_j$ $(j = 1, \ldots, 6)$ defined in \eqref{I1-6-3D}. Based on the above analysis, we have
\begin{equation*}
    |I_{41} + I_{45}| \leq |I_1| + |I_{22}| + |I_{3}| + |I_{42}| + |I_{44}| + |I_{46}| + |I_{43} - I_{21}| + |I_{5}| +|I_6|.
\end{equation*}
Combining it with Lemma \ref{lemma:I1-3D}--Lemma \ref{lemma:I6-3D}, we obtain that
    \begin{align}\label{thm:pt3}
        &\big||{(f_1^{1\prime}-f_1^{2\prime})}|_{(\Bx,t) = (\Bx_0, t_0)} sd_1 -|{(f_3^{1\prime}-f_3^{2\prime})}|_{(\Bx,t) = (\Bx_0, t_0)}sd_3\big| \varepsilon^{3+l}\notag\\
        &\times(1 - e^{\mu^{-\frac{1}{2}}d_2}) e^{\lambda T_1}  e^{\mu^{-\frac{1}{2}}sd_1 x_{1,\xi}}  e^{\mu^{-\frac{1}{2}}sd_3\gamma(x_{2,\xi})} \notag\\
        \leq & C(\mu,\|{h_{\BF^1}-h_{\BF^2}}\|_{L^{\infty}},C_1,C_2,C_3,C_4) (s^2 \varepsilon^{4+l}+s^2 \varepsilon^{3+l+(1+\alpha_4)l_0} + \varepsilon^{3+\alpha_4 l_0} + \varepsilon^{3+\alpha_3 \alpha_4l_0}\notag\\
       & \quad  +\varepsilon^{3+ l} + s \varepsilon^{3+l+\alpha_3 \alpha_4l_0} + \varepsilon^{3+l+\alpha_1 \alpha_4l_0} + \varepsilon^{3+l+\alpha_2 \alpha_4l_0})+ \Oh(s^3 \varepsilon^{4+2l})+\Oh(s^3 \varepsilon^{4+l+l_0})\notag\\
        & \quad  + \Oh(s \varepsilon^{4+\alpha_4 l_0})+\Oh(s \varepsilon^{3+l+\alpha_3 \alpha_4 l_0}) + \Oh(s^2 \varepsilon^{3+l+l_0+\alpha_3 \alpha_4 l_0})+\Oh(s \varepsilon^{3+l+l_0+\alpha_1 \alpha_4 l_0}) ,
    \end{align}
where $C(\mu,\|{h}\|_{L^{\infty}},C_1,C_2,C_3,C_4)$ is related to $\mu$, $\|{h}\|_{L^{\infty}} $, $C_1$, $C_2$, $C_3$ and $C_4$.

From the definition of $d_1$ and $d_3$ in \eqref{cond:d-3D}, and taking the same $s$ in \eqref{eq:beta} as in the two-dimensional case, we obtain:
\begin{equation*}
    \begin{aligned}
        |{f_1^{1\prime}(\Bx_0,t_0,u^1(\Bx_0,t_0)) -f_1^{2\prime}(\Bx_0,t_0,u^2(\Bx_0,t_0))}| \leq & C(\mu,\|{h}\|_{L^{\infty}}\|_{L^{\infty}},C_1,C_2,C_3,C_4) \varepsilon^{\tau }, \\
        |{f_3^{1\prime}(\Bx_0,t_0,u^1(\Bx_0,t_0)) -f_3^{2\prime}(\Bx_0,t_0,u^2(\Bx_0,t_0))}| \leq & C(\mu,\|{h}\|_{L^{\infty}},C_1,C_2,C_3,C_4)\varepsilon^{\tau },
    \end{aligned}
\end{equation*}
where the value of $\tau$ in \eqref{eq:tau} is consistent with the two-dimensional case. Similar to the analysis in the two-dimensional case, we can obtain
\begin{align*}
   |{\mathscr{F}_1^{1'}(u^j)(\Bx_0,t_0) - \mathscr{F}_1^{2'}(u^j)(\Bx_0,t_0)}| \leq & C(\mu,\|h\|_{L^{\infty}},C_1,C_2,C_3,C_4) \varepsilon^{\tau },\\
   | \mathscr{F}_3^{1'}(u^j)(\Bx_0,t_0) - \mathscr{F}_3^{2'}(u^j)(\Bx_0,t_0)| \leq & C(\mu,\|h\|_{L^{\infty}},C_1,C_2,C_3,C_4) \varepsilon^{\tau },
\end{align*}
where $j=1, 2$.

As for the proof of Theorem \ref{main:thm}(b), since the proof process is similar to that of the two-dimensional case, we omit it here for the sake of brevity.
\end{proof}

\subsection{Slab~ domains}

Recall that $\Omega_{\varepsilon}$ is a square with a side length of $\varepsilon$, as mentioned in subsection \ref{sub:geometric} . Thus, the boundary of ${D}_{\varepsilon}$ is given by

\begin{equation*}
    \partial {D}_{\varepsilon}={\Gamma}_{V} \cup {\Omega}_{\varepsilon} \cup {\Omega}_{\varepsilon}^{\prime} \cup {\Gamma}_f \cup {\Gamma}_b,
\end{equation*}
where $\Gamma_V$ denotes the upper and lower surfaces of $D_{\varepsilon}$, and 
\begin{equation*}
    \begin{aligned}
        & {\Omega}_{\varepsilon}=\left\{(x_1, x_2, x_3) \mid x_2=0, x_1 \in(0, \varepsilon), x_3 \in(\gamma(0), \gamma(0  )+ \varepsilon)\right\}, \\
        & {\Omega}_{\varepsilon}^{\prime}=\left\{(x_1, x_2, x_3) \mid x_2=\varepsilon^{l}, x_1 \in(0, \varepsilon), x_3 \in(\gamma(\varepsilon^l ), \gamma(\varepsilon^l)+ \varepsilon)\right\}, \\
        & {\Gamma}_f=\left\{(x_1, x_2, x_3) \mid x_1=\varepsilon, x_2 \in(0, \varepsilon^{l},) x_3 \in(\gamma(x_2), \gamma(x_2)+\varepsilon)\right\}, \\
        & {\Gamma}_b=\left\{(x_1, x_2, x_3) \mid x_1=0, x_2 \in(0, \varepsilon^{l},) x_3 \in(\gamma(x_2), \gamma(x_2)+\varepsilon)\right\}.\\
        \end{aligned}
\end{equation*}

\begin{figure}[htbp]
    \centering
    \begin{subfigure}[b]{0.45\textwidth}
        \centering
        \includegraphics[width=\linewidth]{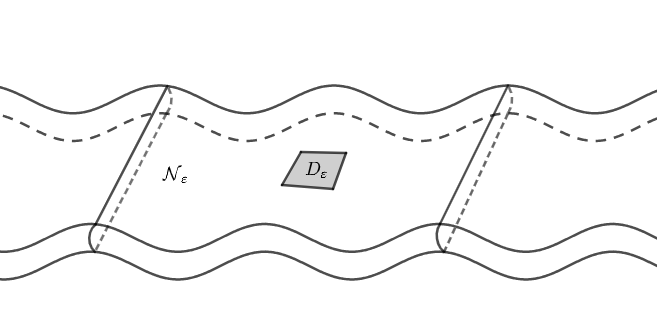}
        \caption{slab~domain}
    \end{subfigure}
    \hfill
    \begin{subfigure}[b]{0.45\textwidth}
        \centering
        \includegraphics[width=\linewidth]{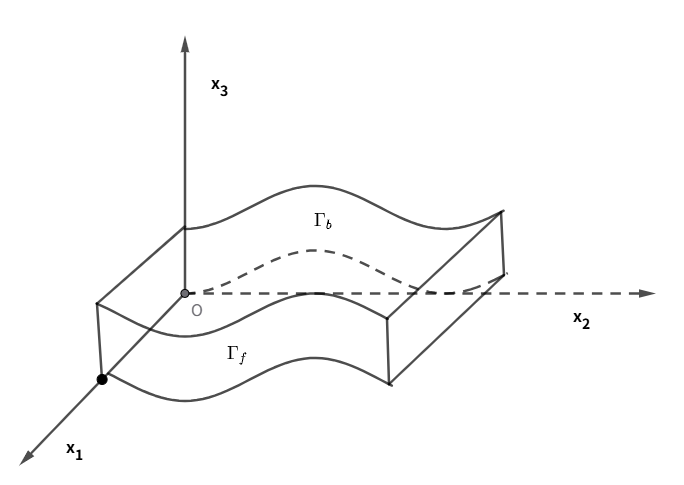}
        \caption{$D_\varepsilon $}
    \end{subfigure}
    \caption{illustration of the geometry in 3D}
    \label{fg:03}
\end{figure}

Compared to the analysis for nozzle domains in Subsection \ref{sub:nozzle}, we need to specifically analyze the volume integral $I_4$ defined in Lemma \ref{lemma:I4-3D} and the boundary integrals on $\Gamma_f$ and $\Gamma_b$. The estimates for all other terms remain the same as in the nozzle-domain case. Define 
\begin{equation}\label{I7-8}
       I_7 =\int_{T_1}^{T_2} \int_{\Gamma_f \cup \Gamma_b} \mu (w \partial_{\nu} u_0 - u_0 \partial_{\nu} w )\mathrm{d} \sigma \mathrm{d} t,\,\,\,
       I_8 =\int_{T_1}^{T_2} \int_{\Gamma_f \cup \Gamma_b}\nu \cdot (\BF^{\prime} -\mathbf{G}^{\prime}) u_0\mathrm{d} \sigma \mathrm{d} t.
\end{equation}

\begin{Lemma}\label{lemma:I7-3D}
    For boundary term $I_7$ defined in \eqref{I7-8}, we have the following estimate: 
    \begin{equation}\label{eq:I7-3D}
     |I_7|\leq C\left(\mu, \|h\|_{L^{\infty},} C_4\right)\left(s^2 \varepsilon^{4+l}+s^2 \varepsilon^{3+l+\left(1+\alpha_4\right) l_0}+\varepsilon^{3+\alpha_4 l_0}\right)+\mathcal{O}\left(s^3 \varepsilon^{4+2 l}\right)+\mathcal{O}\left(s \varepsilon^{4+\alpha_4 l_0}\right).
    \end{equation}
 \end{Lemma}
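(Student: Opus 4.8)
The plan is to establish \eqref{eq:I7-3D} by repeating, \emph{mutatis mutandis}, the argument behind the bound on $I_1$ in Lemma~\ref{lemma:I1-3D}, with the two faces $\Gamma_f$ (at $x_1=\varepsilon$) and $\Gamma_b$ (at $x_1=0$) now playing the role of the transverse caps $\Omega_\varepsilon,\Omega_\varepsilon'$. First I would insert the explicit three-dimensional CGO solution from \eqref{u0} (with $\mathbf d,\mathbf d^{\perp}$ as in \eqref{cond:d-3D}) into $I_7$ of \eqref{I7-8}: on $\Gamma_b$ one has $\partial_\nu=-\partial_1$, so $\partial_\nu u_0=-\mu^{-1/2}(sd_1+\Bi\sqrt{s^2+\lambda}d_2)u_0$, while on $\Gamma_f$ the same identity holds with $\partial_\nu=+\partial_1$ and with $u_0$ carrying the extra factor $e^{\mu^{-1/2}(sd_1+\Bi\sqrt{s^2+\lambda}d_2)\varepsilon}$ — the direct analogue of the factor $e^{\mu^{-1/2}(sd_2-\Bi\sqrt{s^2+\lambda}d_1)\varepsilon^l}$ that separated the two caps in Lemma~\ref{lemma:I1-3D}.

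Next I would Taylor-expand $w$ and $\partial_\nu w$ on $\Gamma_f\cup\Gamma_b$ about a corner point $(\mathbf 0,t_0)$ of $\hat\Gamma$, using the boundary conditions $w=0$, $\partial_\nu w=h$ on $\hat\Gamma$ together with the $C^{1,\alpha_4}$-regularity of $w$ (cf. \eqref{Cond:uv} and Lemma~\ref{lem:w}), exactly as in \eqref{eq:Gamma_2 w}--\eqref{eq:r4} and their three-dimensional counterparts. Since $w$ and its tangential and time first derivatives vanish at $(\mathbf 0,t_0)$, the leading term is $w=-h(\mathbf 0,u,v)|_{(\Bx,t)=(\mathbf 0,t_0)}\,x_3+\mathcal R_w$ with $|\mathcal R_w|\le C_4(|\Bx|+|t-t_0|^{1/2})^{1+\alpha_4}\le C_4\varepsilon^{(1+\alpha_4)l_0}$ on $\overline D_\varepsilon\times[T_1,T_2]$, while $\partial_\nu w=\mathcal R_{\partial w}$ with $|\mathcal R_{\partial w}|\le C_4\varepsilon^{\alpha_4 l_0}$, where $l_0$ is as in \eqref{eq:l0}, $|\Bx-\mathbf 0|\lesssim\varepsilon^{l_0}$ on $\overline D_\varepsilon$, and $|t-t_0|\le\varepsilon^2$. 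Then I would split $I_7$ into its $\int w\,\partial_\nu u_0$ and $\int u_0\,\partial_\nu w$ pieces. In the first, the $\Gamma_f$ and $\Gamma_b$ contributions of the common leading term $-h(\mathbf 0)x_3$ combine into that term times $e^{\mu^{-1/2}(sd_1+\Bi\sqrt{s^2+\lambda}d_2)\varepsilon}-1=\mathcal O(s\varepsilon)$; the remaining integrals over $x_2\in(0,\varepsilon^l)$, $x_3\in(\gamma(x_2),\gamma(x_2)+\varepsilon)$ and $t\in(T_1,T_2)$ with $T_2-T_1=\varepsilon^2$ are elementary exponential integrals of the type handled in \eqref{eq:esti}--\eqref{eq:part1}, and the curvature factors $e^{\mu^{-1/2}sd_j\gamma(x_2)}$ are $\mathcal O(1)$ since $s\varepsilon\ll1$ and $|\gamma|=\mathcal O(\varepsilon)$ by \eqref{eq:gamm2}; this yields the $s^2\varepsilon^{4+l}$ term, the $s^2\varepsilon^{3+l+(1+\alpha_4)l_0}$ term (from $\mathcal R_w$) and the $\mathcal O(s^3\varepsilon^{4+2l})$ correction. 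In the second piece, bounding $|\partial_\nu w|\le C_4\varepsilon^{\alpha_4 l_0}$ and $\int_{T_1}^{T_2}\!\int_{\Gamma_f\cup\Gamma_b}|u_0|\,d\sigma\,dt$ by the same elementary integrals produces the $\varepsilon^{3+\alpha_4 l_0}$ term and its $\mathcal O(s\varepsilon^{4+\alpha_4 l_0})$ correction. Substituting $s=\varepsilon^{-\beta}$ with $\beta\le l$, $\beta<1$ as in \eqref{eq:beta} and collecting orders gives \eqref{eq:I7-3D}.

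The only step requiring real care is the power bookkeeping: unlike the transverse caps, $\Gamma_f$ and $\Gamma_b$ are longitudinal faces spanning the full length $\varepsilon^l$ of $D_\varepsilon$ in the $x_2$-direction, so the powers of $\varepsilon$ arise through a different split (an $\varepsilon^l$ from the $x_2$-integral, an $\varepsilon^2$ from the $x_3$-integral of $x_3\,e^{\mu^{-1/2}sd_3x_3}$, an extra $\varepsilon$ from the gap factor $e^{\mu^{-1/2}(sd_1+\Bi\sqrt{s^2+\lambda}d_2)\varepsilon}-1$, and a further $\varepsilon^2$ from the time integral), yet must be verified to land on the same exponents as for $I_1$ in Lemma~\ref{lemma:I1-3D}. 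A secondary technical point is controlling the $x_2$-dependent shift $\gamma(x_2)$ that appears inside the $x_3$-integration limits; as in \eqref{eq:esti}, it contributes only $\mathcal O(s\varepsilon^2)$ relative corrections, which are absorbed into the stated $\mathcal O(\cdot)$ terms.
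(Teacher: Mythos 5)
Your proposal follows essentially the same route as the paper's proof: expand $w$ and $\partial_\nu w$ on $\Gamma_f\cup\Gamma_b$ about a point of $\hat\Gamma$ using $w=0$, $\partial_\nu w=h$ and the $C^{1,\alpha_4}$ bound, let the common leading term $-h(\mathbf 0)x_3$ on the two parallel faces produce the gap factor $e^{\mu^{-1/2}(sd_1+\Bi\sqrt{s^2+\lambda}d_2)\varepsilon}-1=\mathcal O(s\varepsilon)$, and reduce everything to the elementary exponential integrals with the $\gamma$-shift controlled by \eqref{eq:gamm2}, exactly as in the paper's adaptation of Lemma~\ref{lemma:I1-3D}. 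Your bookkeeping (including using the $l_0$ exponents in the remainder bounds on these faces) is consistent with the stated estimate, so the proposal is correct and matches the paper's argument.
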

 \begin{proof}
On $\Gamma_b$, analogous to Lemma \ref{lemma:I1-3D}, we have
     \begin{equation*}
         \begin{aligned}
             w(\Bx,t) =& w(\mathbf{0},t_0) + (0,x_2,x_3) \cdot (\partial_1 w(\mathbf{0},t_0),\partial_2 w(\mathbf{0},t_0),\partial_3 w(\mathbf{0},t_0))\\ &+\partial_t w(\mathbf{0},t_0)) (t-t_0)+ \mathcal{R}_{w|{\Gamma}_{b}} \\
             =& - h(\mathbf{0},u,v)_{(\Bx,t)=(\mathbf{0},t_0)}x_3 + \mathcal{R}_{w|{\Gamma}_{b}},\\
             \partial_{\nu} w(\Bx,t) =&(-1,0,0)\cdot (\partial_1 w(\Bx,t),\partial_2 w(\Bx,t),\partial_3 w(\Bx,t)) = - \partial_1 w(\Bx,t) \\
             =&- \partial_1 w(\mathbf{0},t_0) + \mathcal{R}_{\partial w|{\Gamma}_{b}} =\mathcal{R}_{\partial w|{\Gamma}_{b}}, \\
             \partial_{\nu} u_0 =&- \mu^{-\frac{1}{2}} (sd_1 + \Bi \sqrt{s^2 + \lambda}d_2) e^{\lambda t} \cdot e^{\mu^{-\frac{1}{2}}(sd_2 - \Bi \sqrt{s^2 + \lambda}d_1)x_2 + \mu^{-\frac{1}{2}} s d_3x_3 },
         \end{aligned}
     \end{equation*}
where
\begin{align}
    |\mathcal{R}_{w|{\Gamma}_{b}}| &\leq C_4 (|(0,x_2,x_3)|+ |t-t_0|^{\frac{1}{2}})^{1+\alpha_4}\leq C_4 \varepsilon ^{1+\alpha_4},\label{eq:nb}\\
    |\mathcal{R}_{\partial w|{\Gamma}_{b}}| &\leq C_4 (|(0,x_2,x_3)|+ |t-t_0|^{\frac{1}{2}}) ^{\alpha_4} \leq C_4 \varepsilon ^{\alpha_4}.\label{eq:nb1}
\end{align}
Analogously, on $\Gamma_f$, the following expansion holds:
     \begin{equation*}
         \begin{aligned}
            w(\Bx,t) =& w(\mathbf{0},t_0) + (0,x_2,x_3) \cdot (\partial_1 w(\mathbf{0},t_0),\partial_2 w(\mathbf{0},t_0),\partial_3 w(\mathbf{0},t_0))\\ & + \partial_t w(\mathbf{0},t_0)) (t-t_0)+ \mathcal{R}_{w|{\Gamma}_{f}} \\
             =&  -h(\mathbf{0},u,v)_{(\Bx,t)=(\mathbf{0},t_0)}x_3 + \mathcal{R}_{w|{\Gamma}_{f}},\\
             \partial_{\nu} w(\Bx,t) =&(1,0,0)\cdot (\partial_1 w(\Bx,t),\partial_2 w(\Bx,t),\partial_3 w(\Bx,t)) =  \partial_1 w(\Bx,t) \\
             =& \partial_1 w(\mathbf{0},t_0) + \mathcal{R}_{\partial w|{\Gamma}_{f}} =\mathcal{R}_{\partial w|{\Gamma}_{f}}, \\
             \partial_{\nu} u_0 =&   \mu^{-\frac{1}{2}} (sd_1 + \Bi \sqrt{s^2 + \lambda}d_2) e^{\lambda t} e^{(sd_1 + \Bi \sqrt{s^2 + \lambda})\varepsilon} e^{ \mu^{-\frac{1}{2}} (sd_2 - \Bi \sqrt{s^2 + \lambda}d_1)x_2 + \mu^{-\frac{1}{2}} s d_3 x_3 },
         \end{aligned}
     \end{equation*}
where $\mathcal{R}_{w|{\Gamma}_{f}}$ and $\mathcal{R}_{\partial w|{\Gamma}_{f}}$ satisfy \eqref{eq:nb} and \eqref{eq:nb1}, respectively.
Therefore, we deduce that 
\begin{align*}
    &\left|\int_{T_1}^{T_2}\int_{\Gamma_f \cup \Gamma_b} w \partial_{\nu} u_0  \mathrm{d} \sigma \mathrm{d} t\right|\\
    &\leq \frac{\mu^{-\frac{1}{2}}\left|h(\mathbf{0}, u, v)_{(\Bx, t)=\left(\mathbf{0}, t_0\right)}\right| e^{\lambda T_1} s^2 \varepsilon^{l+4}}{2}+C_4 \mu^{-\frac{1}{2}} e^{\lambda T_1} s^2 \varepsilon^{3+l+\left(1+\alpha_4\right) l_0}+\mathcal{O}\left(s^3 \varepsilon^{4+2 l}\right),
\end{align*}
     and
     \begin{equation*}
             \left |\int_{T_1}^{T_2}\int_{\Gamma_f \cup \Gamma_b}  u_0 \partial_{\nu}w  \mathrm{d} \sigma \mathrm{d} t\right| 
             \leq C_4 e^{\lambda T_1} \varepsilon^{3+\alpha_4 l_0}+\mathcal{O}\left(s \varepsilon^{4+\alpha_4 l_0}\right).     
     \end{equation*}
     Thus, we establish \eqref{eq:I7-3D} from the preceding analysis.
 \end{proof}

Based on the analysis in Lemmas \ref{lemma:I2-3D} and \ref{lemma:I4-3D}, we establish the following lemma.
 \begin{Lemma}\label{lemma:I8-3D}
    The boundary term $I_8$, defined in \eqref{I7-8}, can be decomposed as:
\begin{equation*}
        I_8 = I_{81} + I_{82},
\end{equation*}
where
\begin{align*}
    I_{81}&:= (f_1^{\prime}-g_1^{\prime})_{(\Bx,t)=(\mathbf{0},t_0)}\left(\int_{T_1}^{T_2}\int_{\Gamma_f} u_0\mathrm{d} \sigma \mathrm{d} t - \int_{T_1}^{T_2}\int_{\Gamma_b} u_0\mathrm{d} \sigma \mathrm{d} t\right), \notag\\
     I_{82}&:= \left( \int_{T_1}^{T_2} \int_{\Gamma_f} \delta_2 (f_1^{\prime}-g_1^{\prime}) u_0\mathrm{d} \sigma \mathrm{d} t - \int_{T_1}^{T_2} \int_{\Gamma_b} \delta_2 (f_1^{\prime}-g_1^{\prime}) u_0\mathrm{d} \sigma \mathrm{d} t\right),
\end{align*}
with
\begin{equation*}
    \begin{aligned}
        I_{81}=&  \int_{T_1}^{T_2} \int_{\Gamma_b}  (e^{\mu^{-\frac{1}{2}}(sd_1 + \Bi \sqrt{s^2 + \lambda}d_2)\varepsilon} -1) e^{\lambda t} e^{\mu^{-\frac{1}{2}}(sd_2 - \Bi \sqrt{s^2 + \lambda}d_1)x_2 +\mu^{-\frac{1}{2}}sd_3 x_3} \mathrm{d} x_3 \mathrm{d}x_1 \mathrm{d} t\\
        &\times {(f_1^{\prime}-g_1^{\prime})_{(\Bx,t)=(\mathbf{0},t_0)}},\\
        |I_{82}| \leq & C(C_3,C_4) e^{\lambda T_1} \varepsilon^{3+\alpha_3 \alpha_4 l_0}+\mathcal{O}\left(s \varepsilon^{3+l+\alpha_3 \alpha_4 l_0}\right).
    \end{aligned}
\end{equation*}
For the term $I_4$ in \eqref{eq:I4-3}, we have
\begin{align*}
     |I_{45}|  \geq & \big||(f_3^{\prime} -g_3^{\prime})|_{(\Bx,t) = (\Bx_0, t_0)}sd_3\big|(1-e^{\mu^{-1/2}d_2}) e^{\lambda T_1} \varepsilon^{3+l}\\ &\times e^{\mu^{-\frac{1}{2}}sd_1 x_{1,\xi}}  e^{\mu^{-\frac{1}{2}}sd_3\gamma(x_{2,\xi})} +\mathcal{O}(s^2\varepsilon^{3+l+l_0}),\\
    |I_{41} - I_{81}| &\leq C(\mu,C_3) e^{\lambda T_1} s^2 \varepsilon^{4+l}+\mathcal{O}\left(s^3 \varepsilon^{4+l+l_0}\right).
\end{align*}
\end{Lemma}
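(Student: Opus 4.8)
The plan is to mirror the nozzle-domain arguments of Lemmas~\ref{lemma:I2-3D} and~\ref{lemma:I4-3D}; the only new ingredient is the pair of lateral faces $\Gamma_f$ and $\Gamma_b$ of $D_\varepsilon$, which in the slab geometry carry the flux component $f_1^{\prime}-g_1^{\prime}$ that, for a nozzle, entered only through the volume term $I_4$. For $I_8$ I would first use that the outward unit normal is $(-1,0,0)$ on $\Gamma_b=\{x_1=0\}$ and $(1,0,0)$ on $\Gamma_f=\{x_1=\varepsilon\}$, so that $I_8=\int_{T_1}^{T_2}\int_{\Gamma_f}(f_1^{\prime}-g_1^{\prime})u_0\,d\sigma\,dt-\int_{T_1}^{T_2}\int_{\Gamma_b}(f_1^{\prime}-g_1^{\prime})u_0\,d\sigma\,dt$. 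Applying the H\"older expansion of Lemma~\ref{lemma:exp} to $f_1^{\prime}-g_1^{\prime}$ about $(\mathbf{0},t_0)$ separates the constant part $I_{81}$ from the remainder part $I_{82}$, exactly as in \eqref{eq:I2-3D}; since $\Gamma_f$ and $\Gamma_b$ carry the same $(x_2,x_3)$-parametrization and $u_0$ depends on $x_1$ only through $e^{\mu^{-1/2}(sd_1+\Bi\sqrt{s^2+\lambda}d_2)x_1}$, the difference of the two integrals pulls out the single factor $e^{\mu^{-1/2}(sd_1+\Bi\sqrt{s^2+\lambda}d_2)\varepsilon}-1$, which is precisely the stated form of $I_{81}$. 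For $I_{82}$ I would bound $|\delta_2(f_1^{\prime}-g_1^{\prime})|\le C(C_3,C_4)\varepsilon^{\alpha_3\alpha_4 l_0}$ using Lemma~\ref{lemma:exp} and $\mathrm{diam}(D_\varepsilon)=\mathcal{O}(\varepsilon^{l_0})$, and then estimate $\int_{\Gamma_f\cup\Gamma_b}|u_0|$ by direct integration of the resulting real exponentials (using $d_1,d_2,d_3<0$ and $s=\varepsilon^{-\beta}$ with $\beta\le l$, $\beta<1$ from \eqref{eq:beta}); the $t$-integration over $[T_1,T_2]$, the $x_2$-integration over $(0,\varepsilon^l)$, and the $x_3$-integration over $(\gamma(x_2),\gamma(x_2)+\varepsilon)$ then produce $|I_{82}|\le C(C_3,C_4)e^{\lambda T_1}\varepsilon^{3+\alpha_3\alpha_4 l_0}+\mathcal{O}(s\varepsilon^{3+l+\alpha_3\alpha_4 l_0})$, in complete analogy with \eqref{eq:I22-3D}.

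For the lower bound on $|I_{45}|$ I would exploit that, by \eqref{cond:d-3D}, the perpendicular unit vector $\mathbf{d}^{\perp}=(d_2,-d_1,0)$ has vanishing third component, so that $u_0$ restricted to $D_\varepsilon$ is a pure real exponential in $x_3$. Writing $I_{45}=(f_3^{\prime}-g_3^{\prime})|_{(\Bx_0,t_0)}\,\mu^{-1/2}sd_3\int_{T_1}^{T_2}\int_{D_\varepsilon}u_0$, everything reduces to a lower bound for $|\int_{T_1}^{T_2}\int_{D_\varepsilon}u_0|$. I would factor this integral into its $t$-, $x_1$-, $x_2$- and $x_3$-parts (after the substitution $x_3\mapsto x_3-\gamma(x_2)$, which extracts the scalar $e^{\mu^{-1/2}sd_3\gamma(x_2)}$), pass to the real part, and apply the mean value theorem to the $x_1$- and $x_2$-integrals to produce evaluation points $x_{1,\xi}\in E$, $x_{2,\xi}\in(0,\varepsilon^l)$ together with cosine factors whose arguments are $\mathcal{O}(s\varepsilon^{l_0})$ and can be kept bounded below exactly as in \eqref{eq:ge1}; combining this with $d_1,d_2,d_3<0$ and the elementary bounds of type \eqref{eq:sel} yields $|\int_{T_1}^{T_2}\int_{D_\varepsilon}u_0|\ge C(1-e^{\mu^{-1/2}d_2})e^{\lambda T_1}e^{\mu^{-1/2}sd_1 x_{1,\xi}}e^{\mu^{-1/2}sd_3\gamma(x_{2,\xi})}\varepsilon^{3+l}+\mathcal{O}(s\varepsilon^{3+l+l_0})$, and multiplying by $\mu^{-1/2}|sd_3|\,|(f_3^{\prime}-g_3^{\prime})|_{(\Bx_0,t_0)}|$ gives the claimed estimate.

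Finally, for the comparison $|I_{41}-I_{81}|$ I would integrate the $x_1$-variable out of the volume integral defining $I_{41}$: since $\int_0^{\varepsilon}e^{\mu^{-1/2}(sd_1+\Bi\sqrt{s^2+\lambda}d_2)x_1}\,dx_1=(e^{\mu^{-1/2}(sd_1+\Bi\sqrt{s^2+\lambda}d_2)\varepsilon}-1)/(\mu^{-1/2}(sd_1+\Bi\sqrt{s^2+\lambda}d_2))$, the prefactor $\mu^{-1/2}(sd_1+\Bi\sqrt{s^2+\lambda}d_2)$ in $I_{41}$ cancels and $I_{41}$ reduces to the same $(x_2,x_3,t)$-integral that appears in $I_{81}$; the residual discrepancy comes only from the curve profile $\gamma$ entering the $x_3$-range and is controlled by $|\gamma|=\mathcal{O}(\varepsilon)$ together with $|e^{\mu^{-1/2}sd_3\gamma(x_2)}-e^{\mu^{-1/2}sd_3\gamma(\varepsilon^l)}|\le\mu^{-1/2}s\varepsilon+\mathcal{O}(s^2\varepsilon^2)$, exactly as in the derivation of $|I_{43}-I_{21}|$ in Lemma~\ref{lemma:I4-3D}; carrying the exponential integrals through then gives $|I_{41}-I_{81}|\le C(\mu,C_3)e^{\lambda T_1}s^2\varepsilon^{4+l}+\mathcal{O}(s^3\varepsilon^{4+l+l_0})$, while the estimates for the remaining terms $I_{42},I_{44},I_{46}$ of $I_4$ (and for $I_7$) are identical to their nozzle counterparts. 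I expect the genuine obstacle to be the lower bound for $|I_{45}|$: one must guarantee that the trigonometric factors produced by the mean value theorem do not collapse, i.e. that the $f_3^{\prime}$-contribution really dominates all the higher-order terms $I_1,I_7,I_{22},I_{82},I_3,I_{42},I_{44},I_{46},I_5,I_6$; this is exactly what pins down the admissible range \eqref{eq:beta} for $\beta$ and makes essential the choice \eqref{cond:d-3D} of $\mathbf{d}^{\perp}$, which removes the $x_3$-oscillation.
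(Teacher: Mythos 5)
Your overall route is the one the paper itself intends: the paper gives no separate proof of this lemma, asserting only that it follows from the analysis of Lemmas \ref{lemma:I2-3D} and \ref{lemma:I4-3D}, and your decomposition of $I_8$ via the normals on $\Gamma_b$ and $\Gamma_f$, the extraction of the factor $e^{\mu^{-1/2}(sd_1+\Bi\sqrt{s^2+\lambda}d_2)\varepsilon}-1$ for $I_{81}$, and the lower bound for $|I_{45}|$ by factoring $\int u_0$ and applying the mean value theorem as in \eqref{eq:ge1} are exactly those arguments transplanted to the slab faces. Two points in your sketch, however, do not survive scrutiny.

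First, your explanation of $|I_{41}-I_{81}|$ invokes a mismatch ``from the curve profile $\gamma$ entering the $x_3$-range,'' controlled by $|e^{\mu^{-1/2}sd_3\gamma(x_2)}-e^{\mu^{-1/2}sd_3\gamma(\varepsilon^l)}|$ as in $|I_{43}-I_{21}|$. That mechanism is specific to the nozzle comparison, where the face $\Omega_\varepsilon'$ sits at the fixed slice $x_2=\varepsilon^l$ while the volume's $x_3$-window slides with $\gamma(x_2)$. Here the faces are $\Gamma_f,\Gamma_b$ at fixed $x_1$, and their $x_3$-window $(\gamma(x_2),\gamma(x_2)+\varepsilon)$ slides with $x_2$ exactly as in $D_\varepsilon$; hence after integrating out $x_1\in(0,\varepsilon)$ the volume integral in $I_{41}$ reproduces the difference of the two face integrals identically, and the only possible residue is the H\"older increment of $f_1^{\prime}-g_1^{\prime}$ between the base point $(\Bx_0,t_0)$ used in \eqref{I-4-3D} and the point $(\mathbf{0},t_0)$ used in $I_{81}$. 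The claimed bound therefore still holds (the difference is in fact smaller than you assert), but the reason you give is wrong; you should either expand $f_1^{\prime}-g_1^{\prime}$ at $(\mathbf{0},t_0)$ in the slab version of $I_{41}$, which makes $I_{41}-I_{81}$ vanish, or control that H\"older increment explicitly.

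Second, for $I_{82}$ the phrase ``in complete analogy with \eqref{eq:I22-3D}'' hides that the faces have changed shape: $\Gamma_f,\Gamma_b$ have transverse extents $\varepsilon^l$ in $x_2$ and $\varepsilon$ in $x_3$, whereas the faces in \eqref{eq:I22-3D} have extents $\varepsilon$ and $\varepsilon$. Carrying out the direct integration you describe, with \eqref{eq:sel}-type bounds, the $x_2$-integral contributes at most $\min\{\varepsilon^l,\,Cs^{-1}\}=\mathcal{O}(\varepsilon^{l})$ (since $s^{-1}=\varepsilon^{\beta}$ with $\beta\le l$), so the leading term is of order $\varepsilon^{2+l+\alpha_3\alpha_4 l_0}$, which coincides with the stated $\varepsilon^{3+\alpha_3\alpha_4 l_0}$ only when $l\ge 1$. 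As written, your computation does not deliver the claimed exponent for $l<1$; you need either to explain how the stated bound is recovered or to record the bound as $\varepsilon^{2+l+\alpha_3\alpha_4 l_0}$ and re-check that this weaker estimate is still absorbed when it is compared against the lower bound for $|I_{45}|$, of order $s\varepsilon^{3+l}$, in the final argument.
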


\begin{proof}[\textbf{Proof of Theorem \ref{main:thm} in slab domains}]
We begin by proving part (a) of Theorem \ref{main:thm}. Following an argument analogous to that of Lemma \ref{lemma:I1-6}, we establish the identity
\begin{equation*}
I_1 + I_2 +I_7 +I_8 = I_3 + I_4 - I_5 - I_6,
\end{equation*}
where the terms $I_j$ $(j = 1, \ldots, 8)$ are defined in \eqref{I1-6-3D} and \eqref{I7-8}. Incorporating the analysis for nozzle domains with the results of Lemmas \ref{lemma:I7-3D} and \ref{lemma:I8-3D}, we derive the estimate
\begin{equation*}
|I_{45}| \leq |I_1| + |I_{22}| + |I_{3}| + |I_{42}| + |I_{44}| + |I_{46}| + |I_{43} - I_{21}| + |I_{5}| +|I_6| + |I_{7}| + |I_{41} - I_{81}|+|I_{82}|,
\end{equation*}
where the estimates $I_1$, $I_{22}$, $I_3$, $I_5$, $I_6$, $I_7$ are defined in \eqref{I1-te}, \eqref{eq:I22-TE}, \eqref{eq:I3-TE}, \eqref{eq:I5-TE}, \eqref{eq:I6-TE}, \eqref{eq:I7-3D} respectively, and the remaining estimates are provided in Lemmas \ref{lemma:I4-3D} and \ref{lemma:I8-3D}. Building upon this analysis and following an approach similar to that in \eqref{thm:pt3}, we deduce
\begin{equation*}
|{f_3^{1\prime}(\Bx_0,t_0,u^1(\Bx_0,t_0)) -f_3^{2\prime}(\Bx_0,t_0,u^2(\Bx_0,t_0))}| \leq C(\mu,C_1,C_2,C_3,C_4) \varepsilon^{\tau },
\end{equation*}
where $\tau$ in \eqref{eq:tau} agrees with the two-dimensional case. Following an analogous procedure to the two-dimensional analysis in \eqref{eq:op FG} yields the corresponding estimate for the flux terms:
\begin{equation*}
|{\mathscr{F}_3^{1\prime}(u^j)(\Bx_0,t_0) -\mathscr{F}_3^{2\prime}(u^j)(\Bx_0,t_0)}| \leq C(\mu,C_1,C_2,C_3,C_4) \varepsilon^{\tau },
\end{equation*}
where $j=1, 2$.

The argument for part (b) with slab domains in Theorem \ref{main:thm} parallels that of the two-dimensional case in \eqref{eq:fg 2D} and is therefore omitted.
\end{proof}

\section{Application to Reaction-Diffusion-Convection Systems}\label{sec:application}
To illustrate the broad applicability of our framework, as discussed in Section~\ref{sec:intro}, we turn to a central model in mathematical physics: reaction-diffusion-convection systems. These systems describe the spatiotemporal evolution of scalar fields under coupled transport mechanisms, with applications ranging from microfluidic chemical reactors to ecological population dynamics. The fundamental model takes the form:
\begin{equation}\label{eq:rdc}
    \partial_t u + \nabla \cdot (\boldsymbol{c} u) = \mu \Delta u + R(u, \nabla u) \quad \text{in } \Omega \times [0,T],
\end{equation}
where $u(\Bx,t)$ represents concentration fields such as chemical species or biological populations, $\boldsymbol{c}=(c_k)_{k=1}^3$ denotes an incompressible velocity field, $\mu$ is the diffusion coefficient, and $R(u,\nabla u)$ encodes nonlinear reactions or gradient-dependent processes like chemotaxis. Here, $\Omega$ is a product-type main defined in \eqref{eq:NNN}.

 Within our operator-theoretic framework, the system \eqref{eq:rdc} corresponds to a specific realization of the general balance law \eqref{eq:bl}. This correspondence is established through the identifications:
 \begin{align*}
     H(u) &= u, &
     \BF(\Bx, t, u) &= \boldsymbol{c} u, &
     f(\Bx, t, u, \nabla u, \Delta u) &= R(u, \nabla u)+ \mu \Delta u.
 \end{align*}
This reformulation allows us to interpret the flux $\BF$ and the reaction term $R$ as realizations of the abstract operators $\mathscr{F}$ and $\mathfrak{f}$ introduced in Section \ref{sub:math}. Note that the results of Theorem \ref{main:thm} imply that the physical parameters in system \eqref{eq:rdc} are uniquely identifiable from the boundary data $\mathcal{M}|_\Sigma$, achieving an accuracy on the order of $\mathcal{O}(\varepsilon^{\tau})$ and $\mathcal{O}(\varepsilon^{\tau_1})$ as specified in the following theorem. The proof follows directly from Theorem~\ref{main:thm} and is omitted for brevity.

\begin{thm}
Let $(u^1,\boldsymbol{c}^1, R^1)$ and $(u^2,\boldsymbol{c}^2, R^2)$ in the admissible class $\mathcal{A}$ be two sets of parameters for the reaction-diffusion-convection system \eqref{eq:rdc} that yield solutions with identical passive boundary measurements on $\Sigma$, i.e., $\mathcal{M}_{\boldsymbol{c}^1,R^1}|_\Sigma = \mathcal{M}_{\boldsymbol{c}^2,R^2}|_\Sigma$. Assume $ R^j, \boldsymbol{c}^j, {u}^j$ satisfying \eqref{Cond:fg0}, \eqref{Cond:FG} and \eqref{Cond:uv}, respectively. Then, we obtain the following estimate for the fluxes:
 \begin{equation}\label{eq:flux_est}
\|\widetilde{c}^1 {u}^1  - \widetilde{c}^2{u}^2  \|_{L^\infty(\Omega)} \leq C(\varepsilon_0, C_2, C_4) \varepsilon^\tau,
 \end{equation}
 where the specific forms of $\widetilde{c}^j$ $(j=1,2)$ are given in Remark \ref{rmk:tilde-fg-forms}, the regularity parameter $\tau > 0$ is as in \eqref{eq:ta}, and $C(\varepsilon_0, C_2, C_4) > 0$ depends only on the parameters $\varepsilon_0$, $C_2$, $C_4$ from Theorem \ref{main:thm}.

 Furthermore, in the scenario where the fluxes are \textit{a priori} known to be identical, i.e., $ \boldsymbol{c}^1 {u}^1 = \boldsymbol{c}^2 {u}^2$,  we obtain the following estimate for the  reaction term:
 \begin{equation}\label{eq:react_est}
 \| R^1(u^1, \nabla u^1) - R^2(u^2, \nabla u^2) \|_{L^\infty(\Omega)} \leq \widetilde{C}(\varepsilon_0, C_2,  C_4) \varepsilon^{\tau_1},
 \end{equation}
 where the regularity parameter $\tau_1 $  and positive constant $\widetilde{C}(\varepsilon_0, C_2, C_4)$ are introduced in Theorem \ref{main:thm}.
 \end{thm}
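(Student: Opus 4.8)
The plan is to recognize the reaction–diffusion–convection system \eqref{eq:rdc} as a concrete instance of the general balance law \eqref{eq:main-mu} and then invoke Theorem~\ref{main:thm} essentially verbatim; the only genuine work is (i) to check that the specialization lies in the admissible class $\mathcal{A}$ and (ii) to upgrade the ``same-solution'' bounds \eqref{Re:FG} and \eqref{Re:fg} to the ``cross-solution'' statements \eqref{eq:flux_est} and \eqref{eq:react_est}. First I would adopt the identifications preceding the statement, $H^j(u):=u$, $\BF^j(\Bx,t,u):=\boldsymbol{c}^j(\Bx,t)\,u$ and $f^j(\Bx,t,u,\nabla u,\Delta u):=R^j(u,\nabla u)+\mu\Delta u$, so that \eqref{eq:rdc} becomes \eqref{eq:main-mu} with the prescribed $\mu$ (the term $\mu\Delta u$ separated out), and $\mathscr{F}^j(u^j)=\boldsymbol{c}^j u^j$, $\mathfrak{f}^j(u^j)=R^j(u^j,\nabla u^j)+\mu\Delta u^j$ are the realizations in the sense of \eqref{eq:oper}. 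Since $H^j$ is linear it belongs to $C^{1,\alpha_1}(\CC)$ for every $\alpha_1\in(0,1)$ with $C^{1,\alpha_1}$-norm bounded by an absolute constant, so both \eqref{Cond:H} and \eqref{Cond:Hb} hold; \eqref{Cond:fg0} for $R^j$ and \eqref{Cond:uv} for $u^j$ are assumed outright; and although the affine flux $\BF^j(\Bx,t,z)=\boldsymbol{c}^j(\Bx,t)z$ is not globally Hölder in the unbounded variable $z$, the realization $\mathscr{F}^j$ is by \eqref{Cond:uv} only ever evaluated at the a priori bounded state $u^j$, so \eqref{Cond:FG} for the components $c^j_k u$ over $\overline{Q}$ and the bounded range of the solutions follows from the assumed Hölder regularity of $\boldsymbol{c}^j$. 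Hence $(H^j,\BF^j,f^j)\in\mathcal{A}$ with $\varepsilon$-independent constants.

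Next I would note that $\mathcal{M}_{\boldsymbol{c}^1,R^1}|_\Sigma=\mathcal{M}_{\boldsymbol{c}^2,R^2}|_\Sigma$ is literally $\mathcal{M}_{\mathscr{F}^1,\mathfrak{f}^1}|_\Sigma=\mathcal{M}_{\mathscr{F}^2,\mathfrak{f}^2}|_\Sigma$, so the flux balance condition \eqref{eq:fb} holds and Theorem~\ref{main:thm}$(a)$ applies, giving $|\tilde{\mathscr{F}^1}(u^j)(\Bx,t)-\tilde{\mathscr{F}^2}(u^j)(\Bx,t)|\le C\varepsilon^{\tau}$ for $j=1,2$ and all $(\Bx,t)\in Q$. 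Because $u$ is scalar, the coordinate rotations in Remark~\ref{rmk:tilde-fg-forms} act only on the vector $\boldsymbol{c}^j$, so $\tilde{\mathscr{F}^j}(u^j)=\widetilde{c}^j u^j$ with $\widetilde{c}^j$ the corresponding linear combination of the components of $\boldsymbol{c}^j$. Writing $\widetilde{c}^1u^1-\widetilde{c}^2u^2=(\widetilde{c}^1-\widetilde{c}^2)u^1+\widetilde{c}^2(u^1-u^2)$, bounding the first summand by the estimate above and the second by $\|\widetilde{c}^2\|_{L^\infty}\|u^1-u^2\|_{L^\infty}\le C\varepsilon^{l_0}$ from \eqref{eq:minus}, and using $\tau\le l_0$ (which is checked directly from \eqref{eq:l0} and \eqref{eq:ta}), yields \eqref{eq:flux_est}.

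For the second assertion, the extra hypothesis $\boldsymbol{c}^1u^1=\boldsymbol{c}^2u^2$ on $Q$ gives $\mathscr{F}^1(u^1)=\mathscr{F}^2(u^2)$, and since the boundary coupling term $h_{\BF^j}$ is built from the boundary trace of the flux $\boldsymbol{c}^j u^j$ it forces $h_{\BF^1}=h_{\BF^2}$; together with $H^j\in C^{1,\alpha_1}$ and the fact that the product-type geometry of $\Omega$ supplies, for each interior point, a lateral boundary point whose outward normal is collinear with the connecting segment, all hypotheses of Theorem~\ref{main:thm}$(b)$ are in place, so $|\mathfrak{f}^1(u^j)(\Bx,t)-\mathfrak{f}^2(u^j)(\Bx,t)|\le C'\varepsilon^{\tau_1}$. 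Since $\mathfrak{f}^j(u^j)=R^j(u^j,\nabla u^j)+\mu\Delta u^j$ and the Laplacian terms cancel when evaluated at the same $u^j$, this reads $|R^1(u^j,\nabla u^j)-R^2(u^j,\nabla u^j)|\le C'\varepsilon^{\tau_1}$. Finally, under the geometric assumption of part $(b)$ the difference $w=u^1-u^2$ satisfies $\|w\|_{C^1(\overline{Q})}\le C\varepsilon^{\alpha_4}$ (the argument behind \eqref{eq:wb}, where $w$ and its full gradient vanish at the chosen boundary point), so the $\alpha_2$-Hölder continuity of $R^2$ in its arguments gives $\|R^2(u^1,\nabla u^1)-R^2(u^2,\nabla u^2)\|_{L^\infty}\le C\varepsilon^{\alpha_2\alpha_4}$, and since $\varepsilon^{\alpha_2\alpha_4}\le\varepsilon^{\tau_1}$ for small $\varepsilon$ the triangle inequality delivers \eqref{eq:react_est}.

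The logical skeleton is a direct specialization, so the main obstacle is confined to two bookkeeping points. The first is confirming admissibility of the affine flux $\boldsymbol{c}^j z$, which is not Hölder in the unbounded argument $z$ and must be handled by restricting attention to the a priori bounded range of the solutions via \eqref{Cond:uv}. The second, more substantive one, is the passage from the same-solution bounds produced by Theorem~\ref{main:thm} to the cross-solution quantities appearing in \eqref{eq:flux_est}--\eqref{eq:react_est}; this rests on the quantitative interior closeness $\|u^1-u^2\|_{L^\infty}\lesssim\varepsilon^{l_0}$ and $\|u^1-u^2\|_{C^1}\lesssim\varepsilon^{\alpha_4}$, together with the exponent comparisons $\tau\le l_0$ and $\tau_1<\alpha_2\alpha_4$, all of which are elementary consequences of the constructions already established in Sections~\ref{sec:proof-2D} and \ref{sec:proof-3D}.
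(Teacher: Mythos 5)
Your proposal is correct and takes the same route as the paper, which simply states that the result follows directly from Theorem~\ref{main:thm} under the identifications $H(u)=u$, $\BF=\boldsymbol{c}u$, $f=R+\mu\Delta u$ and omits the details. The two points you elaborate --- verifying admissibility of the affine flux on the bounded range of the solutions guaranteed by \eqref{Cond:uv}, and converting the same-solution bounds of Theorem~\ref{main:thm} into the cross-solution estimates \eqref{eq:flux_est}--\eqref{eq:react_est} via \eqref{eq:minus}, \eqref{eq:wb} and the exponent comparisons $\tau\le l_0$, $\tau_1\le\alpha_2\alpha_4$ --- are exactly the bookkeeping already carried out inside the paper's proof of Theorem~\ref{main:thm} (cf.\ \eqref{eq:result-f}--\eqref{eq:gminus} and \eqref{eq:fg 2D}), so your argument faithfully supplies the omitted proof.
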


    
 The regularity parameters $\tau$ and $\tau_1$ in our analysis are not merely mathematical abstractions; they embody the interplay between the system's geometry and the underlying physical laws, quantifying how geometric confinement at the scale $\varepsilon \ll 1$ governs the information transfer from internal dynamics to boundary measurements. This framework reveals that even systems producing identical boundary data can only sustain internal flux and source variations of order $\mathcal{O}(\varepsilon^\tau)$ and $\mathcal{O}(\varepsilon^{\tau_1})$ as given in \eqref{eq:flux_est} and \eqref{eq:react_est}, respectively. This theoretical scaling, conceptually aligned with experimental observations in microfluidics \cite{GHB2018,ISKW}, provides a quantitative principle for experimental design-guiding, for instance, the choice of system size to achieve a desired sensing resolution or the optimal placement of sensors in regions of high dynamic activity to maximize information content.

\vspace{0.4cm}
\noindent\textbf{Acknowledgment.} 
 The work of H. Liu is supported by the Hong Kong RGC General Research Funds (projects 11311122, 11300821, and 11303125), the NSFC/RGC Joint Research Fund (project  N\_CityU101/21), the France-Hong Kong ANR/RGC Joint Research Grant, A-CityU203/19. The work of Q. Meng is fully supported by a fellowship award from the Research Grants Council of the Hong Kong Special Administrative Region, China (Project No. CityU PDFS2324-1S09).

\renewcommand\refname{References}

\end{document}